% !TeX spellcheck = en_GB
\documentclass[11pt]{article}
\usepackage{graphicx}
\usepackage{amsmath}
\usepackage{amssymb}
\usepackage{amsfonts}
\usepackage{amsthm}
\usepackage{mathtools}
\usepackage{xcolor}
\usepackage{enumitem}
\fontsize{10pt}{14pt}
\usepackage{comment}
\usepackage{hyperref}
\newcommand{\R}{{\mathbb R}}

\newcommand{\weakly}{\rightharpoonup}

\newcommand{\eps}{\varepsilon}

\renewcommand{\ge }{\geqslant}
\renewcommand{\geq }{\geqslant}
\renewcommand{\le }{\leqslant}
\renewcommand{\leq }{\leqslant}
\def\neweq#1{\begin{equation}\label{#1}}
\def\endeq{\end{equation}}

\newtheorem{theorem}{Theorem}[section]
\newtheorem{proposition}[theorem]{Proposition}
\newtheorem{lemma}[theorem]{Lemma}
\newtheorem{corollary}[theorem]{Corollary}

\newtheorem{remark}[theorem]{Remark}
\newtheorem{assumption}[theorem]{Assumption}
\newtheorem{definition}[theorem]{Definition}

\textheight237mm \textwidth172mm \topmargin-10mm \hoffset-19mm
\parindent10pt
%\title{A fish-bone model for suspension bridges with rigid hangers\\ under a wind flow given by the piston-theoretic approximation \\[.2cm] }
\title{Analysis of a nonlinear fish-bone model for suspension bridges \\ with rigid hangers in the presence of flow effects}
\author{Alessio FALOCCHI$^\dag$ - Justin T. WEBSTER$^\ddag$\\
	\footnotesize{$^\dag$ Dipartimento di Matematica, Dipartimento di Eccellenza 2023-2027 - Politecnico di Milano, Italy}\vspace{-3mm} \\\footnotesize{$^\ddag$	University of Maryland, Baltimore County, Baltimore, MD, USA}}
\date{}
\begin{document}
	\maketitle
\begin{abstract}
\noindent We consider a dynamic system of nonlinear partial differential equations modeling the motions of a suspension bridge. This fish-bone model captures the flexural displacements of the bridge deck's mid-line, and each chordal filament's rotation angle from the centerline. These two dynamics are strongly coupled through the effect of  cable-hanger, appearing through a sublinear function.  Additionally, a structural nonlinearity of Woinowsky-Krieger type is included, allowing for large displacements. Well-posedness of weak solutions is shown and long-time dynamics are studied. In particular, to force the dynamics, we invoke a non-conservative potential flow approximation which, although greatly simplified from the full multi-physics fluid-structure interaction, provides a driver for non-trivial end behaviors. We describe the conditions under which the dynamics are uniformly stable, as well as demonstrate the existence of a compact global attractor under all nonlinear and non-conservative effects. To do so, we invoke the  theory of quasi-stability, first explicitly constructing an absorbing ball via stability estimates and, subsequently, demonstrating a stabilizability estimate on trajectory differences applied to the aforesaid absorbing ball. Finally,  numerical simulations are performed to examine the possible end behaviors of the dynamics. 
	
\end{abstract}

{\small
	\textbf{Keywords:} suspension bridges, fish-bone model, global attractor, quasi-stability theory, piston theory.
	\smallskip
	\par
	\textbf{AMS 2010 Subject Classification:} 35B41, 35G31, 35Q74, 74K20, 74H40, 70J10
	
}

\section{Introduction}
In this paper we aim to model, analyze, and simulate the dynamics of a suspension bridge deck with cables and hangers, under the effect of some aerodynamical loading. We are particularly interested in the well-posedness and long-time behavior---stability and the existence of a compact global attractor---for the associated non-gradient dynamics \cite{holawe}. In line with previous work on suspension bridges (and/or associated plate models), we will consider nonlinear models with dissipative effects, geometric and structural nonlinearities, as well as non-conservative lower order PDE terms. We are motivated by several previous works which considered suspension bridge models of different types \cite{Berkovits,bongazmor,crfaga,fa1,marchionna,mckenna,Plaut}. The model presented here is termed \textit{fish-bone} in \cite{berchio}, and a linear version of it was mentioned in \cite{yakubovich}; for a full overview on mathematical models for bridges see \cite{Gazz}. 

The central reference for the analytical program here is the recent \cite{bongazlasweb}. In that paper, a simplified nonlinear plate model captures the deck dynamics under the influence of a crude approximation of potential flow across the bridge deck surface. Well-posedness, stability and the existence of attractors are discussed there. In the present work, we utilize a more involved and refined fish-bone model of recent interest for the suspension bridge dynamics. We do so by introducing both cable-hanger nonlinearities, and a large displacement nonlinearity. The former is sublinear, nonlocal and algebraically complex, and the latter is superlinear and nonlocal.
Concerning the modeling of the cable-hanger, different approaches have been presented in the literature; in \cite{fa1,marchionna,mckenna} the cables are assumed to be fixed,  while the hangers can slacken according to some nonlinear law. On the other hand, in \cite{falocchi1}, the cables are movable, while the hangers are inextensible. This last approach is quite common in the engineering literature. We mention the Austrian engineer, Josef Melan \cite{melan}, who in 1888 introduced the so called Melan equation; in particular, he considered the bridge as a combination of a string (the cable) and a beam (the deck) linked through some rigid hangers, assumed to be uniformly distributed along the central span. 
In \cite{crfaga}, a more realistic model for suspension bridges with both cables and extensible hangers is proposed, but this produces a model of high complexity, which, in the corresponding system (of differential inclusions, not equations), is exceptionally difficult to mathematically treat.

In this paper the modeling of cable-hanger nonlinearity is strictly related to the Melan equation, with some adaptations due to the presence of two cables and a fish-bone deck. We also include in the analysis of deck prestressing effects through the Woinowsky-Krieger nonlinearity \cite{woi} and the aerodynamical effects via first order piston theory \cite{ashley,lamorte2,lighthill,Mchugh}---see  details in Subsection \ref{modeling}. 
 We must adapt the aerodynamic approximation utilized in \cite{bongazlasweb} to the variables associated to the fish-bone dynamics, resulting in a new manifestation  of the aerodynamic loading. The combination of these nonlinear terms behave well at the level of individual trajectories, but require careful analysis at the level of trajectory differences, a common feature for large deflection plate and beam models \cite{chla,holawe}. Such analysis is required for uniqueness considerations, as well as the invocation of the so called quasi-stability theory \cite{chueshov,chla} in the long-time behavior analysis, which ultimately yields our main results.

\subsection{Modeling Discussion}\label{modeling}
In this treatment, we consider a model for a suspension bridge with two degrees of freedom, given by the downward (transverse) vertical displacement $w:=w(x,t)$ and the torsional rotation of the barycentric line of the deck $\theta:=\theta(x,t)$; this provides the fish-bone model in Figure \ref{fishbone}. A generic cross section of the bridge is represented in Figure \ref{fishbone} (on the right); the circles are the sections of the two main cables, the vertical elements are the hangers, and the deck is filled in with black. Nonlinearity is introduced into the model through the effects of the cable-hanger and the prestressing of the deck, as well as for the possibility of large deflections. The dynamics are driven by an external force, as well as through several solution-dependent terms which provide a crude approximation of aerodynamic loading. As we are mainly interested in the qualitative behavior of the system, throughout the paper (except for Section \ref{num}) we take the length of the main span equal to $L=\pi$. In Section \ref{num} we provide some results on a physical model using proper $L>0$ and real physical quantities. 

We now state the partial differential equation (PDE) model we aim to analyze, and describe all of the terms involved. Let $T>0$ (including possibly $T=+\infty$), $I:=(0,\pi)$, $I_T:=I\times (0,T)$, then consider 
\begin{equation}
	\left\{\begin{array}{ll}
		w_{tt}+{(\overbrace{\delta+\beta}^{:=\mu})}\, w_t+w_{xxxx}+\big(P-S\int_Iw_{x}^2\big)w_{xx}+\big(f(w,\theta)\big)_x=g-\beta \Upsilon\theta_t-\eta\theta &\text{in }I_T\\
		\frac{\ell^2}{3}\theta_{tt}+\zeta\, \theta_t +\epsilon\theta_{xxxx}-\kappa\theta_{xx}+\big(\overline f(w,\theta)\big)_x=0\,& \text{in }I_T\\
		w=w_{xx}=\theta=\theta_{xx}=0 &\hspace{-20mm}\text{on }\{0,\pi\}\times(0,T)\\
		w(x,0)=w_0(x),\quad\,\,\theta(x,0)=\theta_0(x) &\text{on }\overline I\\
		w_t(x,0)=w_1(x),\quad\theta_t(x,0)=\theta_1(x) &\text{on }\overline I.
	\end{array}\right.
	\label{eq_sist1}
\end{equation}
We denote by $\ell>0$ the semi-width of the deck, we suppose the mass linear density constant and we take  $\delta, \zeta \ge 0$ as structural damping parameters. 
The presence of the coefficient $\ell^2/3$ in the $\theta$ equation \eqref{eq_sist1}$_2$ is reminiscent of the rotational kinetic energy, where the inertial moment is invoked.

\begin{figure}[h]
	\centering
	\includegraphics[width=10cm]{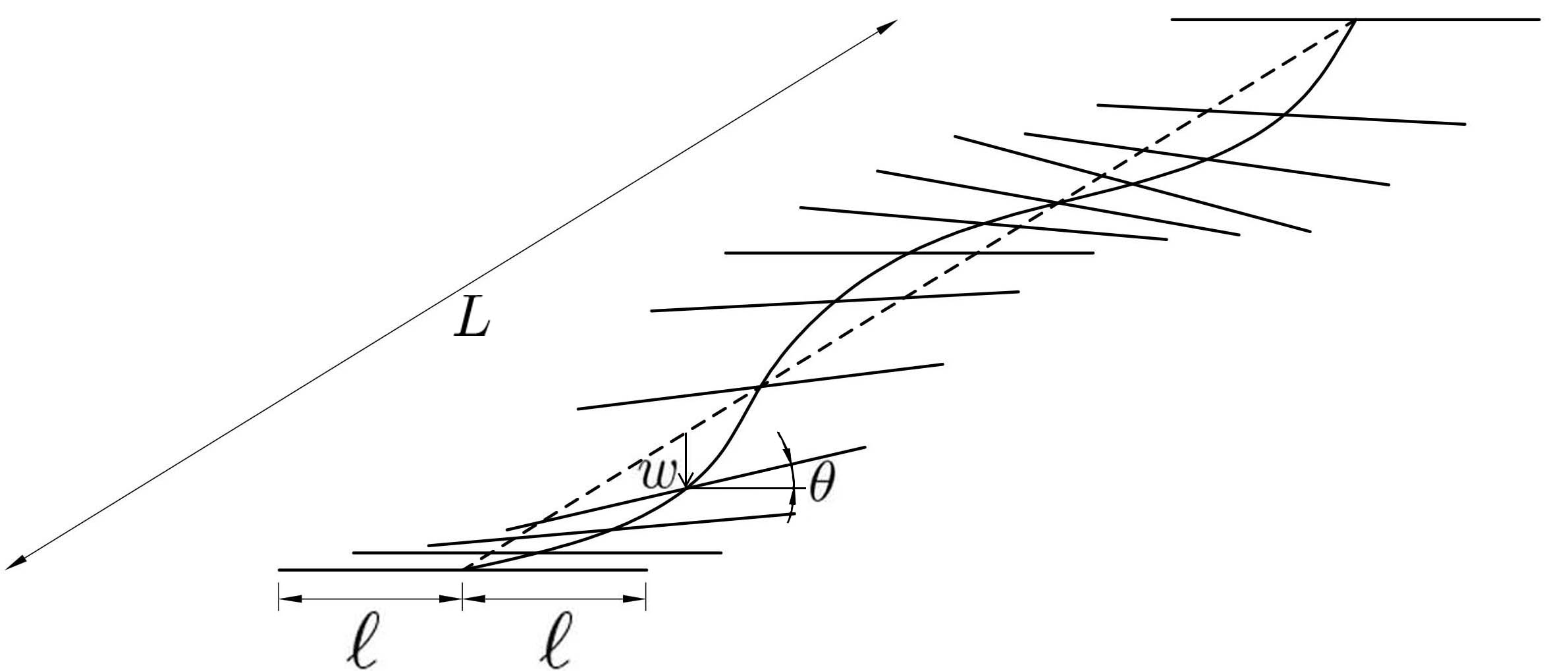}	\includegraphics[width=6cm]{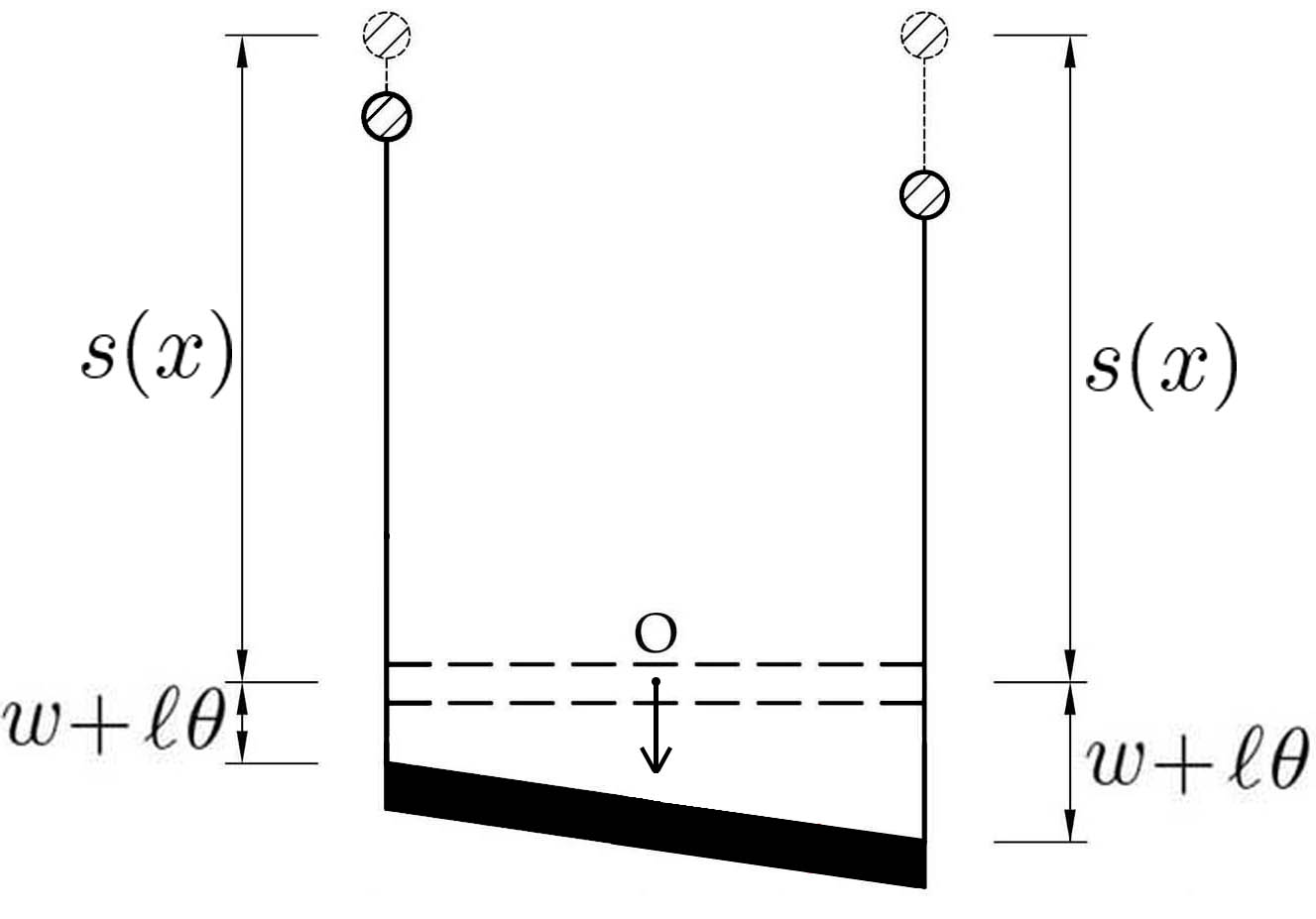}
	\caption{Fish bone model on the left and a cross section for fixed $x$ on the right. Dotted lines are the deck and cables section in rest position, $s(x)$ is the cable initial shape, see Section \ref{cable-nonlinearity}.}
	\label{fishbone}
\end{figure} 
As is common in the engineering literature, we consider linear terms (in space) coming from the bending  and torsional energies of the deck, according to the theory of solids in classical mechanic. We also include, from the Vlasov theory \cite{vlasov}, the linear fourth order term  in $\theta$ related to the warping of the deck, see \eqref{eq_sist1}$_2$. Hence, $\epsilon>0$ is a warping stiffness parameter and $\kappa\geq0$ is the torsional stiffness that we shall further clarify in Section \ref{num}.

To capture the transverse dynamics of the bridge deck, we employ  classical modeling. The dynamcis of Bernoulli and Euler were modified in 1950 by Woinowsky-Krieger \cite{woi} by assuming a nonlinear dependence of the axial strain on the deformation gradient which accounts for stretching due to elongation (``the effect of stretching on bending"). The corresponding term is in the $w$ equation \eqref{eq_sist1}$_1$ and depends on $P\geq0$, a prestressing parameter, and $S>0$, the strength of the nonlinear restoring force resulting from $x$-stretching. Let us note that the nonlocal stretching, giving rise to the superquadratic energy, effects our model (and in general in suspension bridges where $L\gg\ell$) only in the $x$-direction; this is in line with \cite{bongazlasweb}.

Since we are interested in modeling a proper suspension bridge, we add to the consideration of the deck two cables, possessing a parabolic shape $s(x)$ at rest (as in Figure \ref{sketch}). We assume these cables are movable, with rigid hangers, so the resulting nonlinearity is of geometric type. 
\begin{figure}[h]
	\centering
	\includegraphics[width=9.5cm]{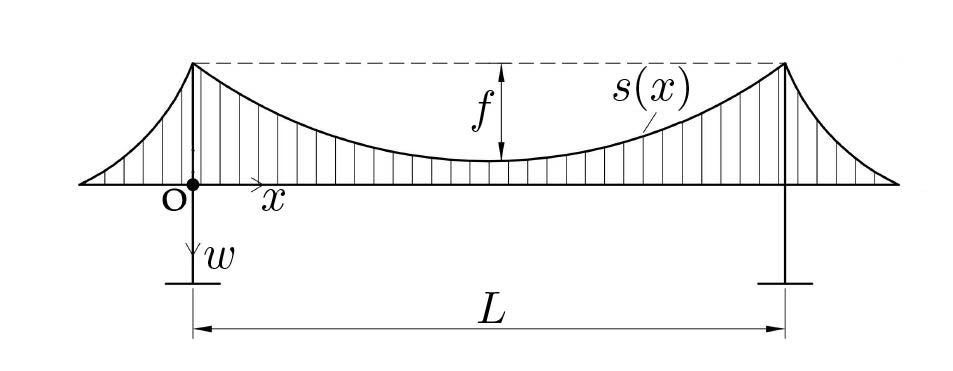}	
	\caption{Sketch of the side view of the suspension bridge with the quotes assumed positive; $s(x)$ is the cable initial shape, see Section \ref{cable-nonlinearity}.}
	\label{sketch}
\end{figure} 
We include the cable nonlinearity through  terms $f(w,\theta)$ and $\overline{f}(w,\theta)$ in \eqref{eq_sist1}. These nonlinearities are benign with respect to the theory of existence, since they are sublinear in their arguments; however, as they are algebraically complex, careful treatment is needed to address their contributions in long-time behavior analysis. In particular, we must control their growth in the Lyapunov-type analysis for the construction of an absorbing set for the dynamics, as well as precise control of the differences of two nonlinear terms for the quasi-stability analysis. We shall specify  $f$ and $\overline f$ precisely in Section \ref{cable-nonlinearity}, as the terms are involved.

As mentioned above, we consider simple aerodynamic loading on the deck of the bridge. The terms we invoke approximate an inviscid, irrotational potential flow across the surface of the bridge deck. The primary focus here is on ability of the wind to destabilize the dynamics and provide non-trivial (i.e.,  non-stationary) end behaviors. In line with \cite{bongazlasweb},  we employ a rudimentary loading, modeled by the so-called piston theoretic approximation of  potential flow. This is a classical theory utilized in the study of aerodynamical systems with high velocities or high frequencies, introduced by Lighthill \cite{lighthill} and developed subsequently by other authors (e.g., \cite{ashley, dowell}). Roughly speaking, it asserts that the pressure acting on an oscillatory slender element, e.g. an airfoil, is comparable to the pressure on an oscillatory 1D piston. Such an approximation discards transport and memory effects, which of course are physically relevant to the real-life dynamics of a suspension bridge. On the other hand, in order to focus on structural nonlinearities and their effects, we invoke this type of loading to incorporate forcing which is not external, but rather based on the interaction of the airflow about the bridge deck and its resulting deformations. This allows us to analyze stability and long-time behavior under some non-conservative aerodynamical loading, without the need for sophisticated moving boundary problems and aeroelastic interactions involving the Navier-Stokes equations (some preliminary such studies include \cite{berchio2,bon2}). Piston-theoretic models can be used to predict the so called flutter instability \cite{bongazlasweb,dowell,Mchugh}, before running expensive computational fluid dynamics (CFD) simulations.

In \cite{lamorte2}, a piston-theoretic approximation is applied to a structure with two degrees of freedom. The difference, with respect to our model here, is that the wind flows in the axial $x$-direction there, while we are interested in so called {\em normal flow}, associated to the chord-wise direction for the bridge deck. We set the problem in the planar domain $\Omega:=(0,\pi)\times (-\ell,\ell)\subset \R^2$, so that, according to the fish-bone model assumptions, we associate a transverse displacement function $u:\overline{\Omega}\times [0,T)\rightarrow\R$ given by 
$$
u(x,\Upsilon,t)=w(x,t)+\Upsilon\tan\theta(x,t)\qquad \forall(x,\Upsilon,t)\in \overline\Omega\times[0,T).
$$ 
Assuming a deck with constant thickness, a flow parallel to the plane containing $\Omega$, small rotations ($\tan\theta\approx\theta$), then the normal velocity of the structure can be represented by
\begin{equation*}\label{v}
	v_N:=\overline \beta \bigg(\!\dfrac{\partial u}{\partial t}+\mathcal{U}\,\dfrac{\partial u}{\partial\Upsilon}\bigg)=	\overline \beta\, (w_t+\Upsilon \theta_t)+\overline\beta\mathcal{U}\theta,
\end{equation*}
where $\overline\beta\in\R_+$ depends on the properties of the fluid flow and $\mathcal{U}$ is the freestream fluid velocity.
Considering the so called first order approximation, we obtain an aeroelastic surface pressure given by
\begin{equation}\label{aero}
\phi_L=-\beta (w_t+\Upsilon\theta_t)-\eta\theta,
\end{equation}
where  $\beta\in\R_+$ and $\eta:=\beta\mathcal{U}\in\R$ include all the  wind parameters.
Introducing $g$ as a vertical stationary loading, e.g.  gravity, and, taking the aerodynamic moment to be zero due to the assumed symmetry, we obtain the system of PDEs in  \eqref{eq_sist1}, complete with  aerodynamic terms; in the sequel we will typically put $\mu:=\delta+\beta$.

Although we make several simplifying assumptions, the model presented here represents a more realistic course of modeling than compared to previous simplified plate models; in Section \ref{preliminaries} we provide some preliminary results in terms of functional tools and existence and uniqueness of solutions to \eqref{eq_sist1}. This class of models 
 are complex mathematically, and take into account nonlinearities as well as geometric coupling effects, see Section \ref{cable-nonlinearity} and Appendix \ref{app2} for further details. Owing to the aspect ratio associated with actual suspension bridges, it is indeed appropriate to consider this class of fish-bone models. With non-conservative flow effects incorporated, we ask questions about boundedness, compactness, and regularity for the bridge dynamics as $t\to\infty$. Additionally, we consider the interplay between the nonlinear effects in the cables and the bridge deck. Through a Lyapunov type analysis, we construct an absorbing ball. Subsequently, we  utilize the powerful quasi-stability \cite{chueshov,chla} here by providing a completely new analysis of this novel model. This results in the construction of a smooth global attractor of finite dimension and additional regularity, and even a so called fractal exponential attractor---see Section \ref{main} for the main results and Appendix \ref{app1} for general results in this context. In Section \ref{num} we study the corresponding linear problem and we perform some numerical simulations on a suspension bridge case of study.  In Section \ref{proofs} we provide  proofs of the main theorems.

%\subsection{Model of interest}

\section{Preliminaries}\label{preliminaries}
In this section we give some preliminaries related to the function spaces needed in order to pose  problem \eqref{eq_sist1} in weak form and address well-posedness of weak solutions. Moreover, we provide some details about the cable nonlinearity and its functional properties; we then compute the energy of the system, highlighting the positive contributions.
\subsection{Function spaces}
We denote by $L^p(I)$ for $1\leq p \leq\infty$ as the usual Lebesgue spaces,  equipped with the norm $\|\cdot\|_{L^p(\Omega)}$, while $W^{k,p}(I)$, for $k\geq0$, are the classical Sobolev spaces.
Throughout the paper we primarily use the Hilbert spaces $L^2(I)$, $H^1_0(I)$ and $(H^2\cap H^1_0)(I)$ endowed with the following scalar product and norms
\begin{equation*}
\begin{split}
	(u,v)_0&=\int_I uv, \qquad (u,v)_{1}=\int_I u_xv_x, \qquad (u,v)_{2}=\int_I u_{xx}v_{xx},\\
\|u\|^2_0&=\int_I |u|^2, \qquad\hspace{1mm} \|u\|^2_{1}=\int_I |u_x|^2, \qquad\hspace{2mm} \|u\|^2_{2}=\int_I |u_{xx}|^2.
	\end{split}
\end{equation*}
The last two norms above are equivalent to the standard norms in $H^1(I)$ and $H^2(I)$, respectively. Indeed, the eigenvalue problems 
\begin{equation*}
\left\{\begin{array}{ll}
v''''=\lambda v &\text{in }I\\
v=v''=0&\text{on }\{0,\pi\}
\end{array}\right.\qquad \left\{\begin{array}{ll}
-v''''=\Lambda v'' &\text{in }I\\
v=v''=0&\text{on }\{0,\pi\}
\end{array}\right.\qquad \left\{\begin{array}{ll}
-v''=\overline\Lambda v &\text{in }I\\
v=0&\text{on }\{0,\pi\}
\end{array}\right.
\label{eig}
\end{equation*}
admit eigenvalues $\sqrt{\lambda}=\Lambda=\overline\Lambda=k^2$ ($k\in \mathbb{Z}\setminus\{0\}$) and the same eigenfunctions $v(x)=a\sin(kx)$, $a\in\R\setminus\{0\}$; if a function $v(x)$ (resp. $v(t)$) depends only on $x$ (resp. $t$) we often use the notation $v'(x)$ (resp. $\dot{v}(t)$) instead of $v_x$ (resp. $v_t$). We write the variational characterization of the first eigenvalue for the previous problems, respectively, as
\begin{equation*}
	\lambda_1=\min\limits_{\varphi\in (H^2\cap H_0^1)(I)}\dfrac{\|\varphi\|^2_{2}}{\|\varphi\|_0^2}\qquad 	\Lambda_1=\min\limits_{\varphi\in (H^2\cap H_0^1)(I)}\dfrac{\|\varphi\|^2_{2}}{\|\varphi\|_{1}^2}\qquad \overline\Lambda_1=\min\limits_{\varphi\in H^1_0(I)}\dfrac{\|\varphi\|^2_{1}}{\|\varphi\|_{0}^2}.
\end{equation*}
This implies, since $\lambda_1=\Lambda_1=\overline\Lambda_1=1$,
\begin{equation}\label{spectral}
	\|\varphi\|_0\leq\|\varphi\|_{2}\qquad\|\varphi\|_0\leq\|\varphi\|_{1}\qquad \|\varphi\|_{1}\leq\|\varphi\|_{2}\qquad \forall \varphi\in (H^2\cap H^1_0)(I).
\end{equation}
We  will also use the standard interpolation 
\begin{equation}\label{interpolation}
	\|\varphi\|_{1}^2\leq C\|\varphi\|_0\|\varphi\|_{2}\qquad \forall \varphi\in (H^2\cap H_0^1)(I),\,\,\, C>0.
\end{equation}
Throughout the paper we will often denote by $C$ a generic positive constant, that may be different from line to line. 

We denote by $\mathcal{H}$ the dual space of $(H^2\cap H^1_0)(I)$ with the corresponding duality pairing $_{-2}\langle \cdot,\cdot\rangle_{2}$.
We will also need the usual duality pairing $_{-1}\langle \cdot,\cdot\rangle_{1}$ corresponding to the space $H^{-1}$, i.e. the dual of $H_0^1(I)$.
We introduce the self-adjoint operators taken with the boundary conditions \eqref{eq_sist1}$_3$:
$A, \overline A:L^2(I)\rightarrow L^2(I)$ given respectively by $Aw=w_{xx}$ and $\overline A\theta=\epsilon \theta_{xx}+\kappa\theta_x$ on the same domain 
$$
\mathcal{D}:=\big\{u\in (H^4\cap H^1_0)(I): u_{xx}=0 \text{ on }\{0,\pi\}\big\};
$$
let us observe that the condition $u_{xx}=0$ on $\{0,\pi\}$ is the natural boundary condition associated both with $w$ and $\theta$ in strong form.

 %According to these norms, and associated energy identity \eqref{energy},
 The natural energy space for the dynamics $y = (w,w_t; \theta,\theta_t)$ is taken to be
\begin{equation*}\label{dynspace}
	Y=\Big((H^2\cap H^1_0)(I)\times L^2(I)\Big)^2,
\end{equation*} 
defined through the norm
\begin{equation*}\label{norm}
\|(w,v; \theta,\varphi)\|^2_Y=\|v\|_0^2+\|w\|_{2}^2+\|\varphi\|^2_{0}+\|\theta\|^2_{2}.
\end{equation*}
\subsection{Precise definition of the cable nonlinearity}\label{cable-nonlinearity}
Let us now define the cable nonlinearities,  denoted by $f(w,\theta)$ and $\overline f(w,\theta)$ in equations \eqref{eq_sist1}. First, we recall the equation representing the position of the two cables at rest
\begin{equation*}\label{cable0}
s(x)=-\frac{a}{2}x^2+\frac{a}{2}\pi x+s_0\qquad \forall x\in \overline I,
\end{equation*}
where $a>0$ is a tension parameter of the cable and $s_0>0$ is the length of the longest hanger---see Figure \ref{sketch}. The choice of this parabolic shape is common practice in the literature, see e.g. \cite{Gazz,von}.
Now we introduce $b,c\geq0$, two parameters related to the mechanical properties of the cable, and the functions
\begin{equation}\label{xii}
\begin{split}
\Xi(u):=&\sqrt{1+[u_x+s_x]^2}\qquad \xi_0:=\sqrt{1+(s_x)^2}\\ \mathcal{L}(u):=&\int_0^\pi \Xi(u)\,dx\qquad\hspace{6.5mm} \mathcal{L}_0:=\int_0^\pi \xi_0\,dx\\
h(u):=&\big[b\big(\mathcal{L}_0-\mathcal{L}(u)\big)-c\,\xi_0\,\big] \frac{u_x+s_x}{\sqrt{1+[u_x+s_x]^2}};
\end{split}
\end{equation}
we observe that $\xi_0=\Xi(0)$ and $\mathcal{L}_0=\mathcal{L}(0)$. We define the cable nonlinearities as 
\begin{equation}\label{melan42}
\begin{split}
f(w,\theta):=&h(w+\ell\theta)+h(w-\ell\theta)\qquad\quad
\overline 	f(w,\theta):=\ell[h(w+\ell\theta)-h(w-\ell\theta)].
\end{split}
\end{equation}
We point out that when $b=c=0$ the cable nonlinearities disappear, giving a fish bone model without cables. To each cable we associate its nonlinear energy functional
\begin{equation}\label{nonlinear}
\begin{split}
\Pi(u)=&\dfrac{b}{2}\big(\mathcal{L}(u)-\mathcal{L}_0\big)^2+c\int_0^\pi \xi_0\big(\Xi(u)-\xi_0\big)\,dx.
\end{split}
\end{equation}
This nonlinearity has some good functional properties, such as sublinear growth, as we outline below in the technical lemmas---see Section \ref{tech_lemma}.

\subsection{Well-posedness}
We now write the problem \eqref{eq_sist1} in weak form, giving the precise definition of weak solution.
\begin{definition}\label{def_weak}
	Let $T>0$,
	$w_0,\theta_0\in H^2\cap H^1_0(I)$ and $g, w_1,\theta_1\in L^2(I)$.
	A weak solution of \eqref{eq_sist1} is a pair $(w,\theta)$ with the regularity
	$$
	w,\theta\in C^0([0,T];(H^2\cap H_0^1)(I))\,\cap\, C^1([0,T], L^2(I))\,\cap \,C^2([0,T], \mathcal{H})
	$$
	such that for all $t\in(0,T]$ and for all $v,\varphi\in (H^2\cap H_0^1)(I)$, we have
	\begin{equation}\label{weak}
		\begin{cases}
			\,_{-2}\langle w_{tt}, v\rangle_{2}+\mu(w_t,v)_0+(w,v)_{2}+\big[S\|w\|^2_1-P\big](w,v)_1-(f(w,\theta),v_x)_0=(g-\beta \Upsilon\theta_t-\eta\theta,v)_0\\
			\frac{\ell^2}{3}	\,_{-2}\langle \theta_{tt}, \varphi\rangle_{2}+\zeta(\theta_t,\varphi)_0+\epsilon(\theta,\varphi)_{2}+\kappa(\theta,\varphi)_{1}-(\overline f(w,\theta),\varphi_x)_0=0.
		\end{cases}
	\end{equation}
\end{definition}
We introduce the energy of the fish-bone model as follows
\begin{equation*}
\begin{split}
	&E(w,\theta):=\dfrac{\|w_t\|_0^2}{2}+\dfrac{\|w\|_{2}^2}{2}+\ell^2\dfrac{\|\theta_t\|^2_{0}}{6}+\epsilon\dfrac{\|\theta\|^2_{2}}{2}+\kappa\dfrac{\|\theta\|^2_{1}}{2}\\
	&\mathcal{E}(w,\theta):=E\big(w,\theta\big)+\Pi\big(w+\ell\theta\big)+\Pi\big(w-\ell\theta\big)+\dfrac{S}{4}\|w\|^4_1-\dfrac{P}{2}\|w\|^2_1-\big(g,w\big)_0,
\end{split}
\end{equation*}
where $\Pi(u)$ is the cable nonlinearity given in \eqref{nonlinear}.
It is useful to isolate the positive contributions of the energy, denoted
\begin{equation*}
\begin{split}
E_+(w,\theta):=&E(w,\theta)+\dfrac{S}{4}\|w\|^4_1+\Pi\big(w+\ell\theta\big)+\Pi(w-\ell\theta).
\end{split}
\end{equation*}
Depending on the context, we may use $E(t)$, $\mathcal{E}(t)$, $E_+(t)$ to emphasize time dependence, or   $E$, $\mathcal{E}$, $E_+$, suppressing all the dependencies.

The next theorem provides weak and strong well-posedness of  \eqref{eq_sist1}, which is proved in Section \ref{proof-teo1}.
\begin{theorem}\label{teo1}
	Assume $T>0$, $g\in L^2(I)$, and let $\mu,\zeta,  \epsilon, \ell,\beta, S>0$, $P,\kappa\geq0$, $\Upsilon\in[-\ell,\ell]$ and $\eta\in \R$. For any initial data $(w_0,w_1;\theta_0,\theta_1)\in Y$ there exists a unique weak solution of \eqref{eq_sist1}. 
		Moreover, if $w_0,\theta_0\in\mathcal{D}$ and $w_1,\theta_1\in (H^2\cap H_0^1)(I)$, then
	$$
	w,\theta\in C^0([0,T];\mathcal{D})\,\cap\, C^1([0,T],  (H^2\cap H_0^1)(I))\,\cap \,C^2([0,T], L^2(I))
	$$ 
	and $u$ is a strong solution of \eqref{eq_sist1}, satisfying the equations and boundary conditions point-wise a.e. $x$ and $t$. 
	
	Any weak solution satisfies, for $0\leq s<t$, the energy identity
	\begin{equation}\label{energy}
		\mathcal{E}(t)+\mu\int_s^t\|w_t(\tau)\|_0^2d\tau+\zeta \int_s^t\|\theta_t(\tau)\|_0^2d\tau=\mathcal{E}(s)-\beta \Upsilon\int_s^t\big(\theta_t(\tau),w_t(\tau)\big)_0d\tau-\eta\int_s^t\big(\theta(\tau),w_t(\tau)\big)_0d\tau.
	\end{equation}
	\end{theorem}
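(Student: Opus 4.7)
The plan is to establish existence via a Faedo--Galerkin scheme, using the common Fourier basis $\{\sin(kx)\}_{k\in\N}$ identified in the eigenvalue discussion; this basis simultaneously diagonalizes $A$ and $\overline A$ and satisfies the hinged boundary conditions in $\mathcal{D}$. Fix $V_n:=\text{span}\{\sin(kx)\}_{k=1}^n\subset\mathcal{D}$ and write approximate solutions $w_n(x,t)=\sum_{k=1}^n\alpha_k(t)\sin(kx)$, $\theta_n(x,t)=\sum_{k=1}^n\beta_k(t)\sin(kx)$, then project \eqref{eq_sist1} onto $V_n$. Because $f$ and $\overline f$ are smooth in their arguments (inspect \eqref{xii}--\eqref{melan42}) and the Kirchhoff correction is polynomial, the resulting ODE system for $(\alpha_k,\beta_k)$ has a locally Lipschitz vector field, so Cauchy--Lipschitz yields local-in-time solutions.

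I will then derive a priori estimates by testing the approximate system with $(w_n)_t$ and $(\theta_n)_t$, producing a Galerkin analogue of the energy identity \eqref{energy}: the conservative terms assemble into $\mathcal{E}(t)$, the dissipation $\mu\|(w_n)_t\|_0^2+\zeta\|(\theta_n)_t\|_0^2$ appears with sign, and the non-conservative aerodynamic contributions $-\beta\Upsilon\theta_t-\eta\theta$ generate cross-products that are absorbed by Young's inequality into the dissipation plus $E_+(t)$. The Woinowsky--Krieger term yields the positive quartic $\tfrac{S}{4}\|w_n\|_1^4$, while $-\tfrac{P}{2}\|w_n\|_1^2$ and $(g,w_n)_0$ are dominated using \eqref{spectral}--\eqref{interpolation} and a further Young estimate. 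Sublinearity of $\Pi$ (to be established among the technical lemmas of Section \ref{tech_lemma}) controls the cable energy by $\|w_n\|_1+\|\theta_n\|_1$. Grönwall's inequality then produces uniform-in-$n$ bounds
\[
w_n,\theta_n\in L^\infty\!\left(0,T;(H^2\cap H^1_0)(I)\right),\qquad (w_n)_t,(\theta_n)_t\in L^\infty(0,T;L^2(I)),
\]
which, fed back into the equations, also bound $(w_n)_{tt},(\theta_n)_{tt}$ in $L^\infty(0,T;\HH)$. Extracting weakly-$*$ convergent subsequences and applying Aubin--Lions upgrades the convergence to strong in $C([0,T];H^1_0(I))$. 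The Lipschitz/sublinear character of $h$ transfers strong convergence to $f(w_n,\theta_n)\to f(w,\theta)$ and analogously to $\overline f$ in $L^2(I_T)$; the Kirchhoff nonlinearity passes to the limit since $\|w_n\|_1^2\to\|w\|_1^2$ while $(w_n)_{xx}\weakly w_{xx}$. This delivers a weak solution in the sense of Definition \ref{def_weak}, with the continuity in time statement obtained by a standard density and weak-continuity argument together with the energy identity.

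The main obstacle is uniqueness, because weak solutions only enjoy $C^1([0,T];L^2)$ regularity in time, preventing a direct $(W_t,\Theta_t)$ test on the differences $W:=w^1-w^2$, $\Theta:=\theta^1-\theta^2$. I will resolve this by the usual Lions--Visik trick: test the difference system with the primitives $V(t)=\int_t^\sigma W(\tau)\,d\tau$ and its analogue for $\Theta$ over $[0,\sigma]$, producing a tractable identity in which the Kirchhoff difference is controlled through
\[
\bigl|\|w^1\|_1^2 w^1_{xx}-\|w^2\|_1^2 w^2_{xx}\bigr|\le C\bigl(\|w^1\|_2,\|w^2\|_2\bigr)\|W\|_1,
\]
and the cable differences are controlled by $C(\|W\|_1+\|\Theta\|_1)$ using the Lipschitz bound on $h$ from Section \ref{tech_lemma}; Grönwall then forces $W\equiv\Theta\equiv 0$. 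Once uniqueness holds, the full Galerkin sequence converges, and passing to the limit in the approximate energy identity yields \eqref{energy}. Finally, for strong regularity I will apply the Galerkin scheme with the higher-order test pair $(-(w_n)_{xxxx},-(\theta_n)_{xxxx})$; the increased regularity of $(w_0,\theta_0)\in\mathcal{D}$ and $(w_1,\theta_1)\in(H^2\cap H^1_0)(I)$ closes an energy estimate in the stronger topology $\mathcal{D}\times(H^2\cap H^1_0)$, after which the equations \eqref{eq_sist1}$_{1,2}$ themselves provide the $L^2$ bound on $w_{tt},\theta_{tt}$ and justify the pointwise interpretation.
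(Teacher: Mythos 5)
Your existence scheme (Galerkin on the sine basis, testing with $(w_n)_t,(\theta_n)_t$, Gr\"onwall, compactness, passage to the limit in the sublinear cable terms and in the Kirchhoff term) matches the paper's in substance, with Aubin--Lions in place of the paper's Cauchy-in-energy-norm argument; that part is fine. The genuine gap is in your uniqueness step. Your premise --- that the $C^1([0,T];L^2)$ regularity of weak solutions forbids testing the difference system with $(W_t,\Theta_t)$ --- is exactly the obstruction the paper removes by a different device: since every term other than $W_{tt}+W_{xxxx}$ (damping, flow terms, Kirchhoff and cable differences, using $H^2\subset C^1$) lies in $L^2(0,T;L^2(I))$, Temam's lemma (Lemma \ref{lemmatemam}) legitimizes the direct $(W_t,\Theta_t)$ test and gives the chain-rule identity, after which locally Lipschitz estimates at the $H^2$ level close a quadratic Gr\"onwall. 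Your alternative, the Lions--Visik primitive test $V(t)=\int_t^\sigma W$, does not close with the estimates you state: that identity controls only $\|W(\sigma)\|_0^2$ and $\|V(0)\|_2^2$, whereas the cable and Kirchhoff differences are Lipschitz only at the $H^1$ (resp.\ $H^2$) level, so the right-hand side involves $\|W\|_1,\|\Theta\|_1$. The only way to express these through quantities the left side controls is interpolation, $\|W\|_1\le C\|W\|_0^{1/2}\|W\|_2^{1/2}\le C\|W\|_0^{1/2}$ (using the a priori $H^2$ bound), which produces an inequality of the form $\sup_{[0,\sigma]}\|W\|_0^2\le C\int_0^\sigma(\|W\|_0+\|\Theta\|_0)$; this sub-quadratic (Osgood-failing) inequality admits nonzero solutions, so uniqueness does not follow. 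Also, your displayed bound $\bigl|\|w^1\|_1^2w^1_{xx}-\|w^2\|_1^2w^2_{xx}\bigr|\le C\|W\|_1$ is false as stated: the difference contains $\|w^1\|_1^2W_{xx}$, so in $L^2$ it costs $\|W\|_2$ (it is only after moving one derivative onto the test function that $\|W\|_1$ suffices).

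A secondary issue: for strong solutions you propose testing with $(-(w_n)_{xxxx},-(\theta_n)_{xxxx})$. This elliptic multiplier does not produce a coercive higher-order energy for a second-order-in-time problem (the term $(w_{tt},w_{xxxx})_0$ is not a total derivative of a signed quantity); the workable multipliers are either $A\dot w_n$-type or, as the paper does, differentiating the Galerkin system in time and testing with $(\ddot w_n,\ddot\theta_n)$, then recovering the $H^4$ bound from the equation. This is fixable, but as written the step would not close.
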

	\begin{remark} Semigroup methods are also amenable to this (and comparable beam and plate) problems, which can provide strong and so called generalized solutions---see \cite{chla}. In all cases, energy estimates and Lyapunov calculations are performed on smooth solutions, and extended by density for weak and/or mild solutions. \end{remark}

In the standard way, the well-posedness (in the weak and strong sense) allow us to define the solution strongly continuous (nonlinear) semigroup $S_t$ on the energy space $Y$. Indeed, we define for any $y_0=(w_0,w_1;\theta_0,\theta_1) \in Y$ $S_t(y_0) = y(t) = (w(t), w_t(t);\theta(t),\theta_t(t))$, which represents the unique weak solution to \eqref{eq_sist1} with initial data $y_0$. As in \cite{chla}, the pair $(S_t,Y)$ represents a {\em dynamical system}, and this is the system to which we will provide our long-time behavior results below in Proposition \ref{prop-absorbing} and our main theorem, Theorem \ref{teo-main}. 
\section{Results and discussion}\label{main}
In stating our results, please  take note of Appendix \ref{app1} for technical definitions concerning dissipative dynamical systems. Additionally, that Appendix includes the abstract theorems we invoke---after proving estimates---which will yield the results stated here. 

\subsection{Statement of main results and context}
Let $(S_t,Y)$ denote the dynamical system associated to the solution semigroup provided by Theorem \ref{teo1}.
In the first proposition we prove that this dynamical system  has a uniform  absorbing ball, under a restriction on the $\theta$-stiffness parameter $\epsilon$. The proof is given in Section \ref{proof-abs}.
\begin{proposition}\label{prop-absorbing}
	Assume $T>0$, $g\in L^2(I)$, with $\mu,\zeta,\ell,\beta, S>0$, $P,\kappa\geq0$,  $\Upsilon\in[-\ell,\ell]$, $\eta\in \R$. Then, there is a ~$\overline\nu=\overline\nu(\mu,\zeta)$, so that for any
	\begin{equation*}\label{assumpt}
		\epsilon\in\bigg(0\,,\,\frac{\ell^2\overline{\nu}^2}{3\beta^2}\bigg)
	\end{equation*}
 the dynamical system $(S_t,Y)$ corresponding to weak solutions to \eqref{eq_sist1} has a uniformly absorbing set $\mathcal{B}_{\bar\nu}$.
\end{proposition}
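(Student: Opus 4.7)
The plan is to construct a perturbed Lyapunov functional that is equivalent to the positive part $E_+$ of the energy and whose time derivative satisfies a differential inequality of the form $\dot V + c V \leq C$; Gr\"onwall then forces all trajectories into a common ball. Concretely, I would set
\[
V(t) := \mathcal{E}(t) + \gamma F(t), \qquad F(t) := (w,w_t)_0 + \tfrac{\ell^2}{3}(\theta,\theta_t)_0,
\]
for a small $\gamma>0$ to be fixed. Young's inequality dominates $|F|$ and $|(g,w)_0|$ by $\tfrac12 E_+ + C$, while the indefinite term $-\tfrac{P}{2}\|w\|_1^2$ in $\mathcal{E}$ is absorbed by the superquadratic $\tfrac{S}{4}\|w\|_1^4$ (completing a square). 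Combined with the sublinear growth of $\Pi$ to be established in the technical lemmas of Section \ref{tech_lemma}, one obtains $c_1 E_+ - C \leq V \leq c_2 E_+ + C$, so $V$ controls the $Y$-norm up to a universal constant.

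Next, I would differentiate $V$. The contribution $\dot{\mathcal{E}}$ is read off from the energy identity \eqref{energy} in differential form as $-\mu\|w_t\|_0^2 - \zeta\|\theta_t\|_0^2 - \beta\Upsilon(\theta_t,w_t)_0 - \eta(\theta,w_t)_0$. For $\dot F$, I test the $w$-equation in \eqref{weak} with $w$ and the $\theta$-equation with $\theta$, yielding the desired coercive terms $-\gamma\|w\|_2^2 - \gamma\epsilon\|\theta\|_2^2 - \gamma\kappa\|\theta\|_1^2 - \gamma S\|w\|_1^4$, at the cost of the kinetic overshoots $\gamma\|w_t\|_0^2 + \gamma\tfrac{\ell^2}{3}\|\theta_t\|_0^2$ and the nonlinear couplings $\gamma(f(w,\theta),w_x)_0$, $\gamma(\bar f(w,\theta),\theta_x)_0$. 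These last couplings, together with the remaining $P\|w\|_1^2$-type terms, are controlled by the sublinear/nonlocal estimates on $h, f, \bar f$ coming from \eqref{xii}--\eqref{nonlinear}, which bound them by $\tfrac{c_1}{2} E_+ + C$.

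The crux---and expected main obstacle---is simultaneously absorbing the two aerodynamic couplings $-\beta\Upsilon(\theta_t,w_t)_0$ and $-\eta(\theta,w_t)_0$ by the available dampings. Using $|\Upsilon|\leq \ell$, I would split the cross term via Young as $|\beta\Upsilon(\theta_t,w_t)_0| \leq \tfrac{\nu_1}{2}\|w_t\|_0^2 + \tfrac{\beta^2\ell^2}{2\nu_1}\|\theta_t\|_0^2$, and for the warping-coupled term use the spectral bound \eqref{spectral} together with the $\epsilon\|\theta\|_2^2$ available in $\mathcal{E}$, i.e.\ $|\eta(\theta,w_t)_0|\leq \tfrac{\nu_2}{2}\|w_t\|_0^2 + \tfrac{\eta^2}{2\nu_2\epsilon}(\epsilon\|\theta\|_2^2)$. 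The $\bar\nu(\mu,\zeta)$ in the statement quantifies the slack that must be left in both $\mu\|w_t\|_0^2$ and $\zeta\|\theta_t\|_0^2$ after these absorptions and after the perturbation overshoots $\gamma\|w_t\|_0^2, \gamma\tfrac{\ell^2}{3}\|\theta_t\|_0^2$ are accounted for; choosing $\nu_1,\nu_2,\gamma$ optimally, the requirement that the residual coefficients of $\|w_t\|_0^2, \|\theta_t\|_0^2$ be strictly positive collapses to the single quantitative condition $3\beta^2\epsilon < \ell^2\bar\nu^2$, which is precisely the stated restriction.

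Finally, once the balance above is secured, I get $\dot V \leq -c E_+ + C \leq -c' V + C'$, so $V(t)\leq V(0)e^{-c't} + C'/c'$. Translating back to $\|\cdot\|_Y$ via the equivalence $V\sim E_+$ yields the absorbing ball $\mathcal{B}_{\bar\nu}$ of radius depending only on $\bar\nu$ and the forcing $\|g\|_0$, entered by every trajectory after a time depending only on $\|y_0\|_Y$. The delicate bookkeeping step---tracking how the cable nonlinearities $f,\bar f$ and the Woinowsky--Krieger nonlocal term interact with the perturbation $\gamma F$ without destroying the sign of the leading quadratic form in $(\|w_t\|_0,\|\theta_t\|_0,\|w\|_2,\|\theta\|_2)$---is where the bulk of the technical work will reside.
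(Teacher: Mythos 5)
There is a genuine gap, and it lies exactly at the point you identify as the crux. Your plan is to absorb the flow couplings $-\beta\Upsilon(\theta_t,w_t)_0$ and $-\eta(\theta,w_t)_0$ directly by Young's inequality into the dampings $\mu\|w_t\|_0^2$ and $\zeta\|\theta_t\|_0^2$. With $|\Upsilon|\le\ell$, the quadratic form $\mu a^2-\beta\ell\,ab+\zeta b^2$ is positive definite only if $\beta^2\ell^2<4\mu\zeta$, so your splitting $|\beta\Upsilon(\theta_t,w_t)_0|\le \tfrac{\nu_1}{2}\|w_t\|_0^2+\tfrac{\beta^2\ell^2}{2\nu_1}\|\theta_t\|_0^2$ leaves positive slack in both dampings only under a smallness relation between $\beta\ell$ and $\mu,\zeta$ that the proposition does not assume ($\beta$ is arbitrary; only $\epsilon$ is restricted). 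Similarly, your treatment of the $\eta$-term requires $\tfrac{\eta^2}{2\nu_2}\|\theta\|_2^2$ to be absorbed by the available $\gamma\epsilon\|\theta\|_2^2$, which forces smallness of $\eta$ relative to $\gamma\epsilon$ — again not assumed, and in fact pointing in the direction of a \emph{lower} bound on $\epsilon$, opposite to the stated restriction. The final claim that the bookkeeping ``collapses to $3\beta^2\epsilon<\ell^2\bar\nu^2$'' is asserted, not derived, and does not follow from the inequalities you wrote.

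The paper's proof avoids this obstruction by a different Lyapunov function: besides the standard perturbations, $V_{\nu,\mu,\zeta}$ in \eqref{V} contains the cross terms $\beta\Upsilon(\theta_t,w)_0+\eta(\theta,w)_0$. Differentiating and using the product rule, the dangerous terms from \eqref{energy} are rewritten as $(\theta,w_t)_0=\tfrac{d}{dt}(\theta,w)_0-(\theta_t,w)_0$ and $(\theta_t,w_t)_0=\tfrac{d}{dt}(\theta_t,w)_0-(\theta_{tt},w)_0$, and then $\theta_{tt}$ is eliminated using \eqref{eq_sist1}$_2$; the total-derivative pieces cancel against the cross terms in $V$, and what remains are the couplings $(\theta_t,w)_0$, $\tfrac{3\epsilon}{\ell^2}\beta\Upsilon(\theta,w)_2$, $\tfrac{3\kappa}{\ell^2}\beta\Upsilon(\theta,w)_1$ and $\tfrac{3}{\ell^2}\beta\Upsilon(\overline f,w_x)_0$. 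The restriction $\epsilon<\tfrac{\ell^2\overline\nu^2}{3\beta^2}$ arises precisely from absorbing $\tfrac{3\epsilon}{\ell^2}\beta\Upsilon(\theta,w)_2$ into $-\nu\|w\|_2^2$ and $-\nu\epsilon\|\theta\|_2^2$ (with the choice $\eps_1=\tfrac32\nu\epsilon$), not from slack in the velocity dampings. Two further ingredients you would need in any case: the structural inequality \eqref{h-weak}, which converts $\nu(f,w_x)_0+\nu(\overline f,\theta_x)_0$ into $-\nu\Pi(w\pm\ell\theta)$ plus lower-order terms (so that $\dot V\le -c_3E_+ + c_4$ with $E_+$ containing $\Pi$), and Lemma \ref{eps-lemma}, which controls the accumulated low-frequency terms $\|w\|_0^2$, $\|w\|_1^2$ by $\gamma(\|w\|_2^2+\|w\|_1^4)+C_\gamma$; your sketch gestures at sublinearity of $\Pi$ but does not supply either mechanism. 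As written, your argument would prove an absorbing set only under additional smallness assumptions on $\beta\ell$ and $\eta$, which is strictly weaker than the stated result.
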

\begin{remark}
	There are several ways to interpret the numerology of the above result. We  view the result as follows: given the damping parameters $\mu,\zeta>0$, there is a fixed $\overline{\nu}$ which dictates the size of the absorbing set. That absorbing set is uniform for all values of the warping stiffness parameter $\epsilon$ in the above specified range. 
	\end{remark}
 As the dynamics in \eqref{eq_sist1} are non-gradient (with unsigned terms appearing in the energy identity), we do not expect a clean characterization of the global attractor as the unstable manifold of the stationary points \cite{chueshov,chla}. Thus, to show that there is a compact global attractor for these dynamics, it is necessary to construct the absorbing set ``by hand" using Lyapunov methods. Our proof involves a novel Lyapunov function, tailored to the two nonlinearities present here, as well as the non-trivial flow contributions. 

With an absorbing ball for $(S_t,Y)$ in hand, we proceed to our main result  on attractors for the system \eqref{eq_sist1}. This is proved in Section \ref{proof-teo-main}. The proof relies on critically decomposing the nonlinear terms in a novel way; the resulting technical lemmata which support the main result are found in Appendix \ref{app2}. 
\begin{theorem}\label{teo-main}
		Under the same assumptions of Proposition \ref{prop-absorbing}, there exists a compact global attractor $\mathcal{A}$ for the dynamical system $(S_t,Y)$ corresponding to weak solutions to \eqref{eq_sist1} (as in Theorem \ref{teo1}). Moreover,
		\begin{itemize}
			\item $\mathcal A$ is smooth in the sense that $\mathcal{A}\subset (H^4\cap H^1_0)(I)\times  (H^2\cap H^1_0)(I)$ and is a bounded set in that topology;
			\item $\mathcal A$ has  finite fractal dimension in the space $Y$;
			\item there exists a generalized fractal exponential attractor $\widetilde{\mathcal{A}}_{exp}\subset Y$ with finite fractal dimension in $\widetilde{Y}:=(L^2(I)\times \mathcal{H})^2$.
		\end{itemize}
\end{theorem}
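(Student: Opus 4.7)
The plan is to prove all four assertions simultaneously by verifying the hypotheses of the abstract quasi-stability machinery (the Chueshov--Lasiecka framework) recalled in Appendix \ref{app1}. Proposition \ref{prop-absorbing} has already furnished a bounded absorbing set $\mathcal{B}_{\bar\nu}\subset Y$ for $(S_t,Y)$, so what remains is to establish a \emph{stabilizability / quasi-stability estimate} on $\mathcal{B}_{\bar\nu}$ of the form
\begin{equation*}
\|S_t y_1 - S_t y_2\|_Y^2 \;\leq\; C\, e^{-\alpha t}\|y_1-y_2\|_Y^2 \;+\; C\sup_{s\in[0,t]}\bigl(\|w_1(s)-w_2(s)\|_1^2+\|\theta_1(s)-\theta_2(s)\|_1^2\bigr),
\end{equation*}
for $y_1,y_2\in\mathcal{B}_{\bar\nu}$, where the compensator is measured in a norm compactly embedded in the energy space. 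Once this inequality is secured, the abstract theory automatically delivers asymptotic smoothness (hence a compact global attractor $\mathcal{A}$), finiteness of the fractal dimension $\dim_f^Y\mathcal{A}<\infty$, and the existence of a fractal exponential attractor $\widetilde{\mathcal{A}}_{\exp}$ in the weaker space $\widetilde Y=(L^2(I)\times\mathcal{H})^2$.

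To obtain the stabilizability bound I would set $z:=w_1-w_2$ and $\psi:=\theta_1-\theta_2$, subtract the two copies of \eqref{weak}, and test against $(z_t,\psi_t)$ to obtain the usual difference energy identity. The linear damping terms $\mu\|z_t\|_0^2$ and $\zeta\|\psi_t\|_0^2$ provide the only ``for free'' dissipation, so to generate exponential decay of $\|(z,z_t;\psi,\psi_t)\|_Y$ I would add to the difference energy the multipliers $\epsilon_0(z,z_t)_0$ and $\epsilon_0(\psi,\psi_t)_0$, balanced with a small $\epsilon_0>0$. This perturbed Lyapunov functional is equivalent to the energy, and after testing with $(z,\psi)$ produces the missing ``potential'' terms $\|z\|_2^2$, $\epsilon\|\psi\|_2^2$, $\kappa\|\psi\|_1^2$ modulo controllable remainders coming from the nonlinear and aerodynamic contributions.

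The crux of the argument, and what I expect to be the main obstacle, is estimating the two nonlinear residuals
\begin{equation*}
\mathcal{R}_1:=\bigl((P-S\|w_1\|_1^2)w_{1,xx}-(P-S\|w_2\|_1^2)w_{2,xx},\,z_t+\epsilon_0 z\bigr)_0,\qquad
\mathcal{R}_2:=\bigl(f(w_1,\theta_1)-f(w_2,\theta_2),\,z_x+\epsilon_0 z_{t,x}\bigr)_0
\end{equation*}
(and the analogous $\overline f$--residual tested against $\psi$). For $\mathcal R_1$, the standard trick is to write
\begin{equation*}
(P-S\|w_1\|_1^2)w_{1,xx}-(P-S\|w_2\|_1^2)w_{2,xx}=\bigl(P-S\|w_1\|_1^2\bigr)z_{xx}-S\bigl(\|w_1\|_1^2-\|w_2\|_1^2\bigr)w_{2,xx},
\end{equation*}
and then to exploit uniform $Y$--bounds on $\mathcal B_{\bar\nu}$ together with interpolation \eqref{interpolation} to throw all top-order pieces onto a small multiple of $\|z\|_2^2$ plus a term controlled by $\|z\|_1^2$, which is precisely the compact compensator. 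For $\mathcal R_2$, I would rely on the decomposition of $h$ from \eqref{xii}--\eqref{melan42} alluded to in Appendix \ref{app2}: splitting $h(u_1)-h(u_2)$ into a dominant linear-in-differences piece (handled by coercivity of the bending energy) and a lower-order remainder whose Lipschitz constant on $\mathcal B_{\bar\nu}$ can be absorbed via $\|z\|_1+\|\psi\|_1$. The non-conservative flow terms $\beta\Upsilon\psi_t$ and $\eta\psi$ are linear and lower order, hence trivially lower-order compensators modulo Young's inequality.

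Combining these estimates, Gronwall on the perturbed Lyapunov functional delivers the stabilizability inequality above, whence the abstract theorem yields the global attractor, its finite fractal dimension in $Y$, and the generalized exponential attractor in $\widetilde Y$. For the additional \emph{smoothness} assertion $\mathcal{A}\subset (H^4\cap H^1_0)(I)\times (H^2\cap H^1_0)(I)$, I would apply the standard consequence of quasi-stability: on the attractor the time derivative $(w_t,\theta_t)$ lies uniformly in $(H^2\cap H^1_0)(I)$; substituting this regularity back into the elliptic problem obtained by freezing $(w_t,\theta_t)$ and $(w_{tt},\theta_{tt})$ in the stationary form of \eqref{eq_sist1}, and using the sublinearity of $f,\overline f$ together with the uniform $H^2$--bound on $w,\theta$ to absorb the nonlinear right-hand sides, yields by elliptic regularity the claimed $H^4\times H^2$ bound on $\mathcal A$.
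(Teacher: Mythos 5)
Your overall strategy is the paper's: absorbing ball from Proposition \ref{prop-absorbing}, a quasi-stability estimate on it, and then the abstract machinery (Corollary \ref{doy*}, Theorem \ref{expattract*}) to get the attractor, its dimension/smoothness, and the exponential attractor. However, there is a genuine gap at the heart of the argument, namely in your treatment of the cable-hanger residual $\mathcal{R}_2$. When the difference equation is tested with $z_t$ (or $z_t+\epsilon_0 z$), the dangerous pairing is
\begin{equation*}
\big\langle h(u^\mathcal{I})-h(u^\mathcal{II}),\,z_{xt}\big\rangle ,\qquad u^\mathcal{I}-u^\mathcal{II}=z,
\end{equation*}
and the ``dominant linear-in-differences piece'' of $h(u^\mathcal{I})-h(u^\mathcal{II})$ is (bounded coefficient)$\cdot z_x$. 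Pairing this with $z_{xt}$ is \emph{not} controlled by coercivity of the bending energy: estimating it pointwise in time forces either $\|z_t\|_1$ (which is not in the difference energy at all) or, after integrating by parts in space, a term of size $C(R)\,\|z\|_2\,\|z_t\|_0$ with an $O(1)$ constant depending on the cable parameters. Such a term is of the same order as the difference energy itself and cannot be absorbed by the only available negative quantities, $-\mu\|z_t\|_0^2$, $-\zeta\|\psi_t\|_0^2$ and the small $-\epsilon_0\|z\|_2^2$ coming from your multiplier, without artificial smallness or largeness assumptions. So the Gronwall step on your perturbed Lyapunov functional does not close as written; a plain ``Lipschitz on $\mathcal B_{\bar\nu}$ plus Young'' argument yields uniqueness-type estimates (as in the proof of Theorem \ref{teo1}) but not the decay-plus-compact-compensator structure \eqref{specquasi}.

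The missing idea, which is precisely the paper's key technical contribution, is the decomposition in Lemma \ref{zcable} (Appendix \ref{app2}): one writes the top-order pairing as a total time derivative,
\begin{equation*}
\big(\,[\,b(\mathcal{L}_0-\mathcal{L}(u^\mathcal{I}))-c\,\xi_0\,]A^{(1)}(q_x)z_x,\,z_{xt}\big)_0
=\tfrac12\tfrac{d}{dt}\big(\,[\,b(\mathcal{L}_0-\mathcal{L}(u^\mathcal{I}))-c\,\xi_0\,]A^{(1)}(q_x)z_x,\,z_x\big)_0+\text{commutators},
\end{equation*}
integrates in time, and controls the boundary-in-time term by $\sup_\tau\|z(\tau)\|_{2-\eta}^2$ (the compact compensator) while the commutator terms, which involve $u^\mathcal{I}_t,u^\mathcal{II}_t$ bounded in $L^2$ on the absorbing set, are bounded by $\eps\int E_z+C(\eps,R)\sup_\tau\|z\|_{2-\eta}^2$. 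The same integrated-in-time structure is what underlies the treatment of the Woinowsky--Krieger difference \eqref{ts3} (your $\mathcal{R}_1$): the piece $(P-S\|w^\mathcal{I}\|_1^2)(z_x,z_{xt})_0$ must likewise be rewritten as $\tfrac12\tfrac{d}{dt}[(P-S\|w^\mathcal{I}\|_1^2)\|z\|_1^2]$ plus a commutator, not absorbed pointwise by interpolation. This is also why the paper does not use a perturbed Lyapunov functional for the differences at all, but rather the integral inequality of Lemma \ref{lemma_E} followed by the estimates \eqref{ts3}--\eqref{ts2} and an iteration over time intervals to reach \eqref{specquasi}. Your bootstrap for the $H^4\times H^2$ smoothness of $\mathcal A$ and your appeal to the abstract theorems are fine once the quasi-stability estimate is in hand (for the exponential attractor one must also verify the H\"older-in-time continuity in $\widetilde Y$ required by Theorem \ref{expattract*}), but without the time-derivative decomposition of the nonlinear differences the central estimate is not established.
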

The compact global attractor here encapsulates the end behaviors of these flow-driven, fish-bone dynamics. In particular, as we will investigate numerically below, the attractor contains the stationary points for these nonlinear equations as well as the possibility of dynamic end behavior. By invoking  quasi-stability theory \cite{chueshov,chla}, as outlined in Appendix \ref{app1}, we are able to show the first two bullet points above via a single stabilizability estimate on the difference of two trajectories. In order to obtain this estimate, some ``compactness" must be harvested from the nonlinearities, via particular algebraic decompositions of the nonlinear terms (in the aforementioned Section \ref{app2}). Following this, a straightforward estimate on the difference of trajectories in a weaker topology provides the (not necessarily unique) generalized (fractal) exponential attractor. In this scenario, the presence of dissipation in both components of the model is enough to stabilize---in a uniform way---all trajectories to a smooth and finite dimensional set in the state space. By ``fattening" this invariant attractor $\mathcal A$, we are able to observe exponential convergence rates, though the resulting set $\mathcal A_{exp}$ need not  be unique nor finite dimensional in the state space $Y$. In some sense, then,  the structural dissipation present in the model works in conjunction with the large-deflection nonlinearity to 
counter the non-dissipative and non-conservative flow terms scaled by $\Upsilon$ and $\eta$ as well as the nonlinear cable-hanger effects. The result is that the overall dynamics remain bounded, and in fact squeeze down to a ``nice" (and uniform) set.

\subsection{Central challenges and contributions}

We conclude this section by emphasizing some of the novelties and challenges overcome in this contribution:
\begin{itemize}
\item We consider a fully nonlinear structural model, taking into account cable-hanger and nonlinear coupling between the transverse vertical and angular dynamics; additionally, we allow the possibility of large transverse vertical deflections, in line with previous analyses of suspension bridge models \cite{bongazlasweb, crfaga,falocchi1,mckenna}. This nonlinear, coupled hyperbolic-hyperbolic-type structural dynamics has not previously appeared in the literature. 
\item We adapt the rudimentary aerodynamic approximation utilized in previous works to the set of variables relevant here, allowing for the addition of non-conservative, flow-driven loading in the fish-bone dynamics. 
\item In considering these two nonlinear effects and their interaction, it is not obvious that compactness can be extracted  to obtain a stabilizability estimate (more broadly, some notion of asymptotic compactness or asymptotic smoothness \cite{chla}); however, utilizing a new decomposition of the hanger nonlinearity in conjunction with a decomposition for Woinowky-Krieger \cite{holawe} type dynamics, we precisely obtain this asymptotic notion of squeezing/compactness. 
\item To obtain the absorbing ball, low frequencies (lower norms) emanating from the non-dissipative flow terms must be controlled through the structural nonlinearities. While this has been explored in depth for non-conservative beams and plate models \cite{chla,holawe}, it was novel to consider whether the Woinowsky-Krieger nonlinearity---acting only in the transverse vertical variable---is sufficient to provide this control for the {\em coupled} dynamics. Indeed, a central challenge is to get estimates for the Lyapunov function, including terms in $\theta$, using the control provided Lemma \ref{eps-lemma} appearing only in the transverse $w$ variable. Careful attention and sharpness is required in the process of obtaining the estimates in Seciton \ref{proof-abs}. Additionally, a novel, adapted Lyapunov function in Section \ref{proof-abs} accommodates the nonlinear coupling, as well as the lower order flow terms in $\theta$. 
\item As demonstrated in \cite{bongazlasweb,holawe}, non-conservative bridge models with Woinowksy-Krieger (or Berger plate) nonlinearities are good candidates for the direct application of quasi-stability, including harvesting its most powerful theorems in application. Indeed, by constructing the absorbing ball $\mathcal B$ in Proposition \ref{prop-absorbing} and obtaining a good decomposition of the nonlinear difference dynamics on $\mathcal B$, we can invoke Corollary \ref{doy*} to obtain the existence of the attractor, as well as its smoothness and finite dimensionality in ``one shot". 
\item Finally, we remark that there is a clear dependence of our results on the strength of the coupling through the $\theta$-stiffness parameter, $\epsilon$. At present, we do not feel like this can be eliminated without requiring large damping through the $\zeta$-parameter. 
\end{itemize}

\begin{remark}[Damping and linearity in the model]
The linear model (taking both nonlinear effects null) results in a one-way or partially coupled model. For this model, one can explicitly solve for $\theta$ using a series, and plug the result into the RHS of the $w$-dynamics. In Section \ref{num} we discuss this further, including exponential stability and the possibility of vanishing damping coefficients. A particularly salient case is the one where no damping is imposed in the system at all, other than what comes from the aerodynamics terms in \eqref{aero}. For our results on attractors for the nonlinear model, we note that the presence of the nonlinear cable-hanger provides coupling in the system, so that it is in fact (nonlinearly) strongly coupled. In this case, if it is of course to be expected that dissipation is required in both components of the model to expect stability to a nice set in the state space $Y$. 
\end{remark}

\section{Explicit solutions and numerical results}\label{num}
In this section we present some numerical experiments on the system \eqref{eq_sist1}, taking $g$ constant and $I=(0,L)$,  $L$ being the length of the span of the bridge; in the spirit of the proof of Theorem \ref{teo1}, we apply the Galerkin procedure, approximating by in vacuo structural eigenfuctions (modes). This modal approach has been utilized recently in the mathematically-themed numerical works \cite{holawe,HHWW}. Since the eigenfunctions of standard elasticity operators form a basis for the state space, a good well-posedness
result for the full system justifies this kind of approximation. Hence,
we reduce the evolutionary PDE to a finite dimensional system of ODEs via modal truncation.

More precisely, given the boundary conditions, we seek approximated solutions in the form
\begin{equation}
	w(x,t)=\sqrt{\frac{2}{L}}\sum_{j=1}^{N} w_j(t) \hspace{1mm}\sin\bigg(\dfrac{j\pi x}{L}\bigg), \hspace{5mm} \theta(x,t)=\sqrt{\frac{2}{L}}\sum_{j=1}^{N} \theta_j(t) \hspace{1mm}\sin\bigg(\dfrac{j\pi x}{L}\bigg).
	\label{approx}
\end{equation}
with $N\geq 1$ and initial data
\begin{equation}
	w_i(x)=\sqrt{\frac{2}{L}}\sum_{j=1}^{N} w_j^i \hspace{1mm}\sin\bigg(\dfrac{j\pi x}{L}\bigg), \hspace{5mm} \theta_i(x)=\sqrt{\frac{2}{L}}\sum_{j=1}^{N} \theta_j^i \hspace{1mm}\sin\bigg(\dfrac{j\pi x}{L}\bigg)\qquad (i=0,1).
	\label{ics}
\end{equation}

In the next subsection we give a preliminary analysis on the corresponding linear problem, where the shape of the eigenfunctions (identical for both $w$ and $\theta$) allows to find the exact representation of the solution, i.e. $N\rightarrow\infty$ in \eqref{approx}.

\subsection{Linear modal and stability analysis}
In this section we consider the linear problem, i.e. \eqref{eq_sist1} with $P=S=0$ and $f=\overline f\equiv0$, or equivalently $b=c=0$ in \eqref{xii}--\eqref{melan42}; hence, we obtain
\begin{equation}
	\left\{\begin{array}{ll}
		w_{tt}+\mu\, w_t+w_{xxxx}=g-\beta \Upsilon\theta_t-\eta\theta &\text{in }I_T\\
		\frac{\ell^2}{3}\theta_{tt}+\zeta\, \theta_t+\epsilon\theta_{xxxx} -\kappa\theta_{xx}=0\,& \text{in }I_T\\
		w=w_{xx}=\theta=\theta_{xx}=0 &\text{on }\{0,L\}\times(0,T)\\
		w(x,0)=w_0(x),\quad\,\,\theta(x,0)=\theta_0(x) &\text{on }\overline I\\
		w_t(x,0)=w_1(x),\quad\theta_t(x,0)=\theta_1(x) &\text{on }\overline I.
	\end{array}\right.
	\label{eq_lin}
\end{equation}
We note immediately that this linear problem is not strongly coupled. Indeed, the presence of $f,\overline f$ provide nonlinear coupling (through the cable-hanger) in \eqref{eq_sist1}. With the one-way coupled linear problem in \eqref{eq_lin}, we can directly utilize the series expansions introduced in \eqref{approx}. Moreover, we benefit from the unique structure of the fish-bone equations, providing the same mode functions for both solution variables in the system. 
Plugging \eqref{approx} into \eqref{eq_lin}, testing by $\sqrt{2/L}\sin(\pi kx/L)$ with $k=1,2,\dots$ and integrating over $(0,L)$ we produce the following system of ODEs
\begin{equation}
	\begin{cases}
		\ddot{w}_j(t)+\mu\dot{w}_j(t)+\frac{j^4\pi^4}{L^4} w_j(t)=-\beta\Upsilon\dot{\theta}_j(t)-\eta \theta_j(t)+g\dfrac{\sqrt{2L}(1-(-1)^j)}{j\pi}\\
		\dfrac{\ell^2}{3}\ddot{\theta}_j(t)+\zeta\dot{\theta}_j(t)+\big(\epsilon\frac{j^4\pi^4}{L^4}+\kappa\frac{j^2\pi^2}{L^2}\big)\theta_j(t)=0\\
		w_j(0)=w_j^0,\quad \dot{w}_j(0)=w_j^1\\
		\theta_j(0)=\theta_j^0,\quad\,\,\, \dot{\theta}_j(0)=\theta_j^1
	\end{cases}\quad j=1,2,\dots.
	\label{odes}
\end{equation}
Noting the decoupling, we can explicitly plug the $\theta_j$ solutions into the equation for $w_j$. 
Rewriting the system in matrix form, using reduction of order, for each $j$ fixed we obtain a first order linear system of four equations. That system has eigenvalues $\lambda$ which must satisfy the characteristic equation
\begin{equation}\label{eig0}
\bigg(\dfrac{\ell^2}{3}\lambda^2+\zeta\lambda+\epsilon\frac{j^4\pi^4}{L^4}+\kappa\frac{j^2\pi^2}{L^2}\bigg)\bigg(\lambda^2+\mu\lambda+\frac{j^4\pi^4}{L^4}\bigg)=0.
\end{equation} 
When we assume the damping coefficients are positive, so $\mu,\zeta>0$, we see that the real part of all eigenvalues is  negative. Thus we observe, immediately, that solutions to \eqref{eq_lin} are always exponentially stable. We state this as a lemma. 
\begin{lemma}\label{lemma00}
		Let $ \mu,\zeta,\epsilon,\ell, L,\beta>0$, $\kappa\geq 0$, $\Upsilon\in[-\ell,\ell]$, $\eta\in\mathbb{R}$ and $g=0$. Let $(w(t),w_t(t);\theta(t),\theta_t(t))$ be the corresponding weak solution to \eqref{eq_lin}. Then
		there exists $\gamma,M>0$ such that 
		$$
		\|(w(t),w_t(t);\theta(t),\theta_t(t))\|_Y\leq Me^{-\gamma t}\|(w_0,w_1;\theta_0,\theta_1)\|_Y.
		$$
\end{lemma}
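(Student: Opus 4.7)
The plan is to exploit the one-way coupling in \eqref{eq_lin}: the $\theta$-equation is autonomous, while the $w$-equation is then driven by $\theta$ and $\theta_t$. Since the characteristic polynomial \eqref{eig0} is already observed to have all roots of strictly negative real part when $\mu,\zeta > 0$, the task reduces to showing those real parts stay bounded away from zero uniformly in the mode index $j$, and that the constants in the resulting decay estimates do not grow with $j$.

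First I would expand $(w,\theta)$ in the basis $\{\sqrt{2/L}\sin(j\pi x/L)\}_{j\ge 1}$ as in \eqref{approx}, reducing to the scalar system \eqref{odes} with $g=0$. For the $\theta_j$ block, set $c_j := \epsilon j^4\pi^4/L^4 + \kappa j^2\pi^2/L^2 \ge c_1 > 0$. The quadratic $(\ell^2/3)\lambda^2 + \zeta\lambda + c_j = 0$ has roots with real part at most $-\gamma_1 < 0$, uniformly in $j$: for $j$ with negative discriminant (all but finitely many) $\mathrm{Re}(\lambda) = -3\zeta/(2\ell^2)$, and for the remaining finitely many the larger real root satisfies $\lambda_+ = -2c_j/(\zeta + \sqrt{\zeta^2 - (4\ell^2/3)c_j}) \le -c_1/\zeta$. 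Hence $|\theta_j(t)|^2 + |\dot\theta_j(t)|^2 \le M_1 e^{-2\gamma_1 t}\big(|\theta_j(0)|^2 + |\dot\theta_j(0)|^2\big)$ with $M_1$ independent of $j$, which sums via Parseval to control the $\theta$-part of the $Y$-norm.

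Next, for the $w_j$ block, an identical Routh-Hurwitz analysis of $\lambda^2 + \mu\lambda + j^4\pi^4/L^4 = 0$ yields homogeneous decay at rate at least $\gamma_2 > 0$ uniformly in $j$. Since the source $-\beta\Upsilon\dot\theta_j - \eta\theta_j$ decays at rate $\gamma_1$ by the previous step, Duhamel's formula produces $(j\pi/L)^4|w_j(t)|^2 + |\dot w_j(t)|^2 \le M_2 e^{-2\gamma t}\big(|w_j(0)|^2 + |\dot w_j(0)|^2 + |\theta_j(0)|^2 + |\dot\theta_j(0)|^2\big)$ with $\gamma \le \min(\gamma_1,\gamma_2)$ (nudged slightly smaller if the two rates coincide, to absorb the resulting $te^{-\gamma t}$ factor) and $M_2$ independent of $j$. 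Summation in $j$ then yields the claimed $Y$-norm decay.

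The main obstacle will be verifying that the rate $\gamma$ and the multiplicative constants are genuinely uniform in $j$, particularly near ``resonant'' modes where the $\theta_j$- and $w_j$-decay rates coincide or where a characteristic discriminant vanishes, producing polynomial-in-$t$ prefactors that are exponentially subdominant but must be bounded carefully. A coordinate-free route that avoids this modal bookkeeping is a direct Lyapunov argument on the PDE: prove $\dot\Psi_\theta \le -\gamma_1\Psi_\theta$ for $\Psi_\theta := (\ell^2/6)\|\theta_t\|_0^2 + (\epsilon/2)\|\theta\|_2^2 + (\kappa/2)\|\theta\|_1^2 + \alpha\big[(\ell^2/3)(\theta_t,\theta)_0 + (\zeta/2)\|\theta\|_0^2\big]$ for small $\alpha>0$, using \eqref{spectral} for coercivity, then form an analogous $\Psi_w$ and absorb $-\beta\Upsilon\theta_t - \eta\theta$ via Young's inequality and Gronwall.
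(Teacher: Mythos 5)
Your route is the paper's route: decompose into the modes \eqref{approx}, observe from \eqref{eig0} that all modal eigenvalues have strictly negative real part when $\mu,\zeta>0$, get uniform exponential decay of the $\theta_j$ block, and then feed this into the damped $w_j$ equation by Duhamel/variation of parameters (this is exactly how the paper argues, in the text preceding the lemma and in the Remark that follows it, with the explicit formulas of Proposition \ref{lin} and the resonance case $\zeta=\ell^2\mu/3$ handled just as you indicate). Your instinct that the real content is uniformity in $j$ of the rate and the constants is also correct, and your Routh--Hurwitz bounds on the real parts are right.

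However, the two displayed modal estimates are false as literally written, and this is precisely the point you set out to be careful about. With $M_1$ independent of $j$, the unweighted bound $|\theta_j(t)|^2+|\dot\theta_j(t)|^2\le M_1e^{-2\gamma_1 t}\big(|\theta_j(0)|^2+|\dot\theta_j(0)|^2\big)$ fails at high frequency: for the underdamped oscillator with stiffness $c_j\sim \epsilon j^4\pi^4/L^4$, pure position data $\theta_j(0)=1$, $\dot\theta_j(0)=0$ produces $|\dot\theta_j(t)|\sim \sqrt{c_j}\,e^{-\gamma_1 t}$, so the constant necessarily blows up like $j^4$. The same objection applies to your $w_j$ estimate, whose right-hand side carries no weights on $w_j(0)$ or $\theta_j(0)$; moreover, even if it were true, the unweighted sum of $|\theta_j|^2+|\dot\theta_j|^2$ only controls $\|\theta\|_0^2+\|\theta_t\|_0^2$, not the $\|\theta\|_2^2$ needed for the $Y$-norm. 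The fix is standard and consistent with your summation step: state and prove the decay for the natural modal energies, $c_j|\theta_j(t)|^2+|\dot\theta_j(t)|^2$ and $(j\pi/L)^4|w_j(t)|^2+|\dot w_j(t)|^2$, with the same weights on the initial data; these are uniformly equivalent to the $Y$-weights since $\epsilon (j\pi/L)^4\le c_j\le(\epsilon+\kappa L^2/\pi^2)(j\pi/L)^4$, and then Parseval gives the lemma. Your alternative Lyapunov argument ($\Psi_\theta$ with the multiplier term $\alpha[(\ell^2/3)(\theta_t,\theta)_0+(\zeta/2)\|\theta\|_0^2]$, then $\Psi_w$ with the forcing absorbed by Young and Gronwall) is correct as sketched and sidesteps the modal bookkeeping entirely, since it yields $\frac{d}{dt}\Psi_\theta\le-\gamma_1\Psi_\theta$ with $\Psi_\theta$ equivalent to the $\theta$-part of the $Y$-norm; either repaired version completes the proof.
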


\begin{remark} The behavior of the decoupled linear problem is, in some sense, elementary, since it is clear that we have exponential stability. In particular, eschewing the series solution, we see that the damping present in the $\theta$ equation is sufficient to provide exponential stability for the $\theta$ dynamics. At this point, one can invoke the variation of parameters formula for the $w$ dynamics taking $\theta$ terms as give on the RHS. With damping present there and exponential decay of $\theta$, it is clear that exponential decay follows immediately for $w$. We point out that the behavior of the linear system is not a viable predictor of long time behavior for the nonlinear system of interest, owing to the fact that the cable-hanger nonlinearity itself introduces the nonlinear coupling into the problem. For this reason, the behavior of the nonlinear dynamics in \eqref{eq_sist1} remain highly non-trivial and of interest here; we demonstrate the existence of a global attractor for those coupled dynamics, in addition to investigating qualitative features of the dynamics via numerical experiments in this section.
\end{remark}

In this context for large damping, overdamped solutions may appear, which are in some sense not realistic. Hence, we focus on the realistic case of small damping, which  we describe in the next proposition, proved in Section \ref{proof-lin}.
\begin{proposition}\label{lin}
Let $\epsilon,\ell, L,\beta>0$, $\kappa\geq 0$, $\Upsilon\in[-\ell,\ell]$, $g,\eta\in\mathbb{R}$. Moreover, assume $\zeta\neq\frac{\ell^2}{3}\mu$, $0<\zeta<\frac{2\pi\ell}{L\sqrt{3}}\sqrt{\epsilon\frac{\pi^2}{L^2}+\kappa}$ and $0<\mu<\frac{2\pi^2}{L^2}$, then the solution of \eqref{eq_lin} admits the following representation
\begin{equation}\label{solution}
	\begin{split}
		&w(x,t)=\sqrt{\frac{2}{L}}\sum_{j=1}^{\infty} w_j(t) \hspace{1mm}\sin\bigg(\dfrac{j\pi x}{L}\bigg), \qquad \theta(x,t)=\sqrt{\frac{2}{L}}\sum_{j=1}^{\infty} \theta_j(t) \hspace{1mm}\sin\bigg(\dfrac{j\pi x}{L}\bigg),
	\end{split}
\end{equation}
where 
\begin{equation}\label{theta}
	\begin{split}
	w_j(t)=&e^{-\frac{\mu}{2}t}\bigg[c^j_1\sin\bigg(\frac{\omega_j}{2}t\bigg)+c^j_2\cos\bigg(\frac{\omega_j}{2} t\bigg)\bigg]+g\dfrac{L^4\sqrt{2L}(1-(-1)^j)}{j^5\pi^5}\\+&e^{-\frac{3\zeta}{2\ell^2}t}\bigg[A_j\sin\bigg(\frac{3}{2\ell^2}\gamma_j t\bigg)+B_j\cos\bigg(\frac{3}{2\ell^2}\gamma_j t\bigg)\bigg]\hspace{28mm} j\in\mathbb{N}_+,\\
	\theta_j(t)=&e^{-\frac{3\zeta}{2\ell^2}t}\bigg[\dfrac{1}{\gamma_j}\bigg(\dfrac{2\ell^2}{3}\theta^1_j+\zeta\theta_j^0\bigg)\sin\bigg(\frac{3}{2\ell^2}\gamma_j t\bigg)+\theta_j^0\cos\bigg(\frac{3}{2\ell^2}\gamma_j t\bigg)\bigg]\quad j\in\mathbb{N}_+,
	\end{split}
\end{equation}
$\omega_j:=\sqrt{\frac{4j^4\pi^4}{L^4}-\mu^2}$, $\gamma_j:=\sqrt{\frac{4j^2\pi^2\ell^2}{3L^2}\big(\epsilon\frac{j^2\pi^2}{L^2}+\kappa\big)-\zeta^2}$ and the coefficients $A_j,B_j,c_1^j,c_2^j \in\mathbb{R}$ are  computed in \eqref{A-B}--\eqref{c1-c2}.
The series converges uniformly in $I$ for all $t\geq0$.
\end{proposition}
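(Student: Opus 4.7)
The plan is to exploit two structural features of system \eqref{eq_lin}: the $\theta$-equation is autonomous (the coupling is one-way), and the Dirichlet--Navier boundary conditions provide the same sine eigenbasis $\{\sqrt{2/L}\sin(j\pi x/L)\}_{j\geq 1}$ for both unknowns. Projecting onto this basis reduces the PDE system to the decoupled family of scalar ODEs \eqref{odes}, and I would solve these mode-by-mode before verifying uniform convergence of the resulting series.

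For each fixed $j\geq 1$ the $\theta_j$-equation is homogeneous, second order, with characteristic polynomial $\tfrac{\ell^2}{3}\lambda^2+\zeta\lambda+\epsilon\tfrac{j^4\pi^4}{L^4}+\kappa\tfrac{j^2\pi^2}{L^2}=0$. Its discriminant $\zeta^2-\tfrac{4\ell^2}{3}\bigl(\epsilon\tfrac{j^4\pi^4}{L^4}+\kappa\tfrac{j^2\pi^2}{L^2}\bigr)$ is negative at $j=1$ by the damping hypothesis, and hence at every $j\geq 1$ since the subtracted quantity is monotone increasing in $j$. The complex conjugate roots $-\tfrac{3\zeta}{2\ell^2}\pm i\tfrac{3\gamma_j}{2\ell^2}$ together with $\theta_j(0)=\theta_j^0$, $\dot\theta_j(0)=\theta_j^1$ deliver the stated closed form for $\theta_j(t)$.

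Substituting this $\theta_j$ into the $w_j$-equation produces a linear non-homogeneous ODE whose homogeneous characteristic polynomial $\lambda^2+\mu\lambda+\tfrac{j^4\pi^4}{L^4}$ has negative discriminant for every $j\geq 1$ thanks to $\mu<\tfrac{2\pi^2}{L^2}$, yielding the oscillatory homogeneous part $e^{-\mu t/2}[c_1^j\sin(\omega_j t/2)+c_2^j\cos(\omega_j t/2)]$. The forcing splits into two pieces: a constant term $g\tfrac{\sqrt{2L}(1-(-1)^j)}{j\pi}$ with trivial steady-state particular solution obtained by dividing by the stiffness $j^4\pi^4/L^4$ (accounting for the second summand of \eqref{theta}); and a $\theta_j$-driven term of form $e^{-3\zeta t/(2\ell^2)}[\alpha_j\sin(\tfrac{3\gamma_j}{2\ell^2}t)+\beta_j\cos(\tfrac{3\gamma_j}{2\ell^2}t)]$. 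Because $\zeta\neq\tfrac{\ell^2}{3}\mu$, the forcing exponential rate $-\tfrac{3\zeta}{2\ell^2}$ differs from the homogeneous rate $-\tfrac{\mu}{2}$, so the forcing is non-resonant and the undetermined-coefficients ansatz $e^{-3\zeta t/(2\ell^2)}[A_j\sin(\tfrac{3\gamma_j}{2\ell^2}t)+B_j\cos(\tfrac{3\gamma_j}{2\ell^2}t)]$ reduces to a $2\times 2$ linear algebraic system with nonzero determinant, yielding explicit $A_j,B_j$. Enforcing the initial conditions $w_j(0)=w_j^0$, $\dot w_j(0)=w_j^1$ then fixes $c_1^j,c_2^j$ through another invertible $2\times 2$ system (its determinant is essentially $\omega_j/2$), completing the mode-wise construction.

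For uniform convergence of \eqref{solution} on $\overline I\times [0,\infty)$, I would use the initial-data regularity: $w_0,\theta_0\in (H^2\cap H^1_0)(I)$ gives Fourier coefficients $|w_j^0|,|\theta_j^0|\lesssim j^{-2}$ via Parseval and the eigenvalue asymptotics, while $w_1,\theta_1\in L^2(I)$ gives square-summability of $(w_j^1)$ and $(\theta_j^1)$. Combined with $|\sin|,|\cos|\leq 1$, the uniform bounds $e^{-\mu t/2},e^{-3\zeta t/(2\ell^2)}\leq 1$ for $t\geq 0$, the asymptotics $\omega_j,\gamma_j\sim j^2$, and explicit control of the inverses of the $2\times 2$ matrices above (whose determinants are bounded away from zero as $j\to\infty$), a direct bookkeeping yields $|w_j(t)|+|\theta_j(t)|\leq C j^{-2}$ uniformly in $t\geq 0$, so that the Weierstrass $M$-test closes the argument. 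The main obstacle is precisely the non-resonance check: one has to confirm that the exclusion $\zeta\neq\tfrac{\ell^2}{3}\mu$ is exactly what keeps the $2\times 2$ undetermined-coefficients system nonsingular at every mode $j$, since otherwise a $t$-multiplier would appear in the ansatz and spoil both the explicit formulas \eqref{A-B}--\eqref{c1-c2} and the uniform bounds of the convergence step.
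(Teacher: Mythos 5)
Your proposal is correct and follows essentially the same route as the paper: modal decomposition, explicit solution of the decoupled $\theta_j$-equations, undetermined coefficients for $w_j$ with the constant and $\theta_j$-driven forcings, and uniform convergence via the decay of the Fourier coefficients of the $(H^2\cap H^1_0)\times L^2$ data (the paper uses Cauchy--Schwarz on $\sum|\theta_j^1|/\gamma_j$, which is the same bookkeeping as your $j^{-2}$ bound). The only slight overstatement is the word ``exactly'' in your final remark: as the paper notes, resonance would require both $\zeta=\frac{\ell^2}{3}\mu$ and $\omega_j=\frac{3}{\ell^2}\gamma_j$, so the hypothesis $\zeta\neq\frac{\ell^2}{3}\mu$ is sufficient (not necessary) for the non-resonance you need, which is how you in fact use it.
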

Let us observe that for $\zeta=\frac{\ell^2}{3}\mu$ it is possible to have a different solution form. This occurs if also $\omega_j=\tfrac{3}{\ell^2}\gamma_j$; in this case, the solution $w$ slightly changes, i.e. the second line  in \eqref{theta} is multiplied by $t$. As indicated by Lemma \ref{lemma00}, nothing changes qualitatively, as exponential stability can still observed.

While the partially coupled system in \eqref{eq_lin} (and modally in \eqref{odes}) is straightforward to examine when both damping parameters are positive, $\mu,\zeta>0$, an interesting question arises when damping effects are omitted in the $\theta$ dynamics. In particular, in \cite{TNB} some experiments  are reported  on a model similar to the Tacoma Narrows bridge, showing that
the damping values are of the order of 1\%. We may assume that such value concerns only the vertical displacements, i.e. $\delta$, while for the torsional damping $\zeta$ we have no viable information. One may thus assume $\delta=\zeta$ as in \cite{mckenna,Plaut}, or more conservatively,  one can put $\zeta=0$. In the latter case from a mathematical view point, however, the flow provides some dissipative effects. This has been noted in several places, when potential flow interacts with elastic deformations \cite{chla,thinair,holawe,attractors}. In the context of the present flow description in \eqref{aero}, we similarly observe the contribution of dissipation. In particular, we see that we can take both intrinsic, {\em structural} damping parameters to be zero in the dynamics---$\delta=\zeta=0$. From the flow, then, we obtain $\theta$ damping in the $w$ equation: 
$$w_{tt}+\beta \, w_t+w_{xxxx}=g-\beta \Upsilon\theta_t-\eta\theta ~~~\text{in }I_T.$$ An interesting question, therefore, is the extent to which this dissipation can be harvested for the $\theta$ dynamics, through the $w$ dynamics, when there is no imposed structural damping. The next lemma answers this question in the negative, which is contrary to the outcome in \cite{attractors}. We omit the proof since it follows as for Proposition \ref{lin}. 

\begin{lemma}
		Let $ \delta=\zeta=0$, $\epsilon,\ell, L,\beta>0$, $\kappa\geq 0$, $\Upsilon\in[-\ell,\ell]$, $g, \eta\in\mathbb{R}$. Let $w_j(t)$ and $\theta_j(t)$ for $j \in \mathbb N_+$ correspond to the solutions to \eqref{odes}. Then the Fourier modes obey:
		\begin{equation*}\label{theta2}
			\begin{split}
				w_j(t)=&e^{-\frac{\beta}{2}t}\bigg[c^j_1\sin\bigg(\frac{\omega_j}{2}t\bigg)+c^j_2\cos\bigg(\frac{\omega_j}{2} t\bigg)\bigg]+g\dfrac{L^4\sqrt{2L}(1-(-1)^j)}{j^5\pi^5}\\+&A_j\sin(\gamma_jt)+B_j\cos(\gamma_jt)\hspace{3mm} j\in\mathbb{N}_+,\\
				\theta_j(t)=&\dfrac{\theta^1_j}{\gamma_j}\sin(\gamma_j t)+\theta_j^0\cos(\gamma_jt)\quad j\in\mathbb{N}_+.
			\end{split}
		\end{equation*}
	$\omega_j:=\sqrt{\frac{4j^4\pi^4}{L^4}-\mu^2}$, $\gamma_j:=\frac{\sqrt{3}j\pi}{\ell L}\sqrt{\epsilon\frac{j^2\pi^2}{L^2}+\kappa}$ and some coefficients $A_j,B_j,c_1^j,c_2^j \in\mathbb{R}$. The series converges uniformly in $I$ for all $t\geq0$.
\end{lemma}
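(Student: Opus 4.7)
The plan is to mimic the proof of Proposition \ref{lin} but with $\delta=\zeta=0$ and hence $\mu=\beta$. With $\zeta=0$, the $\theta_j$ equation in \eqref{odes} decouples entirely and becomes a standalone undamped harmonic oscillator
\[
\tfrac{\ell^2}{3}\ddot\theta_j+\bigl(\epsilon\tfrac{j^4\pi^4}{L^4}+\kappa\tfrac{j^2\pi^2}{L^2}\bigr)\theta_j=0,
\]
with natural frequency $\gamma_j=\tfrac{\sqrt{3}j\pi}{\ell L}\sqrt{\epsilon j^2\pi^2/L^2+\kappa}$; solving directly against the initial data $(\theta_j^0,\theta_j^1)$ yields the claimed formula for $\theta_j(t)$.

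Next, I substitute this explicit $\theta_j(t)$ into the right-hand side of the $w_j$ equation, producing the forced damped oscillator
\[
\ddot w_j+\beta\dot w_j+\tfrac{j^4\pi^4}{L^4}w_j=g\,\tfrac{\sqrt{2L}(1-(-1)^j)}{j\pi}+\alpha_j\sin(\gamma_j t)+\rho_j\cos(\gamma_j t),
\]
where $\alpha_j=\beta\Upsilon\gamma_j\theta_j^0-\eta\theta_j^1/\gamma_j$ and $\rho_j=-\beta\Upsilon\theta_j^1-\eta\theta_j^0$ come from expanding $-\beta\Upsilon\dot\theta_j-\eta\theta_j$. Under the implicit assumption $0<\beta<2\pi^2/L^2$, the discriminant $4j^4\pi^4/L^4-\mu^2$ is strictly positive for every $j\geq 1$, so the homogeneous solution is $e^{-\beta t/2}[c_1^j\sin(\omega_j t/2)+c_2^j\cos(\omega_j t/2)]$. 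The constant forcing contributes the static particular solution $g\tfrac{L^4\sqrt{2L}(1-(-1)^j)}{j^5\pi^5}$, and undetermined coefficients produces the sinusoidal particular solution $A_j\sin(\gamma_j t)+B_j\cos(\gamma_j t)$ by solving the $2\times 2$ linear system
\[
\bigl(\tfrac{j^4\pi^4}{L^4}-\gamma_j^2\bigr)A_j-\beta\gamma_j B_j=\alpha_j,\qquad \bigl(\tfrac{j^4\pi^4}{L^4}-\gamma_j^2\bigr)B_j+\beta\gamma_j A_j=\rho_j,
\]
whose determinant $\bigl(j^4\pi^4/L^4-\gamma_j^2\bigr)^2+(\beta\gamma_j)^2$ is strictly positive for every $j$ since $\beta>0$; unlike the degenerate case flagged after Proposition \ref{lin}, no resonance arises here because the exponentially decaying homogeneous part and the purely oscillatory particular part are always linearly independent when $\beta>0$. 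Finally, $c_1^j$ and $c_2^j$ are fixed by matching $w_j(0)=w_j^0$ and $\dot w_j(0)=w_j^1$.

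For the uniform convergence of \eqref{solution} in $\overline I$ for all $t\geq 0$, I invoke $(w_0,w_1;\theta_0,\theta_1)\in Y$, which by Parseval yields $\{j^2 w_j^0\},\{j^2\theta_j^0\},\{w_j^1\},\{\theta_j^1\}\in\ell^2$. The determinant above grows like $j^8$ while $|\alpha_j|\lesssim j^2|\theta_j^0|+j^{-2}|\theta_j^1|$ and $|\rho_j|\lesssim|\theta_j^0|+|\theta_j^1|$, so a short Cauchy--Schwarz estimate shows $\sum_j\bigl(|A_j|+|B_j|+|c_1^j|+|c_2^j|\bigr)<\infty$; combined with the uniform-in-$t$ bounds on the sinusoidal and exponentially decaying prefactors, this gives an absolutely and uniformly convergent majorant for both series. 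The main obstacle is purely this algebraic bookkeeping of $j$-dependence—no new ideas beyond Proposition \ref{lin} are needed—and it is precisely the absence of the factor $e^{-3\zeta t/(2\ell^2)}$ in front of the sinusoidal part of $w_j$ that reflects the failure of exponential stability highlighted in the paragraph preceding the lemma.
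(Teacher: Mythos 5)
Your proof is correct and follows essentially the same route as the paper, which simply states that the proof ``follows as for Proposition \ref{lin}'': solve the undamped $\theta_j$ oscillator, substitute into the damped $w_j$ equation, construct the particular solution by undetermined coefficients (no resonance, since the characteristic roots have real part $-\beta/2\neq 0$ while the forcing is purely oscillatory), fix $c_1^j,c_2^j$ from the initial data, and verify uniform convergence by Cauchy--Schwarz using the regularity of the initial data. Two harmless remarks: the reality of $\omega_j$ requires the same implicit smallness assumption on $\mu=\beta$ as in Proposition \ref{lin}, which you correctly flag, and in the borderline case $3\epsilon=\ell^2$ the determinant grows only like $j^4$ rather than $j^8$, but the numerators degrade correspondingly, so your summability conclusion is unaffected.
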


Let us note that in the case when no damping is imposed in the $\theta$ dynamics, so $\zeta=0$ in \eqref{eq_lin}, the eigenvalues corresponding to $\theta_j(t)$ solutions  are pure imaginary 
$$\lambda=\pm i\frac{\sqrt{3}j\pi}{\ell L} \sqrt{\epsilon\frac{j^2\pi^2}{L^2}+\kappa}.$$ 
It is clear, then that the resulting system is Lyapunov stable, but not asymptotically stable, owing to the persistence of $\theta_j$ eigenfunctions which need not decay in time. We also observe, however, that resonance is not possible---which is to say that the damping in the $w_j$ modes is enough to offset the RHS contributions from $g$ and from the flow terms. 

\subsection{Nonlinear modal analysis: a case of study}\label{nonl}
In this section we rewrite \eqref{eq_sist1}, considering the mechanical parameters involved in a real suspension bridge structure.
More precisely, we introduce the (constant) mass linear density of the deck $M$ and we study
\begin{equation*}
	\left\{\begin{array}{ll}
		Mw_{tt}+(\delta+\beta) w_t+D w_{xxxx}+\big(P-S\int_0^Lw_{x}^2\big)w_{xx}+\big(f(w,\theta)\big)_x=Mg-\beta\Upsilon\theta_t-\eta\theta &\text{in } I_T\\
		\frac{M\ell^2}{3}\theta_{tt}+\zeta\, \theta_t+\epsilon\theta_{xxxx}-\kappa\theta_{xx} +\big(\overline{ f}(w,\theta)\big)_x=0\,&\text{in }  I_T\\
		w=w_{xx}=\theta=\theta_{xx}=0 &\hspace{-20mm}\text{on }\{0,L\}\times(0,T)\\
		w(x,0)=w_0(x),\quad\,\,\theta(x,0)=\theta_0(x) &\text{on }\overline I\\
		w_t(x,0)=w_1(x),\quad\theta_t(x,0)=\theta_1(x) &\text{on }\overline I,
	\end{array}\right.
	\label{eq_sist2}
\end{equation*}
where $D=E\mathcal{I}$, $\epsilon=EJ$, $\kappa=GK$, being $E$ the Young modulus, $\mathcal{I}$ the moment of inertia, $G$ the shear constant, $K$ the torsional constant and $J$ the warping constant of the deck. With an abuse of notation, we take, $g$($=9.8\frac{m}{s^2}$) as the  acceleration due to gravity, so that the constant load $Mg$ is the weight of the deck.
Concerning the cable nonlinearity, see Section \ref{cable-nonlinearity}; we take $a=\frac{Mg}{2H}$, $b=\frac{A_cE_c}{\mathcal{L}_0}$ and $c=H$ where $H$ is the cable horizontal tension, $E_c$ is the Young modulus, $A_c$ the sectional area and $\mathcal{L}_0$ the length of the two cables. In the Woinowsky-Krieger nonlinearity, $P$ represents the deck tension in the rest position and $S=\frac{AE}{2L}$ with $A$ the deck cross section area, see e.g., \cite{narciso}.

From the piston theoretic flow approximation \cite{lamorte2,Mchugh} we have considered %$\beta=\frac{2\gamma}{\sigma}$,where $\gamma$ is the specific heat ratio of the fluid, $\sigma$ is the sound velocity in the freastream fluid  
$\eta=\beta\mathcal{U}$ where $\mathcal{U}$ is the freestream fluid velocity and $\beta\in\mathbb{R}_+$ depends on the properties of the fluid. In line with \cite{HHWW,vedeneev} we take $\beta$ varying in the range $10^{-5}\leq \beta\leq 10^{-2}$ (without specifying the unit of measure); about the wind velocity we consider the range $|\mathcal{U}|\leq 30m/s$.

We fix the structural parameters, considering the case of the Tacoma Narrows bridge (TNB), for which there is a vast literature; the mechanical parameters are summarized in Table \ref{tab mec}, see \cite{TNB,crfaga,fa1,Plaut}.

\begin{table}[h]
	\begin{center}
		\begin{tabular}{*{3}{c}}
			\hline
			$E$&$210\hspace{1mm}000MPa$& Young modulus of the deck (steel)\vspace{-3mm}\\\vspace{-3mm}
			$E_c$&$185\hspace{1mm}000MPa$& Young modulus of the cables (steel)\\\vspace{-3mm}
			$G$&$81\hspace{1mm}000MPa$& Shear modulus of the deck (steel)\\\vspace{-3mm}
			$L$&$853.44m$& Length of the main span\\\vspace{-3mm}
			$\ell$&$6m$& Half width of the deck\\\vspace{-3mm}
			$f$&$70.71m$& Sag of the cable, see Fig. \ref{sketch}\\\vspace{-3mm}
			$\mathcal{I}$&$0.154m^4$& Moment of inertia of the deck cross section\\\vspace{-3mm}
			$K$&$6.07\cdot10^{-6}m^4$& Torsional constant of the deck\\\vspace{-3mm}
			$J$&$5.44m^6$& Warping constant of the deck\\\vspace{-3mm}
			$A$&$\approx 1.85m^2$& Area of the deck cross section\\\vspace{-3mm}
			$A_c$&$0.1228m^2$& Area of the cables section\\\vspace{-3mm}
			$M$&$7198kg/m$&Mass linear density of the deck \\\vspace{-3mm}
			$H$&$45\hspace{1mm}413kN$& Horizontal tension in the cables, $H=\frac{MgL^2}{16f}$\\\vspace{-0mm}
			$\mathcal{L}_0$&$868.815m$& Initial length of the cables, see \eqref{xii}\\
			\hline
		\end{tabular}
	\end{center}
\vspace{-4mm}
	\caption{TNB mechanical features.}
	\label{tab mec}
\end{table}
For the modal approximation \eqref{approx}, let us recall some important facts about TNB.
When we speak about a mode like $\sin(\frac{k\pi}{L}x)$, we refer to a motion with $k-1$ nodes, in which the latter are the zeros of the sine function in $(0, L)$. Some meaningful witnesses during the Tacoma Narrows bridge collapse led our modeling choices. From \cite[p.28]{TNB} we know that \textit{“seven different motions have been definitely identified on the main span of the bridge”}. The morning of the failure, Prof. Farquharson described a torsional motion like $\sin(\frac{2\pi}{L}x)$, writing \cite[V-2]{TNB} \textit{“a violent change in the motion was noted. \dots the motions, which a moment before had involved a number of waves (nine or ten) had shifted almost instantly to two \dots the node was at the center of the main span and the structure was subjected
	to a violent torsional action about this point”}.
From \cite{TNB} we learn that in the TNB case, oscillations with
more than 10 nodes on the main span were never seen. Hence, we consider the first 10 transverse vertical modes
and the first 4 torsional modes; this is a good compromise between limiting computational burden and
focusing on the real phenomena viewed in the TNB disaster.
\begin{definition}
	We call $\overline{w}_j(t):=\sqrt{\frac{2}{L}}w_j(t)$ the \textbf{jth transverse vertical mode} and $\overline{\theta}_j(t):=\sqrt{\frac{2}{L}}\theta_j(t)$ the \textbf{jth torsional mode}.
\end{definition}
Consequently, $\overline{w}_j^0:=\sqrt{\frac{2}{L}}w_j^0$, $\overline{w}_j^1:=\sqrt{\frac{2}{L}}w_j^1$ are respectively the initial amplitude and velocity of transverse vertical oscillation, similarly for the $\theta$ initial conditions in \eqref{ics}.
According to the observations during the TNB collapse we fix the initial condition exciting the 9th transverse vertical mode, i.e. $\overline w_9^0=3m$, and we apply an initial condition 10$^{-3}$ smaller on all the other components, i.e.
\begin{equation*}
	\begin{split}
		&\overline{w}_{j}^0=10^{-3}\cdot \overline{w}_{9}^0,\hspace{3mm}\forall j\neq 9,\qquad\overline{\theta}_{j}^0=\overline{w}_{j}^1=\overline{\theta}_{j}^1=10^{-3}\cdot \overline{w}_{9}^0,\hspace{3mm}\forall j.
	\end{split}
	\label{ic22}
\end{equation*}
It is not the purpose of this analysis to show how the results are affected by the choice of the excited mode at the initial time, as many tests in this direction (without the wind) are performed in \cite{crfaga,fa1}. Here we are interested in observing how the torsional modes,  in particular the 2nd, behave  with respect to the presence (or lack thereof) of the dampings, the Woinowsky-Krieger nonlinearity, and the flow parameters. 

Let us consider at first the system undamped, i.e. $\delta=\zeta=0$, without prestressing of the deck as in the real TNB, i.e. $P=S=0$, and no wind in the system, i.e. $\beta=0$. 
In Figure \ref{num1} we plot the $\overline\theta_j(t)$ coefficients with $j=1,\dots,4$, observing that they continue to oscillate around their initial datum. For brevity we do not plot the transverse vertical modes, since for the bridges the most dangerous are the torsional ones, see \cite{Gazz}. 
\begin{figure}[h]
	\centering
	\includegraphics[width=17cm]{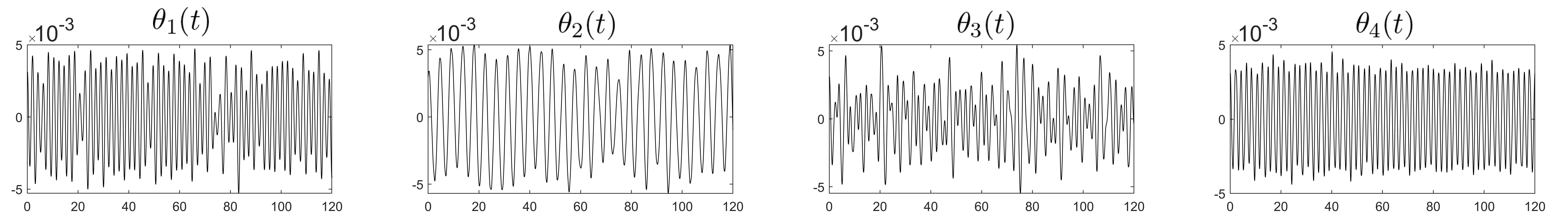}
	\vspace{-5mm}
	\caption{Plots $\overline \theta_j(t)$ ($j=1,\dots,4$) on $[0,120s]$ with $\delta=\zeta=0$, $P=S=0$ and $\beta=0$.}
	\label{num1}
\end{figure} 

On the other hand in Figure \ref{num2} we plot the $\overline\theta_j(t)$ coefficients with $j=1,\dots,4$ in the extremal case $\beta=10^{-2}$ where the wind is very strong $\mathcal{U}=30m/s$.
\begin{figure}[h]
	\centering
	\includegraphics[width=17cm]{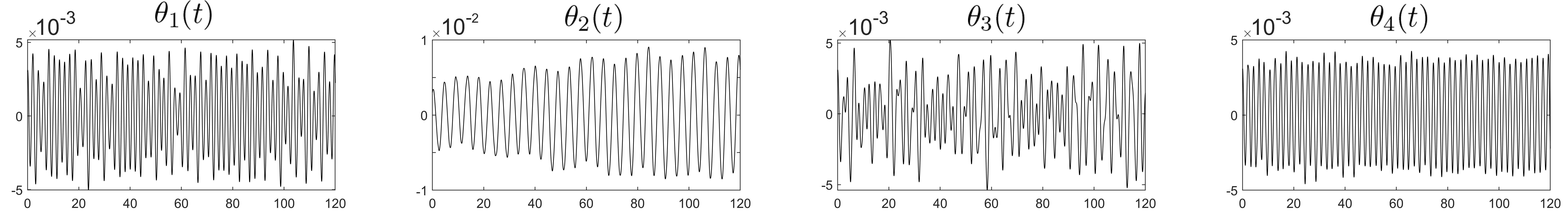}
	\vspace{-5mm}
	\caption{Plots $\overline\theta_j(t)$ ($j=1,\dots,4$) on $[0,120s]$ with $\delta=\zeta=0$, $P=S=0$ and $\beta=10^{-2}$, $\mathcal U=30m/s$.}
	\label{num2}
\end{figure} 
In this case the 2nd torsional mode sees amplitude increase, predicting a possible uncontrolled growth beyond the time-lapse simulation; the other three torsional modes oscillate more or less around their initial datum.  Here and in all the simulations in this section, we put $\Upsilon=\ell$ observing that the results are not  meaningly affected if we take $\Upsilon=0,-\ell$ or $\mathcal U$ with opposite sign; indeed, the model is essentially symmetric with respect to the sign of $\mathcal{U}$. 

If we introduce into the system the nonlinear contribution due to the effect of stretching on bending, i.e. $P=0$ and $S>0$, in absence of wind we obtain a behavior which is qualitatively similar to that reported in Figure \ref{num1}; this is consistent with \cite{HHWW,holawe}. Including the effect of the wind, we observe that the Woinowsky-Krieger nonlinearity acts, as expected, in favor of stability, see Figure \ref{num3}; there the $\theta_2(t)$ growth is less marked than in Figure \ref{num2}. This stabilizing behavior is also observed for a beam model in \cite{HHWW}, where the ``strength" of the nonlinearity prohibits arbitrary growth of the elastic displacements.

\begin{figure}[h]
	\centering
	\includegraphics[width=17cm]{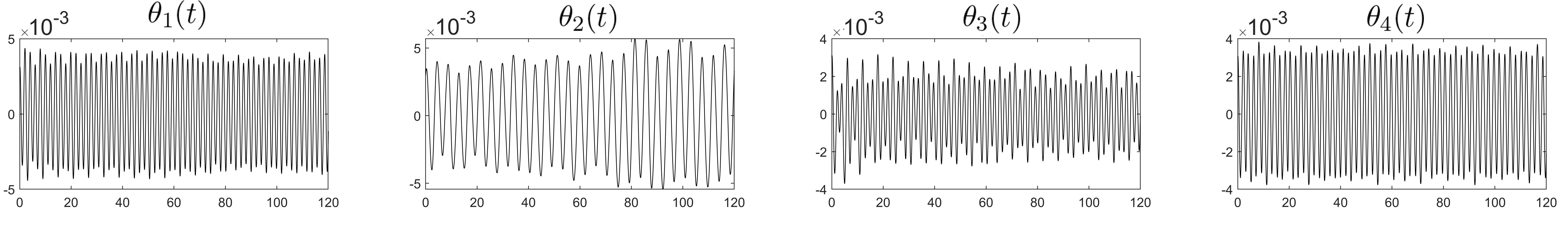}
	\vspace{-4mm}
	\caption{Plots $\overline\theta_j(t)$ ($j=1,\dots,4$) on $[0,120s]$ with $\delta=\zeta=0$, $P=0$, $S=\frac{EA}{2L}$ and $\beta=10^{-2}$, $\mathcal U=30m/s$.}
	\label{num3}
\end{figure} 

If we include in the system the damping effects in both model components, i.e. $\delta=\zeta=0.01$ \cite{TNB,mckenna}, we obtain the expected damped oscillations for torsional modes, although the wind parameters play an important role, see Figure \ref{num4}.
\begin{figure}[h]
	\centering
	\includegraphics[width=17cm]{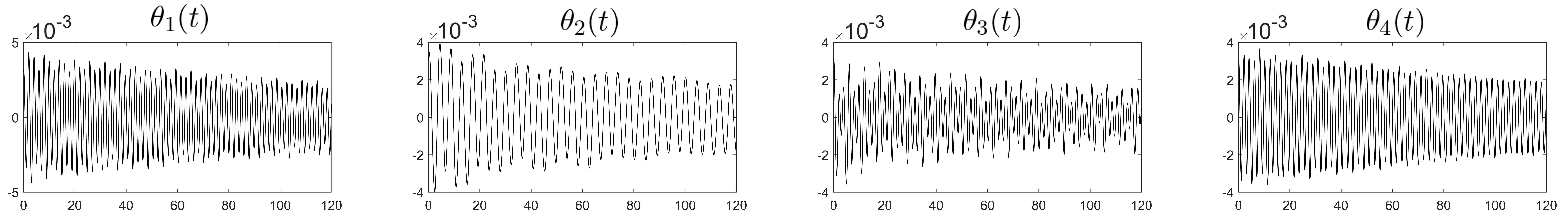}
	\vspace{-3mm}
	\caption{Plots $\overline\theta_j(t)$ ($j=1,\dots,4$) on $[0,120s]$ with $\delta=\zeta=0.01$, $P=0$, $S=\frac{EA}{2L}$ and $\beta=10^{-2}$, $\mathcal U=30m/s$.}
	\label{num4}
\end{figure} 
If we damp only the $w$ component, i.e.  $\delta=0.01$ and $\zeta=0$, we qualitatively obtain torsional oscillations around the initial datum, as in the unforced (and undamped) situation, see Figure \ref{num1}.

The plots showed in this section are representative of an extremal wind condition $\beta=10^{-2}$ and $\mathcal{U}=30m/s$; for lower values of $\beta$ and $\mathcal{U}$ in the ranges declared at the beginning of this section we obtain intermediate situations which qualitatively fall between those  presented here in presence or absence of wind. 
%\begin{figure}[h]
%	\centering
%	\includegraphics[width=17cm]{"num5.jpg"}
%	\caption{Plots $\overline\theta_j(t)$ ($j=1,\dots,4$) on $[0,120s]$ with $\delta=0.01$, $\zeta=0$, $P=0$, $S=\frac{EA}{2L}$ and $\beta=10^{-2}$, $\mathcal U=30m/s$.}
%	\label{num5}
%\end{figure} 

\section{Proofs of stated theorems}\label{proofs}

We begin with two technical lemmas concerning the nonlinear structure of the cable nonlinearity. We then move on to the proof of Theorem \ref{teo1} concerning weak and strong well-posedness. Subsequently, we construct an absorbing ball for the dynamical system $(S_t,Y)$ associated with weak solutions in Proposition \ref{prop-absorbing}; this is done through Lyapunov methods. Finally, we show that the dynamical system $(S_t,Y)$ is {\em quasi-stable} on the absorbing ball, which yields Theorem \ref{teo-main}. 
\subsection{Technical lemmas on the cable nonlinearity}\label{tech_lemma}
We give here some technical lemmas on the cable nonlinearity that we use throughout the paper.
\begin{lemma}\label{lemma1}
	Let ~$\Xi(\cdot)$, $\mathcal{L}(\cdot)$ and $\Pi(\cdot)$ be defined (respectively) as  in \eqref{xii}$_1$, \eqref{xii}$_2$ and \eqref{nonlinear}.
	Then the functional $\Xi:C^1(\overline I)\rightarrow C^0(\overline I)$ is locally Lipschitz, and there exists $C>0$, depending on the cable parameters, such that
	\begin{equation}\label{L}
		|\mathcal{L}(v)-\mathcal{L}(z)|\leq \|u_x-z_x\|_{L^1(I)}\qquad \forall v,z\in W^{1,1}(I)
	\end{equation}
	\begin{equation}\label{Pi}
		|\Pi(v)-\Pi(z)|\leq C\|v_x-z_x\|_{L^1(I)}\qquad \forall v,z\in W^{1,1}(I).
	\end{equation}
\end{lemma}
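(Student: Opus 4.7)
The plan is to exploit the $1$-Lipschitz property of $y\mapsto\sqrt{1+y^2}$ and reduce everything to one pointwise inequality. First I would record that for all $y_1,y_2\in\mathbb{R}$, $|\sqrt{1+y_1^2}-\sqrt{1+y_2^2}|\leq|y_1-y_2|$, since the derivative $y/\sqrt{1+y^2}$ has modulus at most one. Applied at each $x$ with $y_1=v_x(x)+s_x(x)$ and $y_2=z_x(x)+s_x(x)$, this gives the pointwise bound $|\Xi(v)(x)-\Xi(z)(x)|\leq|v_x(x)-z_x(x)|$. Taking the supremum in $x$ yields $\|\Xi(v)-\Xi(z)\|_{C^0(\overline I)}\leq\|v-z\|_{C^1(\overline I)}$, which is the (indeed global) Lipschitz continuity of $\Xi:C^1(\overline I)\to C^0(\overline I)$. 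Integrating the same pointwise inequality over $I$ proves \eqref{L} directly.

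For \eqref{Pi} I would split $\Pi(v)-\Pi(z)$ into its quadratic and linear parts and estimate each separately. Factoring a difference of squares,
$\Pi(v)-\Pi(z)=\tfrac{b}{2}\bigl(\mathcal{L}(v)-\mathcal{L}(z)\bigr)\bigl(\mathcal{L}(v)+\mathcal{L}(z)-2\mathcal{L}_0\bigr)+c\int_0^\pi\xi_0\bigl(\Xi(v)-\Xi(z)\bigr)\,dx.$
The linear piece is bounded by $c\|\xi_0\|_{L^\infty(I)}\|v_x-z_x\|_{L^1(I)}$, with $\|\xi_0\|_{L^\infty}$ depending only on the parabolic cable shape $s(x)$. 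For the quadratic piece, \eqref{L} handles the first factor, while the second factor is estimated by invoking \eqref{L} twice (with $z=0$ and with $v=0$, respectively) and using $\mathcal{L}(0)=\mathcal{L}_0$: $|\mathcal{L}(v)+\mathcal{L}(z)-2\mathcal{L}_0|\leq\|v_x\|_{L^1(I)}+\|z_x\|_{L^1(I)}$. Combining the two pieces yields \eqref{Pi}.

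The one delicate point is that $\Pi$ is genuinely quadratic in $\mathcal{L}(v)-\mathcal{L}_0$, so the constant $C$ in \eqref{Pi} must, in principle, also depend on the $W^{1,1}$-norms of $v$ and $z$. I read the lemma as asserting a local Lipschitz bound on bounded subsets of $W^{1,1}(I)$, with this dependence absorbed into $C$. Everywhere the estimate is subsequently applied, $v$ and $z$ will lie in a bounded set of $(H^2\cap H^1_0)(I)$, which embeds continuously into $W^{1,1}(I)$, so the dependence is harmless for the later arguments. I expect no real analytic obstacle here; the main task is simply to carry out the bookkeeping carefully and keep track of which constants legitimately depend only on the cable parameters.
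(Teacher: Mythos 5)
Your argument is essentially the paper's proof: the same pointwise bound $|\Xi(v)-\Xi(z)|\leq|v_x-z_x|$ (the paper gets it via the mean value theorem, you via the $1$-Lipschitz property of $y\mapsto\sqrt{1+y^2}$), integration to obtain \eqref{L}, and the same difference-of-squares decomposition with the $\xi_0$-term bounded by $c\max_{\overline I}|\xi_0|\,\|v_x-z_x\|_{L^1(I)}$ to obtain \eqref{Pi}. Your closing caveat is also accurate: the paper's final step silently absorbs the factor $|\mathcal{L}(v)+\mathcal{L}(z)-2\mathcal{L}_0|$ into $C$, so \eqref{Pi} is really a local Lipschitz estimate with $C$ depending on the $W^{1,1}$-bounds of $v,z$, which is harmless since the lemma is only ever invoked on bounded sets of $(H^2\cap H^1_0)(I)$.
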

\begin{proof}
	Given $v,z\in C^1[0,\pi]$, we apply the Lagrange theorem, so that there exists $\varrho\in(u_x,z_x)$ such that 
	\begin{equation*}
		|\Xi(u)-\Xi(z)|=\frac{|\varrho||u_x-z_x|}{\sqrt{1+\varrho^2}}\leq |u_x-z_x|.
	\end{equation*}
	Since 
	$
	\mathcal{L}(u)-\mathcal{L}(z)=\int_0^\pi \big(\Xi(u)-\Xi(z)\big)\,dx
	$
	\eqref{L} follows.
	
	Recalling that $\xi_0$ is bounded, we infer the existence of $C>0$ such that
	\begin{equation*}
		\begin{split}
			|\Pi(v)-\Pi(z)|&=\bigg|\dfrac{b}{2}\big[\mathcal{L}\big(u\big)-\mathcal{L}_0\big]^2-\dfrac{b}{2}\big[\mathcal{L}\big(z\big)-\mathcal{L}_0\big]^2+ c\int_0^\pi \xi_0[\Xi(u)-\Xi(z)]\,dx\bigg|\\&\leq \dfrac{b}{2}\Big|\big[\mathcal{L}\big(u\big)-\mathcal{L}_0\big]^2-\big[\mathcal{L}\big(z\big)-\mathcal{L}_0\big]^2\Big|+c\max_{x\in \overline I}|\xi_0|\int_0^\pi \big|\Xi\big(u\big)-\Xi\big(z\big)\big|\,dx\\&\leq \dfrac{b}{2}\big|\mathcal{L}(u)-\mathcal{L}(z)\big|\big|\mathcal{L}(u)+\mathcal{L}(z)-2\mathcal{L}_0\big|+C\|u_x-z_x\|_{L^1(I)}\\
			&\leq C\|u_x-z_x\|_{L^1(I)}.
		\end{split}
	\end{equation*}
\end{proof}
\begin{lemma}\label{lemmah}
	Let $h(\cdot)$ and $\Pi(\cdot)$ be respectively as in \eqref{xii}$_3$ and \eqref{Pi}, then there exist $C_c,\overline C_c,c_c,\overline c_c>0$, depending on the cable parameters,  such that
	\begin{equation}\label{h-weak}
		\big(h(u),u_x\big)_0\leq -\Pi(u)+C_c\|u_x\|_{L^1(I)}+\overline C_c\qquad\forall u\in W^{1,1}(I)
	\end{equation}
	and
	\begin{equation}\label{h-L2}
		\|h(u)\|_{0}^2\leq c_c\|u_x\|_{L^1(I)}^2+\overline c_c\qquad\forall u\in W^{1,1}(I).
	\end{equation}
\end{lemma}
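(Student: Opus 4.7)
The plan is to dispatch \eqref{h-L2} first via a pointwise bound, and then establish \eqref{h-weak} through an algebraic identity that isolates $-\Pi(u)$ from remainder terms controlled by \eqref{L}.

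For \eqref{h-L2}, I would read off from \eqref{xii}$_3$ that $|h(u)| \leq [b|\mathcal{L}(u)-\mathcal{L}_0| + c\|\xi_0\|_\infty]\cdot|u_x+s_x|/\Xi(u)$; since $\Xi(u)^2 = 1+(u_x+s_x)^2$ forces $|u_x+s_x|/\Xi(u) \leq 1$, and since \eqref{L} gives $|\mathcal{L}(u)-\mathcal{L}_0| \leq \|u_x\|_{L^1(I)}$, the pointwise bound $|h(u)| \leq b\|u_x\|_{L^1(I)} + c\|\xi_0\|_\infty$ follows. Squaring via $(a+b)^2 \leq 2a^2+2b^2$ and integrating over $I$ produces \eqref{h-L2} directly with explicit $c_c, \overline{c}_c$.

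For \eqref{h-weak}, the central trick is the algebraic identity
\[
\frac{u_x(u_x+s_x)}{\Xi(u)} = \Xi(u) - \frac{1}{\Xi(u)} - \frac{s_x(u_x+s_x)}{\Xi(u)},
\]
obtained by writing $u_x = (u_x+s_x)-s_x$ and using $(u_x+s_x)^2 = \Xi(u)^2-1$. Plugging into $(h(u),u_x)_0 = \int_0^\pi [b(\mathcal{L}_0-\mathcal{L}(u)) - c\xi_0]\,\tfrac{u_x(u_x+s_x)}{\Xi(u)}\,dx$ splits the integral into three pieces. The leading piece contributes $b(\mathcal{L}_0-\mathcal{L}(u))\mathcal{L}(u) - c\int_0^\pi\xi_0\Xi(u)\,dx$, which after expanding $(\mathcal{L}_0-\mathcal{L}(u))\mathcal{L}(u) = -(\mathcal{L}(u)-\mathcal{L}_0)^2 - \mathcal{L}_0(\mathcal{L}(u)-\mathcal{L}_0)$ and comparing with \eqref{nonlinear}, matches exactly $-\Pi(u) - \tfrac{b}{2}(\mathcal{L}(u)-\mathcal{L}_0)^2 - b\mathcal{L}_0(\mathcal{L}(u)-\mathcal{L}_0) - c\int_0^\pi\xi_0^2\,dx$.

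The purely negative quadratic $-\tfrac{b}{2}(\mathcal{L}(u)-\mathcal{L}_0)^2$ is discarded for the upper bound, the cross term $b\mathcal{L}_0|\mathcal{L}(u)-\mathcal{L}_0|$ is controlled by $b\mathcal{L}_0\|u_x\|_{L^1(I)}$ via \eqref{L}, and the two remaining remainder integrals (with factors $1/\Xi(u)$ and $s_x(u_x+s_x)/\Xi(u)$) are estimated using $\Xi(u) \geq 1$ and $|u_x+s_x|/\Xi(u) \leq 1$: their integrands are bounded by $|b(\mathcal{L}_0-\mathcal{L}(u))-c\xi_0|\cdot(1+|s_x|)$, which through \eqref{L} contributes at most $C(\|u_x\|_{L^1(I)}+1)$. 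Collecting constants gives \eqref{h-weak} with explicit $C_c, \overline{C}_c$ depending on $b,c,\mathcal{L}_0,\|s_x\|_{L^1(I)},\|\xi_0\|_\infty$. The main obstacle I anticipate is purely the algebraic bookkeeping to cleanly recover $-\Pi(u)$ from the leading $\Xi(u)$ piece; once that identification is made, every residual term is manifestly linear in $\|u_x\|_{L^1(I)}$ or constant, and both inequalities fall out at once.
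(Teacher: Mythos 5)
Your proposal is correct and follows essentially the same route as the paper: the same splitting $u_x=(u_x+s_x)-s_x$ together with $(u_x+s_x)^2=\Xi(u)^2-1$ to isolate $-\Pi(u)$ from the $\Xi(u)$-piece, the same bounds $1/\Xi(u)\leq 1$, $|u_x+s_x|/\Xi(u)\leq 1$ and \eqref{L} for the remainders, and the same pointwise/Young argument for \eqref{h-L2}. The only cosmetic difference is that the paper absorbs the cross term $-b\mathcal{L}_0(\mathcal{L}(u)-\mathcal{L}_0)$ by completing the square with the discarded quadratic (yielding the constant $b\mathcal{L}_0^2$), whereas you bound it by $b\mathcal{L}_0\|u_x\|_{L^1(I)}$, which merely shifts it into the coefficient $C_c$.
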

\begin{proof}
	Let us begin with \eqref{h-weak}. From \eqref{xii} we write 
	\begin{equation*}
		\begin{split}
			\big(h(u),u_x\big)_0=&-\int_0^\pi\bigg(b\big(\mathcal{L}(u)-\mathcal{L}_0\big)+c\,\xi_0(x)\,\bigg) \frac{\big(u+s\big)_x}{\Xi(u)}u_xdx\\
			&-\int_0^\pi\bigg(b\big(\mathcal{L}(u)-\mathcal{L}_0\big)+c\,\xi_0(x)\,\bigg) \frac{[\big(u+s\big)_x]^2-(u+s)_xs_x}{\Xi(u)}dx\\
			&-\int_0^\pi\bigg(b\big(\mathcal{L}(u)-\mathcal{L}_0\big)+c\,\xi_0(x)\,\bigg) \bigg(\Xi(u)-\frac{1+(u+s)_xs_x}{\Xi(u)}\bigg)dx.
		\end{split}
	\end{equation*}
	Adding and subtracting $b\mathcal{L}_0\big(\mathcal{L}(u)-\mathcal{L}_0\big)$ and $c\int_0^\pi\xi_0^2dx$ we obtain
	\begin{equation*}
		\begin{split}
			\big(h(u),u_x\big)_0=
			&-b\big(\mathcal{L}(u)-\mathcal{L}_0\big)\bigg(\int_0^\pi\Xi(u)-\mathcal{L}_0\bigg)-b\mathcal{L}_0\big(\mathcal{L}(u)-\mathcal{L}_0\big)-c\int_0^\pi\xi_0 \big(\Xi(u)-\xi_0\big)dx\\&-c\int_0^\pi\xi_0^2dx
			+\int_0^\pi\bigg(b\big(\mathcal{L}(u)-\mathcal{L}_0\big)+c\xi_0\bigg) \frac{1+(u+s)_xs_x}{\Xi(u)}dx\\
			=&-\Pi(u)-\dfrac{b}{2}\big(\mathcal{L}(u)-\mathcal{L}_0\big)^2-c\int_0^\pi\xi_0^2dx-b\mathcal{L}_0\big(\mathcal{L}(u)-\mathcal{L}_0\big)
			\\&+\int_0^\pi\bigg(b\big(\mathcal{L}(u)-\mathcal{L}_0\big)+c\xi_0\bigg) \frac{1+(u+s)_xs_x}{\Xi(u)}dx
			\\
			\leq&-\Pi(u)+b\mathcal{L}_0^2
			+\bigg|\int_0^\pi\bigg(b\big(\mathcal{L}(u)-\mathcal{L}_0\big)+c\xi_0\bigg) \frac{1+(u+s)_xs_x}{\Xi(u)}dx\bigg|.
		\end{split}
	\end{equation*}
	By the definition of $\Xi(u)$ in \eqref{xii} we have $1/\Xi(u)\leq 1$ and  $|(u+s)_x|/\Xi(u)\leq 1$ so that
	we may bound the term
	\begin{equation*}
		\begin{split}
			\bigg|\int_0^\pi\bigg(b\big(\mathcal{L}(u)-\mathcal{L}_0\big)+c\xi_0\bigg) \frac{1+(u+s)_xs_x}{\Xi(u)}dx\bigg|&\leq (b\big|\mathcal{L}(u)-\mathcal{L}_0\big|+c\max_{x\in \overline I}|\xi_0|)\int_0^\pi\frac{1}{\Xi(u)}+\frac{|(u+s)_x||s_x|}{\Xi(u)}dx\\&
			\leq (b\|u_x\|_{L^1(I)}+c\max_{x\in \overline I}|\xi_0|)\bigg(\pi+\int_0^\pi|s_x|dx\bigg);
		\end{split}
	\end{equation*}
	 in the last inequality we used \eqref{L}.
	The inequality \eqref{h-weak} follows taking $C_c:=b(\pi+\int_0^\pi|s_x|dx)$ and $\overline C_c:=c\max_{ \overline I}|\xi_0|(\pi+\int_0^\pi|s_x|dx)+b\mathcal{L}_0^2$.
	
	We now establish \eqref{h-L2}.
	We write
	\begin{equation*}
		\begin{split}
			\|h(u)\|_{0}^2=\int_0^\pi\bigg(b\big(\mathcal{L}(u)-\mathcal{L}_0\big)+c\,\xi_0(x)\,\bigg)^2 \frac{[(u+s)_x]^2}{\Xi(u)^2}dx\leq&
			2\bigg(b^2\pi\big(\mathcal{L}(u)-\mathcal{L}_0\big)^2+c^2\int_0^\pi\xi_0^2dx\bigg)\\\leq&
			2\bigg(b^2\pi\|u_x\|_{L^1(I)}^2+c^2\int_0^\pi\xi_0^2dx\bigg),
		\end{split}
	\end{equation*}
	where we used Young's inequality and \eqref{L}. The inequality \eqref{h-L2} follows taking $c_c:=2b^2\pi$ and $\overline c_c:=2c^2\int_0^\pi\xi_0^2dx$.	
\end{proof}

\subsection{Proof of Theorem \ref{teo1}}\label{proof-teo1}
In the calculations and estimates that follow, we must  test the equations with $w_t$ or $\theta_t$. This can be justified by density, operating on smooth solutions and passing to the limit. However, we recall a general result from \cite{temam} for second order (in time) systems, which circumvents the lack of regularity for weak solutions. 
\begin{lemma}\cite[Lemma 4.1]{temam}\label{lemmatemam}
	Let $(V,H,V')$ be a Hilbert triplet. Let $a(\cdot,\cdot)$ be a continuous and coercive bilinear form on $V$, associated to an operator $A$ continuously extending from $V$ to $V'$. This is to say  that $a(u,v)=\,_{V'}\langle  Au,v \rangle_{V}$ for all $u,v\in V$. 
	
	If $w$ is such that
	$$
	w\in L^2((0,T), V)\quad w_t\in L^2((0,T),H)\quad w_{tt}+Aw\in L^2(0,T,H)
	$$
	then, after a modification on a set of measure zero, $w\in C^0([0,T];V)$, $w_t\in C^0([0,T]; H)$ and, in the sense of distributions on $(0,T)$,
	$$
	( w_{tt}+Aw,w_t)_H=\dfrac{1}{2}\dfrac{d}{dt}\big(\|w_t\|_0^2+a(w,w)\big).
	$$
\end{lemma}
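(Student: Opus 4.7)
The plan is to prove the energy identity and the continuity assertions by a standard time-regularization (mollification) argument, followed by an upgrade from weak continuity to strong continuity using the fact that the energy is continuous in $t$. To begin, observe that the hypothesis $w_{tt}+Aw\in L^2(0,T;H)$, together with $Aw\in L^2(0,T;V')$ (which is immediate from $w\in L^2(0,T;V)$ and continuity of $A:V\to V'$), yields $w_{tt}\in L^2(0,T;V')$. Standard Lions--Magenes interpolation then already gives $w\in C([0,T];H)$ and weak continuities $w\in C_w([0,T];V)$ and $w_t\in C_w([0,T];H)$, so what remains is to identify the distributional derivative of the energy and to promote these weak continuities to strong ones.

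For the energy identity, first extend $w$ to a slightly larger time interval (e.g., by a reflection or a bounded extension operator) so that mollification in $t$ is meaningful. Let $\rho_\varepsilon$ be a standard mollifier in time and set $w^\varepsilon := \rho_\varepsilon * w$. Then $w^\varepsilon$ is smooth in $t$ with values in $V$, convolution commutes with $A$ and with $\partial_t$, and the chain rule
\begin{equation*}
\bigl(\partial_t^2 w^\varepsilon + A w^\varepsilon,\, \partial_t w^\varepsilon\bigr)_H \;=\; \tfrac{1}{2}\tfrac{d}{dt}\bigl(\|\partial_t w^\varepsilon\|_H^2 + a(w^\varepsilon,w^\varepsilon)\bigr)
\end{equation*}
holds pointwise in $t$. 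Integrating over a subinterval $(s,t)\subset (0,T)$ and letting $\varepsilon\to 0$, the convergences $w^\varepsilon\to w$ in $L^2(0,T;V)$, $\partial_t w^\varepsilon \to w_t$ in $L^2(0,T;H)$, and $\partial_t^2 w^\varepsilon + A w^\varepsilon \to w_{tt}+Aw$ in $L^2(0,T;H)$ (by hypothesis, since convolution commutes with the sum) let us pass to the limit and obtain an integrated energy identity. The right-hand side is the integral of an $L^1$ function, so $t\mapsto \|w_t(t)\|_H^2 + a(w(t),w(t))$ is absolutely continuous on $[0,T]$, and the pointwise distributional identity in the lemma follows.

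With the energy continuous in $t$, the upgrade from weak to strong continuity is automatic in Hilbert spaces. By coercivity of $a$, the map $u\mapsto a(u,u)^{1/2}$ is an equivalent norm on $V$; continuity of $t\mapsto a(w(t),w(t))$ combined with $w\in C_w([0,T];V)$ forces $\|w(t)\|_V^2$ to be continuous, and weak convergence together with norm convergence in a Hilbert space implies strong convergence, giving $w\in C([0,T];V)$. Applying the same reasoning to $t\mapsto \|w_t(t)\|_H^2$ together with $w_t\in C_w([0,T];H)$ yields $w_t\in C([0,T];H)$, completing the argument.

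The main technical obstacle is the behavior of the mollified quantities near the endpoints $t=0,T$: one must choose the extension so that the mollified objects retain the $L^2$-in-time bounds uniformly in $\varepsilon$, and so that $w^\varepsilon_{tt}+Aw^\varepsilon \to w_{tt}+Aw$ in $L^2(0,T;H)$. A key observation that simplifies this step is that one never needs to control $w^\varepsilon_{tt}$ alone in $H$ (which would fail, since $w_{tt}$ merely lies in $V'$); only the combination $w^\varepsilon_{tt}+Aw^\varepsilon$ must converge in $H$, and this combination is already in $L^2(0,T;H)$ by hypothesis and commutes with convolution.
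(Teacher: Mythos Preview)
The paper does not give its own proof of this lemma: it is simply quoted from Temam's monograph \cite{temam} (Lemma~4.1 there), and used as a black box to justify testing the equations with $w_t$ and $\theta_t$. Your proposal reproduces the standard argument behind that result (time-mollification of $w$, use of the smooth chain rule for $w^\varepsilon$, passage to the limit via the hypothesis $w_{tt}+Aw\in L^2(0,T;H)$, then upgrade from weak to strong continuity via lower semicontinuity of the two pieces of the energy together with continuity of their sum). The key observation you single out---that only the combination $w^\varepsilon_{tt}+Aw^\varepsilon$ must converge in $H$, never $w^\varepsilon_{tt}$ alone---is exactly the point that makes the argument go through, and your treatment of the endpoint issue via extension is the usual device. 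So your proof is correct and is essentially the classical one; there is nothing in the paper to compare it against.
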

We split the proof of Theorem \ref{teo1} in different steps concerning: existence of weak solutions, uniqueness of weak solutions, existence of strong solutions and the energy identity.

\textit{$\bullet$ Existence of weak solutions.} 	
The existence of a weak solution is proved applying the Galerkin scheme, see e.g. \cite{befaga,bongazmor,falocchi1,fergazmor}.  We give  a complete proof here; for additional details, see also \cite[Theorem 1]{falocchi1} where a similar cable-hanger nonlinearity  is considered.

We denote by $\{e_{k}\}^{\infty}_{k=1}$ an orthogonal basis of $L^2(I)$, $H^1_{0}(I)$, $H^2\cap H^1_{0}(I)$, given by $	e_{k}(x)=\sqrt{2/\pi}\sin(k x)$;
%	\begin{equation*}
	% \hspace{5mm}|| e_{k}||_{2}=1,\hspace{5mm}||e_{k}||_{H^1}=k,\hspace{5mm}|| e_{k}||_{H^2}=k^2,
	%	\end{equation*}
then for any $n\geq 1$, we introduce the space	$E_n:={\rm span} \{e_1,\dots,e_n\}$.
We put for any $n\geq 1$
\begin{equation*}
	\begin{split}
		&w^0_n:=\sum_{k=1}^{n} (w_0,e_k)_0 \hspace{1mm}e_k =\sum_{k=1}^{n}\dfrac{(w_0,e_k)_{2}}{k^4} \hspace{1mm}e_k, \qquad\theta^0_n:=\sum_{k=1}^{n} (\theta_0,e_k)_0 \hspace{1mm}e_k =\sum_{k=1}^{n}\dfrac{(\theta_0,e_k)_{2}}{k^4}\hspace{1mm}e_k,\\& w^1_n:=\sum_{k=1}^{n} (w_1,e_k)_0 \hspace{1mm}e_k,  \hspace{42mm} \theta^1_n:=\sum_{k=1}^{n}(\theta_1,e_k)_0 \hspace{1mm}e_k, 
	\end{split}
\end{equation*}
so that
\begin{equation*}
	\begin{split}
		&w^0_n\rightarrow w_0 \hspace{2mm}{\rm in}\hspace{2mm} H^2(I), \hspace{5mm} \theta^0_n\rightarrow\theta_0 \hspace{2mm}{\rm in}\hspace{2mm} H^2(I),\hspace{5mm}
		w^1_n\rightarrow w_1 \hspace{2mm}{\rm in}\hspace{2mm} L^2(I), \hspace{5mm}\theta^1_n\rightarrow\theta_1 \hspace{2mm}{\rm in}\hspace{2mm} L^2(I)
	\end{split}
	\label{converg ics}
\end{equation*}
as $n\rightarrow\infty$. For any $n\geq 1$ we seek $(w_n,\theta_n)$ such that
\begin{equation*}
	w_n(x,t)=\sum_{k=1}^{n} w_n^k(t) \hspace{1mm}e_k, \hspace{5mm} \theta_n(x,t)=\sum_{k=1}^{n} \theta_n^k(t) \hspace{1mm}e_k,
\end{equation*}
which solves the problem \eqref{weak}---thus we need to solve for the Fourier coefficients $\{w_n^k(t),\theta_n^k(t)\}_{k=1}^n$. 
 Restricting the test functions $v, \varphi\in E_n$, \eqref{weak} becomes for each $j=1,2,...,n$.
\begin{equation}\small
	\begin{cases}
		\big((w_n)_{tt},e_j\big)_0+\mu\big((w_n)_t,e_j\big)_0+\big(w_n,e_j\big)_{2}+\big[S\|w_n\|^2_1-P\big](w_n,e_j)_1-\big(f(w_n,\theta_n),e'_j\big)_0=\big(g-\beta \Upsilon(\theta_n)_t-\eta\theta_n,e_j\big)_0\vspace{2mm}\\
		\frac{\ell^2}{3}\big((\theta_n)_{tt},e_j\big)_0+\zeta \big((\theta_n)_t,e_j\big)_0+\epsilon\big(\theta_n,e_j\big)_{2}+\kappa\big(\theta_n,e_j\big)_{1}-\big(\overline f(w_n,\theta_n),e_j'\big)_0=0.
	\end{cases}
	\label{eq weak sist gal}
\end{equation}
Using orthogonality of the basis $\{e_k\}_{k=1}^{\infty}$, we obtain the system
\begin{equation}\small
	\begin{cases}
		\ddot{w}^k_n(t)+\mu\,\dot{w}^k_n(t)+k^4 w^k_n(t)+\big[S\big(\sum_{r=1}^{n}r^2w^r_n(t)^2\big)-P\big]k^2w^k_n(t)+\beta \Upsilon\dot{\theta}^k_n(t)+\eta \theta_n^k(t) =\big(f(w^k_n,\theta^k_n),e'_k\big)_0+\big(g,e_k\big)_0\vspace{2mm}\\
	\frac{\ell^2}{3}\ddot{\theta}^k_n(t)+\zeta \,\dot{\theta_n^k}(t)+(\epsilon k^2+\kappa )k^2\,\theta^k_n(t)=\big(\overline f(w^k_n,\theta^k_n),e'_k\big)_0\vspace{2mm}\\
		w^k_n(0)=(w_0,e_k)_0 \hspace{10mm}\dot{w}^k_n(0)=(w_1,e_k)_0\\\theta^k_n(0)=(\theta_0,e_k)_0\hspace{13mm}\dot{\theta}^k_n(0)=(\theta_1,e_k)_0\hfill\forall k=1,\dots,n.
	\end{cases}
	\label{eq weak sist lin gal  2n}
\end{equation}
Since $f(\cdot,\cdot)$ and  $\overline f(\cdot,\cdot)$ are scalar functions continuous in their arguments, see \eqref{melan42}, and $g$ is constant in time, from the standard theory of ODEs this finite-dimensional system admits a local solution, defined on some  $[0,t_n)$ with $t_n\in(0,T]$.

For simplicity, in the sequel we often denote the time partial derivative of a function by $\dot{v}$, instead of $v_t$. 
The obtained solution $(w_n,\theta_n)$  is $C^2([0,t_n),E_n)$; therefore, testing the first equation in \eqref{eq weak sist gal} by $\dot{w}_n\in E_n$, the second by $\dot{\theta}_n\in E_n$, summing the equations and integrating over $t\in (0,t_n)$ we obtain
\begin{equation}
	\begin{split}
		&\dfrac{\|\dot{w}_n(t)\|^2_0}{2}+\dfrac{\|w_n(t)\|^2_{2}}{2}+\frac{\ell^2}{6}\|\dot{\theta}_n(t)\|^2_0+\epsilon\dfrac{\|\theta_n(t)\|^2_{2}}{2}+\kappa\dfrac{\|\theta_n(t)\|^2_{1}}{2}-P\dfrac{\|w_n(t)\|^2_{1}}{2}+S\dfrac{\|w_n(t)\|^4_{1}}{4}\\&\Pi(w_n(t)+\ell\theta_n(t)\big)+\Pi(w_n(t)-\ell\theta_n(t)\big)+\mu \int_0^t\|\dot{w}_n(s)\|_0^2\,ds+\zeta \int_0^t\|\dot{\theta}_n(s)\|_0^2\,ds\\=&~\dfrac{\|w^1_n\|^2_0}{2}+\dfrac{\|w^0_n\|^2_{2}}{2}+\frac{\ell^2}{6}\|\theta^1_n\|^2_0+\epsilon\dfrac{\|\theta^0_n\|^2_{2}}{2}+\kappa\dfrac{\|\theta^0_n\|^2_{1}}{2}-P\dfrac{\|w_n^0\|^2_{1}}{2}+S\dfrac{\|w_n^0\|^4_{1}}{4}+\\&\Pi\big(w^0_n+\ell\theta_n^0\big)+\Pi\big(w^0_n-\ell\theta_n^0\big)+\int_0^t\big(g,\dot{w}_n(s)\big)_0\,ds-\int_0^t\big(\beta \Upsilon\dot{\theta}_n(s)+\eta \theta_n(s),\dot{w}_n(s)\big)_0\,ds.
	\end{split}
	\label{step 2.1}
\end{equation} 
We apply the H\"older and Young inequalities to the terms 
\begin{align}\label{bd1}
		\bigg|\int_0^t\big(g,\dot{w}_n(s)\big)_0\,ds\bigg|\leq&~ \dfrac{1}{2}\bigg(t\|g\|_{0}^2+\int_0^t\|\dot{w}_n(s)\|^2_0\,ds\bigg)\\
		\bigg|\int_0^t\big(\beta \Upsilon\dot{\theta}_n(s)+\eta \theta_n(s),\dot{w}_n(s)\big)_0\,ds\bigg|\leq&~ \dfrac{\beta\ell+|\eta|}{2}\int_0^t(\|\dot{\theta}_n(s)\|^2_0+\|\theta_n(s)\|^2_0+2\|\dot{w}_n(s)\|^2_0)\,ds.\label{bd11}
\end{align}
Combining these inequalities, considering the regularity of the initial conditions, and noting the fact that $\tau\mapsto\frac{P}{2}\tau^2-\frac{S}{4}\tau^4$ has the maximum $\frac{P^2}{4S}$, we obtain
\begin{equation}
	\begin{split}
		&\dfrac{\|\dot{w}_n(t)\|^2_0}{2}+\dfrac{\|w_n(t)\|^2_{2}}{2}+\frac{\ell^2}{6}\|\dot{\theta}_n(t)\|^2_0+\epsilon\dfrac{\|\theta_n(t)\|^2_{2}}{2}+\kappa\dfrac{\|\theta_n(t)\|^2_{1}}{2}+\\&\Pi(w_n(t)+\ell\theta_n(t)\big)+\Pi(w_n(t)-\ell\theta_n(t)\big)+\mu \int_0^t\|\dot{w}_n(s)\|_0^2\,ds+\zeta \int_0^t\|\dot{\theta}_n(s)\|_0^2\,ds\\\leq&~\dfrac{\|w_1\|^2_2}{2}+\dfrac{\|w_0\|^2_{2}}{2}+\frac{\ell^2}{6}\|\theta_1\|^2_0+\epsilon\dfrac{\|\theta_0\|^2_{2}}{2}+\kappa\dfrac{\|\theta_0\|^2_{1}}{2}+S\dfrac{\|w_0\|_1^4}{4}+\Pi\big(w_0+\ell\theta_0\big)+\Pi\big(w_0-\ell\theta_0\big)+\\&\dfrac{\beta\ell+|\eta|}{2}\int_0^t(\|\dot{\theta}_n(s)\|^2_0+\|\theta_n(s)\|^2_0+2\|\dot{w}_n(s)\|^2_0)\,ds+\dfrac{1}{2}\int_0^t\|\dot{w}_n(s)\|^2_0\,ds+\dfrac{T}{2}\|g\|_{0}^2+\dfrac{P^2}{4S}.
	\end{split}
	\label{step 2.2}
\end{equation}

Observing that $\Pi(w_n+\ell\theta_n)+\Pi(w_n-\ell\theta_n)+2c\int_0^\pi\xi_0^2\,dx>0$,  applying \eqref{spectral} and \eqref{Pi} with $v=w_0\pm\ell\theta_0$ and $z=0$, we infer the existence of $C,c>0$ such that	
\begin{equation*}
	\begin{split}
		\|\dot{w}_n(t)\|^2_0+\|w_n(t)\|^2_{2}+\|\dot{\theta}_n(t)\|^2_0+\|\theta_n(t)\|^2_{2}\leq&~ C\big(\|w_1\|^2_0+\|w_0\|^2_{2}+\|\theta_1\|^2_0+\|\theta_0\|^2_{2}+\|w_0\|^4_{2}+1+T\big)\\&+c\int_0^t(\|\dot{w}_n(s)\|^2_0+\|w_n(s)\|^2_{2}+\|\dot{\theta}_n(s)\|^2_0+\|\theta(s)\|^2_{2})\,ds.
	\end{split}
	\label{step 2.3}
\end{equation*} 
Hence, by the Gronwall inequality, we obtain
\begin{equation}\label{stima}
	\|\big(w_n(t),\dot{w}_n(t);\theta_n(t),\dot{\theta}_n(t)\big)\|_Y^2\leq C\big(\|(w_0,w_1;\theta_0,\theta^1)\|_Y^2+\|w_0\|^4_{2}+1+T\big)e^{cT},
\end{equation}
for any	$t\in[0,t_n)$, where the constants $C$ and $c$ are now {\em independent of $n$ and $t$}. 
Hence,	$w_n$ and $\theta_n$ are globally defined in $\R_+$ for every $n\geq1$ and the sequences $\{w_n\}_n$ and $\{\theta_n\}_n$ are uniformly bounded in the space $C^0([0,T]; (H^2\cap H_0^1)(I))\cap C^1([0,T], L^2(I))$ for each finite $T>0$. We now show that $\{w_n\}_n$ and $\{\theta_n\}_n$ admit strongly convergent subsequences in the same spaces.

The estimate \eqref{stima} shows that $\{w_n\}_n$ and $\{\theta_n\}_n$ are bounded and equicontinuous in $C^0([0,T]; L^2(I))$; then, by the Ascoli--Arzel\`a theorem, we conclude that, up to a subsequence, $u_n \rightarrow  u$ strongly in $C^0([0,T]; L^2(I))$.

For every $n>m\geq 1$, we consider the two equations in \eqref{eq weak sist gal} satisfied  by $(w_n,\theta_n)$ and tested respectively by $\dot{w}_n$ and $\dot{\theta}_n$;  afterwards, we consider the two equations in \eqref{eq weak sist gal} satisfied  by $(w_m,\theta_m)$ and tested respectively by $\dot{w}_m$ and $\dot{\theta}_m$. We then subtract \eqref{eq weak sist gal}$_m$ from \eqref{eq weak sist gal}$_n$ and we put $w_{n,m}:=w_n-w_m$ and $\theta_{n,m}:=\theta_n-\theta_m$. Summing and integrating in time, we find the estimate for all $t\in[0,T]$
\begin{equation}
\begin{split}
		&\dfrac{\|\dot{w}_{n,m}(t)\|^2_0}{2}+\dfrac{\|w_{n,m}(t)\|^2_{2}}{2}+\frac{\ell^2}{6}\|\dot{\theta}_{n,m}(t)\|^2_0+\epsilon\dfrac{\|\theta_{n,m}(t)\|^2_{2}}{2}+\kappa\dfrac{\|\theta_{n,m}(t)\|^2_{1}}{2}\\&+\mu \int_0^t\|\dot{w}_{n,m}(s)\|_0^2\,ds+\zeta \int_0^t\|\dot{\theta}_{n,m}(s)\|_0^2\,ds\\ =&~P\dfrac{\|w_{n,m}(t)\|^2_{1}}{2}-\frac{S}{4}\bigg(\|w_{n}(t)\|^4_1-\|w_m(t)\|^4_1\bigg)+\dfrac{\|w^1_{n,m}\|^2_0}{2}+\dfrac{\|w^0_{n,m}\|^2_{2}}{2}\\&+\frac{\ell^2}{6}\|\theta^1_{n,m}\|^2_0+\epsilon\dfrac{\|\theta^0_{n,m}\|^2_{2}}{2}+\kappa\dfrac{\|\theta^0_{n,m}\|^2_{1}}{2}-P\dfrac{\|w^0_{n,m}\|^2_{1}}{2}+\frac{S}{4}\bigg(\|w_n^0\|^4_1-\|w_m^0\|^4_1\bigg)\\&+\int_0^t\big(g,\dot{w}_{n,m}(s)\big)_0\,ds-\int_0^t\big(\beta \Upsilon\dot{\theta}_{n,m}(s)+\eta \theta_{n,m}(s),\dot{w}_{n,m}(s)\big)_0\,ds+\\&-\Pi(w_n(t)+\ell\theta_n(t))+\Pi(w_m(t)+\ell\theta_m(t))+\Pi(w_n^0+\ell\theta_n^0)-\Pi(w_m^0+\ell\theta_m^0)\\&-\Pi(w_n(t)-\ell\theta_n(t))+\Pi(w_m(t)-\ell\theta_m(t))+\Pi(w_n^0-\ell\theta_n^0)-\Pi(w_m^0-\ell\theta_m^0)
		.
		\end{split}
	\label{step 2.6}
\end{equation} 
We apply Lemma \ref{lemma1} with $v=w_n+\ell\theta_n$ and $z=w_m+\ell\theta_m$, so that through \eqref{interpolation} we infer the existence of $C>0$ such that
\begin{equation*}
	\begin{split}
		|\Pi(w_n(t)+\ell\theta_n(t))-\Pi(w_m(t)+\ell\theta_m(t))|&\leq C\big(\|w_n(t)-w_m(t)\|_{1}+\|\theta_n(t)-\theta_m(t)\|_{1}\big)\\&\leq C\big(\|w_{n,m}(t)\|^{1/2}_{0}+\|\theta_{n,m}(t)\|^{1/2}_{0}\big)
	\end{split}
\end{equation*}
and similarly for each of the terms involving $\Pi(\cdot)$.
Again through \eqref{interpolation} we infer
\small{
	\begin{equation*}
		\begin{split}
			\frac{P}{2}\|w_{n,m}(t)\|^2_{1}-\frac{S}{4}\big(\|w_{n}(t)\|^4_1-\|w_m(t)\|^4_1\big)=&~
			\frac{P}{2}\|w_{n,m}(t)\|^2_{1}-\frac{S}{4}(\|w_{n}(t)\|^2_1-\|w_m(t)\|^2_1)(\|w_{n}(t)\|^2_1+\|w_m(t)\|^2_1)\\\leq &~ C\big(\|w_{n,m}(t)\|_0+\|w_{n,m}(t)\|_1\big) \\ \leq& ~ C\big(\|w_{n,m}(t)\|_{0}+\|w_{n,m}(t)\|^{1/2}_{0}\big),
		\end{split}
\end{equation*}}
\normalsize	and similarly for ~$-\frac{P}{2}\|w_{n,m}^0\|^2_{1}+\frac{S}{4}(\|w_n^0\|^4_1-\|w_m^0\|^4_1)$.
As in \eqref{bd1}, we bound
\begin{equation*}
	\begin{split}
&\bigg|\int_0^t\big(\beta \Upsilon\dot{\theta}_{n,m}(s)+\eta \theta_{n,m}(s),\dot{w}_{n,m}(s)\big)_2\,ds\bigg|\leq \dfrac{\beta\ell+|\eta|}{2}\int_0^t(\|\dot{\theta}_{n,m}(s)\|^2_0+\|\theta_{n,m}(s)\|^2_0+2\|\dot{w}_{n,m}(s)\|^2_0)\,ds;
	\end{split}
\end{equation*}
from \eqref{stima} we know that $\{\dot{w}_{n}(s)\}_n$ is bounded in $C^0([0,T];L^2(I))$, and since  $g\in L^2(I)$, denoting by $P_n$ (resp. $P_m$) its projection on the first $n$ (resp. $m$) modes we obtain 
$$
\bigg|\int_0^t\big(g,\dot{w}_{n,m}(s)\big)_0\,ds\bigg|\leq \|\dot{w}_{n,m}\|_{C^0([0,T];L^2(I))}\int_0^t\|P_ng-P_mg\|_0\,ds\leq CT\|P_ng-P_mg\|_0.
$$
Combing all these inequalities, from \eqref{step 2.6} we infer the existence of $c, C>0$ such that
\begin{equation*}
	\begin{split}
		&\hspace{-10mm}\|\dot{w}_{n,m}(t)\|^2_0+\|w_{n,m}(t)\|^2_{2}+\|\dot{\theta}_{n,m}(t)\|^2_0+\|\theta_{n,m}(t)\|^2_{2}\\\leq C \Big\{&\|w^1_{n,m}\|^2_0+\|w^0_{n,m}\|^2_{2}+\|\theta^1_{n,m}\|^2_0+\|\theta^0_{n,m}\|^2_{2}+\|w^0_{n,m}\|_0+\|w^0_{n,m}\|_0^{1/2}+\|\theta^0_{n,m}\|_0^{1/2}\\&+\|w_{m,n}\|_{C^0([0,T];L^2(I))}+\|w_{n,m}\|^{1/2}_{C^0([0,T];L^2(I))}+\|\theta_{n,m}\|^{1/2}_{C^0([0,T];L^2(I))}+T\|P_ng-P_mg\|_0\Big\}\\&+c\int_0^t(\|\dot{w}_{n,m}(s)\|^2_0+\|w_{n,m}(s)\|^2_{2}+\|\dot{\theta}_{n,m}(s)\|^2_0+\|\theta_{n,m}(s)\|^2_{2})\,ds.
	\end{split}
\end{equation*}
Applying Grownwall Lemma we infer
\begin{align*}
		\|\big(w_{n,m}(t),\dot{w}_{n,m}(t);\theta_{n,m}(t),\dot{\theta}_{n,m}(t)\big)\|_Y^2\leq&~ C\Big\{\|(w^0_{n,m},w^1_{n,m};\theta^0_{n,m},\theta^1_{n,m})\|_Y^2+\|w^0_{n,m}\|_0\\&+\|w^0_{n,m}\|_0^{1/2}+\|\theta^0_{n,m}\|_0^{1/2}+\|w_{m,n}\|_{C^0([0,T];L^2(I))}\\&+\|w_{n,m}\|^{1/2}_{C^0([0,T];L^2(I))}+\|\theta_{n,m}\|^{1/2}_{C^0([0,T];L^2(I))}\\&+
		T\|P_ng-P_mg\|_0\Big\}e^{cT}\rightarrow0\quad \text{as }n,m\rightarrow\infty,
\end{align*}
thanks to the strong convergence in $C^0([0,T];L^2(I))$. Therefore $\{w_n\}_n$ and $\{\theta_n\}_{n}$ are Cauchy sequences in the space $C^0([0,T]; (H^2\cap H_0^1)(I))\cap C^1([0,T],L^2(I))$; in turn, this yields, up to a subsequence, 
\begin{equation}\label{conv}
	w_n\rightarrow w,\quad \theta_n\rightarrow\theta \quad \text{in }C^0([0,T]; (H^2\cap H_0^1)(I))\cap C^1([0,T],L^2(I)).
\end{equation}

%	In classical way from the estimate \eqref{stima} we infer the boundedness of $\{w_n\}, \{\theta_n\}$ and $\{\dot{w}_n\}, \{\dot{\theta}_n\}$ in $L^\infty((0,T),H^2(I))$ and resp. in $L^\infty((0,T),L^2(I))$. Then, it is possible to extract  subsequences, so that there is  weak* convergence in the previous spaces, see \cite{falocchi1} for details. Then, due to the compact embedding $H^1(I_T)\subset L^2(I_T)$, we obtain the strong convergence
%	\begin{equation*}
	%	w_{n}  \rightarrow w, \hspace{3mm} \theta_{n}\rightarrow\theta \hspace{12mm} {\rm in} \hspace{10mm} L^{2}(I_T).
	%	\end{equation*}

Now take $v,\varphi\in (H^2\cap H_0^1)(I)$ and consider the sequences of projections $P_nv$ and $P_n\varphi$. Taking $P_nv$ and $P_n\varphi$ as test functions in \eqref{eq weak sist gal}, multiplying by $\psi\in C^\infty_c(0,T)$ and integrating over $[0,T]$ we get
\begin{equation}
	\begin{cases}
		\int_0^T\big((w_n)_{t},P_nv\big)_0\psi'=\int_0^T\big\{\big(\mu(w_n)_t+\beta \Upsilon(\theta_n)_t+\eta\theta_n-g,P_nv\big)_0+\big(w_n,P_nv\big)_{2}-\big(f(w_n,\theta_n),P_nv'\big)_0\\\hspace{44mm}+\big[S\|w_n\|_1^2-P\big]\big(w_n,P_nv\big)_{1}\big\}\psi\\
		\int_0^T\frac{\ell^2}{3}\big((\theta_n)_{t},P_n\varphi\big)_0\psi'=\int_0^T\big[\zeta \big((\theta_n)_t,P_n\varphi\big)_0+\epsilon\big(\theta_n,P_n\varphi\big)_{2}+\kappa\big(\theta_n,P_n\varphi\big)_{1}-\big(\overline f(w_n,\theta_n),P_n\varphi'\big)_0\big]\psi.
	\end{cases}
	\label{eqweak}
\end{equation}
To pass to the limit in \eqref{eqweak} we must consider the nonlinear terms. In particular, we have
\begin{equation}\label{S}
	S(\|w_n(t)\|_1^2-\|w(t)\|^2_1)\leq C\|w_n(t)-w(t)\|_1\leq C\|w_n-w\|_{C^0([0,T];H^1(I))}\rightarrow 0\quad \text{ as }n\rightarrow\infty;
\end{equation}
concerning cable nonlinearities, $f(w,\theta)$ and $\overline f(w_n,\theta_n)$,
from \eqref{L} and \eqref{conv} we have 
\begin{equation*}
	\begin{split}
		|\mathcal{L}(w_n\pm\ell\theta_n)-\mathcal{L}(w\pm\ell\theta)|&\leq C\big(\|w_n(t)-w(t)\|_{1}+\|\theta_n(t)-\theta(t)\|_{1}\big)\\&\leq C\big(\|w_n-w\|_{C^0([0,T];H^1(I))}+\|\theta_n-\theta\|_{C^0([0,T];H^1(I))}\big)\rightarrow 0\quad \text{ as }n\rightarrow\infty,
	\end{split}
\end{equation*}
implying $\mathcal{L}(w_n\pm\ell\theta_n)\rightarrow\mathcal{L}(w\pm\ell\theta)$ as $n\rightarrow\infty$.
Moreover, we obtain 
\begin{equation*}
	\int_{0}^{T}\int_{0}^{\pi}\dfrac{[(w_n\pm\ell\theta_n+s)_{x}]^2}{\Xi(w_n\pm\ell\theta_n)^2}dx dt< T\pi.
\end{equation*}
Hence, $f(w_n,\theta_n)$ and $\overline f(w_n,\theta_n)$, being continuous in their arguments, converge weakly, up to a subsequence, to $f(w,\theta)$ and $\overline f(w,\theta)$  in $L^2(I_T)$. Therefore, it is possible to pass to the limit the equations \eqref{eqweak}. We then obtain, by rewriting, that
\begin{equation*}
	\begin{cases}
		\int_0^T\big(w_{t},v\big)_0\psi'=\int_0^T\big\{\big(\mu w_t+\beta \Upsilon\theta_t+\eta\theta-g,v\big)_0+\big(w,v\big)_{2}+\big[S\|w\|_1^2-P\big]\big(w,v\big)_{1}-\big(f(w,\theta),v'\big)_0\big\}\psi\\
		\int_0^T\frac{\ell^2}{3}\big(\theta_{t},\varphi\big)_0\psi'=\int_0^T\big[\zeta \big(\theta_t,\varphi\big)_0+\epsilon\big(\theta,\varphi\big)_{2}+\kappa\big(\theta,\varphi\big)_{1}-\big(\overline f(w,\theta),\varphi'\big)_0\big]\psi.
	\end{cases}
	\label{eqweak2}
\end{equation*}
For our choice of test functions, we deduce that  $w_{tt},\theta_{tt}\in C^0([0,T];\mathcal{H})$ and they solve $a.e.~t \in (0,T)$
\begin{equation}\label{wtt}
	\begin{cases}
		w_{tt}=-Lw-\mu w_t+\big(S\|w\|_1^2-P\big)w_{xx}-[f(w,\theta)]_x+g-\beta \Upsilon\theta_t-\eta\theta\\
		\frac{\ell^2}{3}\theta_{tt}=-\overline L\theta-\zeta \theta_t-[\overline f(w,\theta)]_x
	\end{cases}
\end{equation}
where $L, \overline L:(H^2\cap H_0^1)(I)\rightarrow \mathcal{H}$ stand for the canonical Riesz isometric isomorphisms respectively given by $\,_{-2}\langle Lw,v\rangle_{2}:=(w,v)_{2}$ for all $w,v\in (H^2\cap H^1_0)(I)$ and $	\,_{-2}\langle \overline L\theta,\varphi\rangle_{2}:=\kappa(\theta,\varphi)_{1}+\epsilon(\theta,\varphi)_{2}$ for all $\theta,\varphi\in (H^2\cap H^1_0)(I)$. We thus conclude that $(w,\theta)$ is a weak solution of \eqref{weak}.

\textit{$\bullet$ Uniqueness of weak solution.} 
For contradiction, consider two solutions $(w^\mathcal{I},\theta^\mathcal{I})$, $(w^\mathcal{II},\theta^\mathcal{II})$ satisfying the same initial conditions and the regularity in Definition \ref{def_weak}. By subtracting the two systems and denoting by
$W=w^\mathcal{I}-w^\mathcal{II}$ and $\Theta=\theta^\mathcal{I}-\theta^\mathcal{II}$, we see that $(W,\Theta)$ is a solution of
\begin{equation*}
	\begin{cases}
			\,_{-2}\langle W_{tt},v \rangle_{2}+\mu (W_t,v)_0+(W,v)_{2}-P(W,v)_1+\beta \Upsilon (\Theta_t,v)_0+\eta(\Theta,v)_0\\=S\big(\|w^\mathcal{I}\|^2_1w^\mathcal{I}_{xx}-\|w^\mathcal{II}\|^2_1w^\mathcal{II}_{xx},v\big)_0-\big([f(w^\mathcal{I},\theta^\mathcal{I})-f(w^\mathcal{II},\theta^{\mathcal{II}})]_x,v\big)_0\vspace{2mm}\\
		\frac{\ell^2}{3}	\,_{-2}\langle\Theta_{tt},\varphi\rangle_{2}+\zeta (\Theta_t,\varphi)_0+\epsilon(\Theta,\varphi)_{2}+\kappa(\Theta,\varphi)_{1}=-\big([\overline f(w^\mathcal{I},\theta^\mathcal{I})-\overline f(w^\mathcal{II},\theta^\mathcal{II})]_x,\varphi\big)_0
	\end{cases}
	\label{eq weak sist2sol}
\end{equation*}
for all $v, \varphi\in (H^2\cap H_0^1)(I)$ with homogeneous initial conditions and $t>0$.

By \eqref{wtt} and Lemma \ref{lemmatemam}, we test the equations
by $v=\dot{W}$ and $\varphi=\dot{\Theta}$. Summing the equations and integrating over $(0,t)$ (omitting the dependence on $t$), we end up with
\begin{equation*}
	\begin{split}
		&\dfrac{\|\dot{W}\|^2_0}{2}+\dfrac{\|W\|^2_{2}}{2}+\dfrac{\|\dot{\Theta}\|^2_0}{2}+\epsilon\dfrac{\|\Theta\|^2_{2}}{2}+\kappa\dfrac{\|\Theta\|^2_{1}}{2}-P\dfrac{\|W\|^2_{1}}{2}+\mu \int_0^t\|\dot{W}\|_0^2+\zeta \int_0^t\|\dot{\Theta}\|_0^2\\=&-\int_0^t\big([f(w^\mathcal{I},\theta^\mathcal{I})-f(w^\mathcal{II},\theta^\mathcal{II})]_x,\dot{W}\big)_0-\int_0^t\big([\overline f(w^\mathcal{I},\theta^\mathcal{I})-\overline f(w^\mathcal{II},\theta^\mathcal{II})]_x,\dot{\Theta}\big)_0\\&+S\int_0^t\big(\|w^\mathcal{I}\|^2_1W_{xx}+[\|w^\mathcal{I}\|^2_1-\|w^\mathcal{II}\|^2_1]w^\mathcal{II}_{xx}\,,\dot{W}\big)_0-\int_0^t\big(\beta \Upsilon\dot{\Theta}+\eta \Theta,\dot{W}\big)_0.
	\end{split}
	\label{unique}
\end{equation*} 
To estimate the cable nonlinearity terms on the right hand side (see $f$ and $\overline f$ in \eqref{melan42}), we need the following inequalities: let $u^\mathcal{I},u^\mathcal{II}\in (H^2\cap H_0^1)(I)$ then, by Lemma \eqref{lemma1}, it holds
\begin{equation*}
	\begin{split}
		\bigg|\dfrac{(u^\mathcal{I}+s)_{xx}}{\Xi(u^\mathcal{I})^3}-\dfrac{(u^\mathcal{II}+s)_{xx}}{\Xi(u^\mathcal{II})^3}\bigg|=&~\bigg|\dfrac{\Xi(u^\mathcal{II})^3(u^\mathcal{I}+s)_{xx}-\Xi(u^\mathcal{I})^3(u^\mathcal{II}+s)_{xx}}{\Xi(u^\mathcal{I})^3\Xi(u^\mathcal{II})^3}\bigg|\\=&~\bigg|\dfrac{\Xi(u^\mathcal{II})^3(u^\mathcal{I}-u^\mathcal{II})_{xx}-[\Xi(u^\mathcal{I})^3-\Xi(u^\mathcal{II})^3](u^\mathcal{II}+s)_{xx}}{\Xi(u^\mathcal{I})^3\Xi(u^\mathcal{II})^3}\bigg|\\
		&\hspace{-20mm}\leq~|(u^\mathcal{I}-u^\mathcal{II})_{xx}|+|\Xi(u^\mathcal{I})-\Xi(u^\mathcal{II})|\big(\Xi(u^\mathcal{I})^2+\Xi(u^\mathcal{I})\Xi(u^\mathcal{II})+\Xi(u^\mathcal{II})^2\big)|(u^\mathcal{II}+s)_{xx}|\\
		\leq&~|(u^\mathcal{I}-u^\mathcal{II})_{xx}|+\frac{3}{2}|(u^\mathcal{I}-u^\mathcal{II})_x|\big(\Xi(u^\mathcal{I})^2+\Xi(u^\mathcal{II})^2\big)|(u^\mathcal{II}+s)_{xx}|\\
		\leq&~|(u^\mathcal{I}-u^\mathcal{II})_{xx}|+\frac{3}{2}\big(2+(u^\mathcal{I})_x^2+(u^\mathcal{II})_x^2\big)|(u^\mathcal{II}+s)_{xx}||(u^\mathcal{I}-u^\mathcal{II})_x|;
	\end{split}
\end{equation*}
due to the compact embedding $H^2(I)\subset\subset C^1(\overline I)$, applying the H\"older and Young inequalities, and choosing $u^\mathcal{I}=w^\mathcal{I}\pm\ell\theta^\mathcal{I}$ and $u^\mathcal{II}=w^\mathcal{II}\pm \ell\theta^\mathcal{II}$;
we then infer the existence of $C>0$ such that
\begin{equation*}
	\begin{split}
		\bigg|\bigg(\dfrac{(w^\mathcal{I}\pm\ell\theta^\mathcal{I}+s)_{xx}}{\Xi(w^\mathcal{I}\pm\ell\theta^\mathcal{I})^3}-&\dfrac{(w^\mathcal{II}\pm\ell\theta^\mathcal{II}+s)_{xx}}{\Xi(w^\mathcal{II}\pm\ell\theta^\mathcal{II})^3},\dot{W}\bigg)_0\bigg|\\\leq&~ \big(\|W\|_{2}+\ell\|\Theta\|_{2}+C\|(u^\mathcal{II}+s)_{xx}\|_0\|(W\pm\ell\Theta)_x\|_{C^0(\overline I)}\big)\|\dot{W}\|_{0}\\
		\leq&~ C\big(\|W\|_{2}+\|\Theta\|_{2}\big)\|\dot{W}\|_{0} \\ \leq&~ C\big(\|\dot{W}\|_{0} ^2+\|W\|^2_{2}+\|\Theta\|^2_{2}\big).
	\end{split}
\end{equation*}
From \eqref{L}, we obtain the existence of $C>0$ such that
$$
|\big(\mathcal{L}(w^\mathcal{I}\pm\ell\theta^\mathcal{I})-\mathcal{L}(w^\mathcal{II}\pm\ell\theta^\mathcal{II}),\dot{W}\big)_2|\leq C\big(\|W\|_{1}+\|\Theta\|_{1}\big)\|\dot{W}\|_0\leq C\big(\|\dot{W}\|_0^2+\|W\|_{2}^2+\|\Theta\|_{2}^2\big) ;
$$
Thanks to the boundedness of the function $\xi_0(x)$, see \eqref{xii}, and the previous estimates, we obtain
$$
\bigg|\int_0^t\big([f(w^\mathcal{I},\theta^\mathcal{I})-f(w^\mathcal{II},\theta^\mathcal{II})]_x,\dot{W}\big)_0\bigg|\leq C\int_0^t\big(\|\dot{W}\|_{0} ^2+\|W\|^2_{2}+\|\Theta\|^2_{2}\big).
$$
Similarly, we find
$$
\bigg|\int_0^t\big([\overline f(w^\mathcal{I},\theta^\mathcal{I})-\overline f(w^\mathcal{II},\theta^\mathcal{II})]_x,\dot{\Theta}\big)_0\bigg|\leq C\int_0^t\big(\|\dot{\Theta}\|_{0} ^2+\|W\|^2_{2}+\|\Theta\|^2_{2}\big).
$$
We bound the last nonlinear term as follows
$$
S\bigg|\int_0^t\big(\|w^\mathcal{I}\|^2_1W_{xx}+[\|w^\mathcal{I}\|^2_1-\|w^\mathcal{II}\|^2_1]w^\mathcal{II}_{xx}\,,\dot{W}\big)_0\bigg|\leq 
C\int_0^t(\|W\|_2+\|W\|_1)\|\dot{W}\|_0\leq 
C\int_0^t\big(\|\dot{W}\|^2_0+\|W\|_2^2\big).$$
As in \eqref{bd1}, we apply the H\"older and Young inequalities to
\begin{equation*}\label{usual}
	\bigg|\int_0^t\big(\beta \Upsilon\dot{\Theta}+\eta \Theta,\dot{W}\big)_0\bigg| \leq \dfrac{\beta\ell+|\eta|}{2}\int_0^t(\|\dot{\Theta}\|^2_0+\|\Theta\|^2_0+2\|\dot{W}\|^2_0);
\end{equation*}
collecting  these inequalities, we obtain a constant $C>0$ such that 
\begin{equation*}
	\begin{split}
		&\|\dot{W}(t)\|^2_0+\|W(t)\|^2_{2}+\|\dot{\Theta}(t)\|^2_0+\|\Theta(t)\|^2_{2}\!\leq\! C\!\int_0^t\big(\|\dot{W}(s)\|^2_0+\|W(s)\|^2_{2}+\|\dot{\Theta}(s)\|^2_0+\|\Theta(s)\|^2_{2}\big)ds.
	\end{split}
	\label{uniques}
\end{equation*}
Hence, the Gronwall inequality guarantees $(W,\Theta)\equiv (0,0)$.

\textit{$\bullet$ Strong solution.} We assume the improved regularity of the data: $w_0,\theta_0\in\mathcal{D}$ and $w_1,\theta_1\in (H^2\cap H_0^1)(I)$. Then we formally differentiate \eqref{eq weak sist gal} with respect to $t$, we take as test functions $v=\ddot{w}_n$ and $\varphi=\ddot{\theta}_n$ and summing the equations, we obtain
\begin{equation}\label{eq}
	\begin{split}
		\dfrac{1}{2}\dfrac{d}{dt}&\bigg(\|\ddot{w}_n\|_0^2+\|\dot{w}_n\|^2_{2}+\frac{\ell^2}{3}\|\ddot{\theta}_n\|_0^2+\epsilon\|\dot{\theta}_n\|^2_{2}+\kappa\|\dot{\theta}_n\|^2_{1}-P\|\dot{w}_n\|^2_{1}+\dfrac{S}{2}\|\dot{w}_n\|^4_{1}\bigg)+\mu\|\ddot{w}_n\|_0^2+\zeta\|\ddot{\theta}_n\|_0^2\\=&~-\big([f(w_n,\theta_n)]_{xt},\ddot{w}_n\big)_0-\big([\overline f(w_n,\theta_n)]_{xt},\ddot{\theta}_n\big)_0-\big(\beta \Upsilon \ddot{\theta}_n+\eta\dot{\theta}_n,\ddot{w}_n\big)_0.
	\end{split}	
\end{equation}
We omit the dependence on $t$ for brevity.
To handle the right hand side terms, we compute the derivative
$$
\frac{d}{dt}\bigg[\dfrac{(w_n\pm\ell\theta_n+s)_{xx}}{\Xi(w_n\pm\ell\theta_n)^3}\bigg]=\dfrac{(w_n\pm\ell\theta_n)_{xxt}\Xi(w_n\pm\ell\theta_n)^2-3(w_n\pm\ell\theta_n+s)_{xx}(w_n\pm\ell\theta_n+s)_{x}(w_n\pm\ell\theta_n)_{xt}}{\Xi(w_n\pm\ell\theta_n)^5}
$$
so that
\begin{equation*}
	\begin{split}
		\bigg|\bigg(\bigg[\dfrac{(w_n\pm\ell\theta_n+s)_{xx}}{\Xi(w_n\pm\ell\theta_n)^3}\bigg]_t,\ddot{w}_n\bigg)_0\bigg|\leq& C\big(\|(w_n\pm\ell\theta_n)_{xxt}\|_0+\|(w_n\pm\ell\theta_n)_{xt}\|_0\big)\|\ddot{w}_n\|_0\\\leq& C\big(\|\dot{w}_n\|_{2}+\|\dot{\theta}_n\|_{2}+\|\dot{w}_n\|_{1}+\|\dot{\theta}_n\|_{1}\big)\|\ddot{w}_n\|_0\\\leq& C\big(\|\ddot{w}_n\|^2_0+\|\dot{w}_n\|^2_{2}+\|\dot{\theta}_n\|^2_{2}\big),
	\end{split}
\end{equation*}
where in the last inequality we apply \eqref{spectral}, being $\dot{w}_n, \dot{\theta}_n\in (H^2\cap H_0^1)(I)$, and Young's inequality.
We also compute
$$
|\big[\mathcal{L}(w_n\pm\ell\theta_n)\big]_t|=\bigg|\int_0^\pi \dfrac{(w_n\pm\ell\theta_n+s)_x(w_n\pm\ell\theta_n)_{xt}}{\Xi(w_n\pm\ell\theta_n)}\bigg|\leq C\|(w_n\pm\ell\theta_n)_{xt}\|_0
$$
so that
\begin{equation*}
	\begin{split}
		\bigg|\big[\mathcal{L}(w_n\pm\ell\theta_n)\big]_t\bigg(\dfrac{(w_n\pm\ell\theta_n+s)_{xx}}{\Xi(w_n\pm\ell\theta_n)^3},\ddot{w}_n\bigg)_0\bigg|\leq& C\|(w_n\pm\ell\theta_n)_{xt}\|_0\|\ddot{w}_n\|_0\\\leq& C\big(\|\ddot{w}_n\|^2_0+\|\dot{w}_n\|^2_{2}+\|\dot{\theta}_n\|^2_{2}\big).
	\end{split}
\end{equation*}
Therefore, looking at \eqref{melan42}, we infer
$$
|\big([f(w_n,\theta_n)]_{xt},\ddot{w}_n\big)_0+\big([\overline f(w_n,\theta_n)]_{xt},\ddot{\theta}_n\big)_0|\leq C\big(\|\ddot{w}_n\|^2_0+\|\dot{w}_n\|^2_{2}+\|\ddot{\theta}_n\|^2_{0}+\|\dot{\theta}_n\|^2_{2}\big).
$$

Applying estimates as \eqref{bd11} to the last right hand side term of \eqref{eq}, integrating over $(0,t)$ and repeating similar passages as in \eqref{step 2.2}, we get $C,c>0$ such that
\begin{equation*}\label{eq2}
	\begin{split}
		&\|\ddot{w}_n(t)\|_0^2+\|\dot{w}_n(t)\|^2_{2}+\|\ddot{\theta}_n(t)\|_0^2+\|\dot{\theta}_n(t)\|^2_{2}\\&\leq C\big(\|\ddot{w}_n(0)\|_0^2+\|\dot{w}_n(0)\|^2_{2}+\|\ddot{\theta}_n(0)\|_0^2+\|\dot{\theta}_n(0)\|^2_{2}+\|\dot{w}_n(0)\|^4_{2}+1\big)\\&\hspace{5mm}+ c\int_0^t\big(\|\ddot{w}_n(s)\|^2_0+\|\dot{w}_n(s)\|^2_{2}+\|\ddot{\theta}_n(s)\|^2_{0}+\|\dot{\theta}_n(s)\|^2_{2}\big)\,ds.
	\end{split}	
\end{equation*}
Hence, by Gronwall inequality, we obtain
$$
\|\ddot{w}_n(t)\|_2^0+\|\dot{w}_n(t)\|^2_{2}+\|\ddot{\theta}_n(t)\|_0^2+ \|\dot{\theta}_n(t)\|^2_{2}\leq C\big(\|\ddot{w}_n(0)\|_0^2+\|\dot{w}_n(0)\|^2_{2}+\|\ddot{\theta}_n(0)\|_0^2+\|\dot{\theta}_n(0)\|^2_{2}+\|\dot{w}_n(0)\|^4_{2}+1\big)e^{cT};
$$
since $w^1,\theta^1\in (H^2\cap H_0^1)(I)$, the uniform boundedness of $\|\dot{w}_n(0)\|_{2}$ and $\|\dot{\theta}_n(0)\|_{2}$ is obtained for all $t\in[0,T]$. 
Let us consider \eqref{eq weak sist lin gal  2n} in $t=0$; since 
$$
|\big(f(w_n(0),\theta_n(0)),e'_k\big)_0|\leq C\big(\|w_n(0)\|_{1}+\|\theta_n(0)\|_{1}\big)\leq C\big(\|w_0\|_{1}+\|\theta_0\|_{1}\big)
$$
and, similarly $|\big(\overline f(w_n(0),\theta_n(0)),e'_k\big)_0|$, we infer that also $\|\ddot{w}_n(0)\|_{0}$ and $\|\ddot{\theta}_n(0)\|_{0}$ are uniformly bounded for all $t\in[0,T]$. This implies that
$$
\|\ddot{w}_n(t)\|_0,\,\,\|\ddot{\theta}_n(t)\|_0 \qquad \|\dot{w}_n(t)\|_{2},\,\, \|\dot{\theta}_n(t)\|_{2}\quad \text{are uniformly bounded for all }t\in [0,T].
$$
Then by the equations
\begin{equation*}\label{strong}
	\begin{cases}
		(w_n)_{xxxx}=-(w_n)_{tt}-\mu (w_n)_t-[P-S\|w_n\|^2_1](w_n)_{xx}-[f(w_n,\theta_n)]_x+g-\beta \Upsilon(\theta_n)_t-\eta\theta_n\\
		(\theta_n)_{xxxx}=-\dfrac{1}{\epsilon}\bigg(\dfrac{\ell^2}{3}(\theta_n)_{tt}+\zeta (\theta_n)_t-\kappa(\theta_n)_{xx}+[\overline f(w_n,\theta_n)]_x\bigg),
	\end{cases}
\end{equation*}
we infer that $(w_n)_{xxxx}$ and $(\theta_n)_{xxxx}$ are uniformly bounded in $L^2(I)$ for all $t\in[0,T]$ and then, that $w_n$, $\theta_n$ are uniformly bounded in $H^4(I)$ for all $t\in[0,T]$. The  final regularity of the strong solution, as stated in the theorem, can be obtained arguing as in the proof of existence of weak solution. Recovery of the second order boundary conditions from the weak form \eqref{weak} is standard.

\textit{$\bullet$ Energy identity.} From \eqref{weak} and the uniqueness of a weak solution, we obtain that $(w,\theta)$ is the limit of the sequence $(w_n,\theta_n)$ built in the first step. Thanks to the strong convergence in \eqref{conv}, we can take the limit in \eqref{step 2.1} for each of the linear terms. To the nonlinearity $\Pi(w_n\pm\ell\theta_n)$ we apply \eqref{Pi}, so that it holds
\begin{equation*}
	\begin{split}
		|\Pi(w_n\pm\ell\theta_n)-\Pi(w\pm\ell\theta)|&\leq C\big(\|w_n(t)-w(t)\|_{1}+\|\theta_n(t)-\theta(t)\|_{1}\big)\\&\leq C\big(\|w_n-w\|_{C^0([0,T];H^1(I))}+\|\theta_n-\theta\|_{C^0([0,T];H^1(I))}\big)\rightarrow 0 \quad \text{as }n\rightarrow\infty,
	\end{split}
\end{equation*}
thanks to \eqref{conv}; applying \eqref{S}, we observe the strong convergence of the Woinowsky-Krieger nonlinearity capturing the stretching of the deck. Therefore, it is possible to pass the limit in all the terms of \eqref{step 2.1}, getting
$$\mathcal{E}(t)+\mu\int_0^t\|w_t(\tau)\|_0^2d\tau+\zeta \int_0^t\|\theta_t(\tau)\|_0^2d\tau=\mathcal{E}(0)-\beta \Upsilon\int_0^t\big(\theta_t(\tau),w_t(\tau)\big)_0d\tau-\eta\int_0^t\big(\theta(\tau),w_t(\tau)\big)_0d\tau.$$
The thesis follows considering $s$ instead of 0 as the lower bound of integration in the previous equality, and repeating the arguments similarly.

\subsection{Construction of an absorbing ball: proof of Proposition \ref{prop-absorbing}}\label{proof-abs}
We introduce a Lyapunov-type function depending on $\nu,\mu,\zeta>0$
\begin{equation}\label{V}
\begin{split}
V_{\nu,\mu,\zeta}(S_t(y)):=	&\mathcal{E}(t)+\nu \big(w_t(t),w(t)\big)_0+\dfrac{\nu\mu}{2}\|w(t)\|_0^2+ \nu\big(\theta_t(t),\theta(t)\big)_0+\dfrac{\nu\zeta}{2}\|\theta(t)\|_0^2+\\&\beta \Upsilon\big(\theta_t(t),w(t)\big)_0+\eta \big(\theta(t),w(t)\big)_0,
\end{split}
\end{equation}
where $S_t(y)=\big(w(t),w_t(t),\theta(t),\theta_t(t)\big)$ is as before for $t\geq 0$, and with $\nu$ to be specified later.
First of all we prove that the Lyapunov function is bounded by the positive portion of the energy.
\begin{lemma}\label{lemma0}
	There exists $\overline \nu(\mu,\zeta)>0$ such that if $\nu\in(0,\overline \nu)$, there are $c_0(\mu,\zeta)$, $c_1(\mu,\zeta)$ and\\ $c_2(\mu,\zeta,P,S,\beta,\Upsilon,\eta,\|g\|_0)>0$ so that
	\begin{equation}\label{eqq1}
		c_0E_+(t)-c_2\leq V_{\nu,\mu,\zeta}(S_t(y))\leq c_1E_+(t)+c_2.
	\end{equation}
\end{lemma}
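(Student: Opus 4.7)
\begin{pf}
The plan is to bound each of the perturbation terms that make up $V_{\nu,\mu,\zeta}(S_t(y))-\mathcal{E}(t)$, together with the indefinite portion of $\mathcal{E}$ itself, by a controllable multiple of $E_+$ plus a constant depending on the parameters. The two main tools are Young's inequality (with carefully tuned duality parameters) and the spectral/embedding bounds \eqref{spectral}, which let us dominate weaker norms ($\|\cdot\|_0$, $\|\cdot\|_1$) by the stronger ones already present in $E_+$.

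First I would rewrite $\mathcal{E}=E_+-\tfrac{P}{2}\|w\|_1^2-(g,w)_0$ and absorb the two indefinite pieces: Young's inequality gives $\tfrac{P}{2}\|w\|_1^2\leq \tfrac{S}{8}\|w\|_1^4+\tfrac{P^2}{2S}$, so half of the Woinowsky--Krieger contribution in $E_+$ easily soaks this up, and $|(g,w)_0|\leq \tfrac{1}{8}\|w\|_2^2+2\|g\|_0^2$ via Cauchy--Schwarz and \eqref{spectral}. Next I would estimate each of the six cross terms in $V-\mathcal{E}$:
\begin{align*}
\nu|(w_t,w)_0|&\leq \tfrac{\nu}{2}\|w_t\|_0^2+\tfrac{\nu}{2}\|w\|_2^2,\qquad \nu|(\theta_t,\theta)_0|\leq \tfrac{\nu}{2}\|\theta_t\|_0^2+\tfrac{\nu}{2}\|\theta\|_2^2,\\
\beta|\Upsilon|\,|(\theta_t,w)_0|&\leq \tfrac{\beta|\Upsilon|}{2\lambda}\|\theta_t\|_0^2+\tfrac{\beta|\Upsilon|\lambda}{2}\|w\|_2^2,\qquad |\eta|\,|(\theta,w)_0|\leq \tfrac{|\eta|}{2\mu_0}\|\theta\|_2^2+\tfrac{|\eta|\mu_0}{2}\|w\|_2^2,
\end{align*}
together with the trivial $\tfrac{\nu\mu}{2}\|w\|_0^2\leq \tfrac{\nu\mu}{2}\|w\|_2^2$ and $\tfrac{\nu\zeta}{2}\|\theta\|_0^2\leq \tfrac{\nu\zeta}{2}\|\theta\|_2^2$.

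The upper bound $V\leq c_1 E_++c_2$ is then immediate: summing the displays above, each term of $V$ is controlled by a fixed multiple of its analogue in $E_+$, with all the constants (in $P$, $S$, $\|g\|_0$, $\beta$, $\Upsilon$, $\eta$) collected into $c_2$. For the lower bound I would write
\[
V\geq E_+-\big|\tfrac{P}{2}\|w\|_1^2+(g,w)_0\big|-\sum(\text{cross-term bounds})
\]
and select $\lambda,\mu_0>0$ so that $\tfrac{\beta|\Upsilon|\lambda}{2}$ and $\tfrac{|\eta|\mu_0}{2}$ are each, say, at most $\tfrac{1}{16}$, and then choose $\overline\nu=\overline\nu(\mu,\zeta)>0$ small enough that the $\nu$-weighted losses on $\|w\|_2^2$ and $\|\theta\|_2^2$ leave strictly positive coefficients in front of every quadratic term of $E_+$. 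The $\tfrac{S}{4}\|w\|_1^4$ and $\Pi(w\pm\ell\theta)$ contributions remain intact (the latter being bounded below by a constant via the boundedness of $\xi_0$, absorbed into $c_2$).

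The main obstacle is that the Young duality parameters $\lambda^{-1},\mu_0^{-1}$ needed to make the $\|w\|_2^2$ deduction small automatically enlarge the losses on $\|\theta_t\|_0^2$ and $\|\theta\|_2^2$, so one must verify that the budgets $\tfrac{\ell^2}{6}\|\theta_t\|_0^2$ and $\tfrac{\epsilon}{2}\|\theta\|_2^2$ in $E_+$ still dominate after the $\nu$-terms are subtracted. This is precisely where the requirement $\nu<\overline\nu(\mu,\zeta)$ is invoked, and it foreshadows the restriction on $\epsilon$ that will appear in Proposition \ref{prop-absorbing}.
\end{pf}
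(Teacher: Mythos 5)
There is a genuine gap, and it sits exactly at the point you flag as ``the main obstacle''. The two flow cross terms $\beta\Upsilon(\theta_t,w)_0$ and $\eta(\theta,w)_0$ in \eqref{V} carry no factor of $\nu$, so shrinking $\nu$ does nothing to the losses $\tfrac{\beta|\Upsilon|}{2\lambda}\|\theta_t\|_0^2$ and $\tfrac{|\eta|}{2\mu_0}\|\theta\|_2^2$ created by your Young splittings. Once $\lambda$ and $\mu_0$ are fixed small enough that the companion $\|w\|_2^2$ losses are harmless, those two coefficients are of size $\beta^2\Upsilon^2$ and $\eta^2$, and for the lower bound in \eqref{eqq1} they must be dominated by the budgets $\tfrac{\ell^2}{6}\|\theta_t\|_0^2$ and $\tfrac{\epsilon}{2}\|\theta\|_2^2$ in $E_+$. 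Nothing in the hypotheses gives $\beta^2\Upsilon^2\lesssim\ell^2$ or $\eta^2\lesssim\epsilon$; worse, the regime relevant to Proposition \ref{prop-absorbing} is $\epsilon$ \emph{small} (the restriction there is an upper bound on $\epsilon$), which shrinks the $\|\theta\|_2^2$ budget, so the restriction you invoke works against you, not for you. Since $\|\theta\|_2^2$ is the top-order $\theta$-norm, a negative coefficient there cannot be repaired by lower-order terms or by enlarging $c_2$, so the lower bound fails along this route; moreover, even where your estimates do close, they produce $c_0,c_1$ depending on $\beta,\Upsilon,\eta,\epsilon,\ell$, whereas the statement (and its later use, uniformly in $\epsilon$) requires $c_0,c_1$ to depend only on $\mu,\zeta$.

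The missing idea, which is how the paper proceeds, is to send the \emph{heavy} side of each Young splitting onto $\|w\|_0^2$ rather than onto the $\theta$-norms, and then absorb an arbitrarily large multiple of $\|w\|_0^2$ into the Woinowsky--Krieger quartic term: by \eqref{spectral} and Young, $K\|w\|_0^2\le K\|w\|_1^2\le \tfrac{S\nu}{2}\tfrac{\|w\|_1^4}{4}+C(K,S,\nu)$, the constant going into $c_2$. Concretely one writes $\beta\Upsilon(\theta_t,w)_0\le \gamma_2\tfrac{\|\theta_t\|_0^2}{2}+\tfrac{\beta^2\Upsilon^2}{\gamma_2}\tfrac{\|w\|_0^2}{2}$ with $\gamma_2=\nu/\zeta$, so the $\theta_t$-loss is proportional to $\nu$ and is covered by the kinetic budget for $\nu$ below a threshold depending only on $\mu,\zeta$, and similarly $\eta(\theta,w)_0\le \gamma_3\tfrac{\|\theta\|_1^2}{2}+\tfrac{\eta^2}{\gamma_3}\tfrac{\|w\|_0^2}{2}$, so the $\theta$-loss sits on the $\kappa\|\theta\|_1^2$ budget and never touches $\epsilon\|\theta\|_2^2$; the large $\|w\|_0^2$ coefficients are then swallowed by $\tfrac{S}{4}\|w\|_1^4$ plus constants of the form $\beta^4\Upsilon^4/(S\nu\gamma_2^2)$ and $\eta^4/(S\nu\gamma_3^2)$. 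The same trick handles $\tfrac{P}{2}\|w\|_1^2$ and $(g,w)_0$ (as you did). This is what yields \eqref{eqq1} with $c_0,c_1=c_0,c_1(\mu,\zeta)$ and all of $P,S,\beta,\Upsilon,\eta,\|g\|_0$ relegated to $c_2$, uniformly in $\epsilon$ --- the uniformity that the absorbing-set argument later relies on.
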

\begin{proof}
	We claim there exists $M(\nu,P,S,\|g\|_0)>0$ such that
	\begin{equation}\label{ts1}
		(1-\nu)E_+-M\leq \mathcal{E}\leq (1+\nu)E_++M\qquad \forall\nu\in(0,1).
	\end{equation}
	Since $\mathcal{E}=E_+-\dfrac{P}{2}\|w\|_1^2-\big(g,w\big)_0$ we apply the H\"older and Young inequalities
	$$
	\dfrac{P}{2}\|w\|_1^2\leq \nu S\dfrac{\|w\|_1^4}{4}+\dfrac{P^2}{4S\nu}\qquad \big(g,w\big)_0\leq \nu\|w\|_0^2+\dfrac{1}{4\nu}\|g\|_0^2\leq \nu\|w\|_2^2+\dfrac{1}{4\nu}\|g\|_0^2,
	$$
	inferring \eqref{ts1} with $M=\frac{P^2}{4S\nu}+\frac{\|g\|_0^2}{4\nu}$.
	Looking at $V_{\nu,\mu,\zeta}$ in \eqref{V}, we need the following bounds for some $\gamma_i>0$
	\begin{equation}\label{bds}
		\begin{split}
			&\nu\big(w_t,w\big)_0\leq \gamma_1\dfrac{\|w_t\|_0^2}{2}+\dfrac{\nu^2}{\gamma_1}\dfrac{\|w\|_0^2}{2},\qquad\qquad  \nu\big(\theta_t,\theta\big)_0\leq \gamma_2\dfrac{\|\theta_t\|_0^2}{2}+\dfrac{\nu^2}{\gamma_2}\dfrac{\|\theta\|_0^2}{2},\\
			&	\beta\Upsilon\big(\theta_t,w\big)_0\leq \gamma_2\dfrac{\|\theta_t\|_0^2}{2}+\dfrac{\beta^2\Upsilon^2}{\gamma_2}\dfrac{\|w\|_0^2}{2}\leq \gamma_2\dfrac{\|\theta_t\|_0^2}{2}+\dfrac{S\nu}{2}\dfrac{\|w\|_1^4}{4}+\dfrac{\beta^4\Upsilon^4}{2S\nu\gamma_2^2},\\
			&	\eta\big(\theta,w\big)_0\leq \gamma_3\dfrac{\|\theta\|_0^2}{2}+\dfrac{\eta^2}{\gamma_3}\dfrac{\|w\|_0^2}{2}\leq \gamma_3\dfrac{\|\theta\|_1^2}{2}+\dfrac{S\nu}{2}\dfrac{\|w\|_1^4}{4}+\dfrac{\eta^4}{2S\nu\gamma_3^2}.
		\end{split}
	\end{equation}
	Therefore, we have
	\begin{equation*}
		\begin{split}
			\nu \big(w_t,&w\big)_0+\dfrac{\nu\mu}{2}\|w\|_0^2+ \nu\big(\theta_t,\theta\big)_0+\dfrac{\nu\zeta}{2}\|\theta\|_0^2+\beta \Upsilon\big(\theta_t,w\big)_0+\eta \big(\theta,w\big)_0\\
			\geq &-\gamma_1\dfrac{\|w_t\|_0^2}{2}-2\gamma_2\dfrac{\|\theta_t\|_0^2}{2}-\gamma_3\dfrac{\|\theta\|_1^2}{2}+\bigg(\nu\mu-\dfrac{\nu^2}{\gamma_1}\bigg)\dfrac{\|w\|^2_0}{2}+\bigg(\nu\zeta-\dfrac{\nu^2}{\gamma_2}\bigg)\dfrac{\|\theta\|^2_0}{2}\\&-S\nu\dfrac{\|w\|_1^4}{4}-\dfrac{\beta^4\Upsilon^4}{2S\nu\gamma_2^2}-\dfrac{\eta^4}{2S\nu\gamma_3^2},
		\end{split}
	\end{equation*}
	hence, taking $\gamma_1=\nu/\mu$, $\gamma_2=\nu/\zeta$ and $\gamma_3=\nu\kappa$, we find from \eqref{ts1}
	\begin{equation*}
		\begin{split}
			V_{\nu,\mu,\zeta}
			\geq& \bigg[1-\nu\bigg(\dfrac{\mu+1}{\mu}\bigg)\bigg]\dfrac{\|w_t\|_0^2}{2}+\bigg[1-\nu\bigg(\dfrac{\zeta+2}{\zeta}\bigg)\bigg]\dfrac{\|\theta_t\|_0^2}{2}+(1-2\nu)\kappa\dfrac{\|\theta\|_1^2}{2}+(1-2\nu)S\dfrac{\|w\|_1^4}{4}\\&-M-\dfrac{\beta^4\Upsilon^4\zeta^2}{2S\nu^3}-\dfrac{\eta^4}{2S\nu^3\kappa^2}\geq c_0E_+-c_2,
		\end{split}
	\end{equation*}
	for all $0<\nu<\min\bigg\{\dfrac{1}{2},\dfrac{\mu}{\mu+1},\dfrac{\zeta}{\zeta+2}\bigg\}:=\overline\nu$. Next, with the same choices of the parameters $\nu,\gamma_i$ and \eqref{bds} we obtain 
	\begin{equation*}
		\begin{split}
			&\nu \big(w_t,w\big)_0+\dfrac{\nu\mu}{2}\|w\|_0^2+ \nu\big(\theta_t,\theta\big)_0+\dfrac{\nu\zeta}{2}\|\theta\|_0^2+\beta \Upsilon\big(\theta_t,w\big)_0+\eta \big(\theta,w\big)_0\\
			&\quad\leq \dfrac{\nu}{\mu}\dfrac{\|w_t\|_0^2}{2}+2\frac{\nu}{\zeta}\dfrac{\|\theta_t\|_0^2}{2}+\nu\kappa\dfrac{\|\theta\|_1^2}{2}+2\nu\mu\dfrac{\|w\|^2_0}{2}+2\nu\zeta\dfrac{\|\theta\|^2_0}{2}+S\nu\dfrac{\|w\|_1^4}{4}+\dfrac{\beta^4\Upsilon^4\zeta^2}{2S\nu^3}+\dfrac{\eta^4}{2S\nu^3\kappa^2}
		\end{split}
	\end{equation*}
	and, in turn, the thesis \eqref{eqq1}.
\end{proof}

We want a bound on the derivative of the Lyapunov function introduced in \eqref{V}. To do this we need a preliminary bound on the $L^2(I)$ norm of the nonlinear terms related to the cables and a lemma providing control of lower frequencies in the $w$ dynamics.

The following lemma is easily adapted from \cite{bongazlasweb,holawe}, where the proof is provided; also see \cite{HHWW}.
\begin{lemma}\cite[Lemma 4.8]{bongazlasweb}\label{eps-lemma}
	For any $s\in(0,2]$ and $\gamma>0$ there exists $C_{\gamma,s}>0$ such that
	\begin{equation*}\label{eps-ineq}
		\|w\|^2_{2-s}\leq \gamma\big(\|w\|_2^2+\|w\|_1^4\big)+C_{\gamma,s}\qquad \forall w\in H^2\cap H_0^1(I).
	\end{equation*}
\end{lemma}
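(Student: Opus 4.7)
The plan is to reduce the claim to a standard interpolation between $L^2$ and $H^2$, followed by two uses of Young's inequality. Since the interpolation yields a mixed term involving $\|w\|_0$, I will dispose of the $L^2$ portion by the spectral (Poincar\'e) inequality $\|w\|_0 \leq \|w\|_1$ from \eqref{spectral} and then trade $\|w\|_1^2$ against $\|w\|_1^4$ using a scalar Young's inequality, absorbing the remainder into the additive constant.

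More precisely, first I would note that standard Hilbert-space interpolation between $L^2(I)$ and $(H^2\cap H^1_0)(I)$ (as may be realized via the eigenbasis $\{\sin(kx)\}$ that diagonalizes both norms) yields
\begin{equation*}
\|w\|_{2-s}^{2}\leq C_s\,\|w\|_0^{s}\,\|w\|_2^{\,2-s}\qquad\forall\,w\in (H^2\cap H^1_0)(I),\quad s\in[0,2].
\end{equation*}
For $s\in(0,2)$, applying Young's inequality with the conjugate exponents $p=2/(2-s)$ and $q=2/s$ to the product $\|w\|_2^{2-s}\cdot\|w\|_0^{s}$ gives, for any $\gamma_1>0$,
\begin{equation*}
C_s\,\|w\|_0^{s}\,\|w\|_2^{\,2-s}\leq \gamma_1\|w\|_2^{2}+C(\gamma_1,s)\,\|w\|_0^{2}.
\end{equation*}
The endpoint $s=2$ is trivial (the interpolation gives $\|w\|_0^2$ directly, so this step is vacuous).

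Next, I would apply \eqref{spectral} to obtain $\|w\|_0^2\leq \|w\|_1^2$, and then use the elementary scalar Young inequality
\begin{equation*}
t\leq \gamma_2\, t^{2}+\frac{1}{4\gamma_2}\qquad\forall\,t\geq 0,\ \gamma_2>0,
\end{equation*}
applied with $t=\|w\|_1^2$, to conclude
\begin{equation*}
\|w\|_0^{2}\leq \gamma_2\,\|w\|_1^{4}+\tfrac{1}{4\gamma_2}.
\end{equation*}
Chaining these, for any given $\gamma>0$ one chooses $\gamma_1$ and $\gamma_2$ so that $\gamma_1\leq\gamma$ and $C(\gamma_1,s)\,\gamma_2\leq\gamma$; this yields
\begin{equation*}
\|w\|_{2-s}^{2}\leq \gamma\bigl(\|w\|_2^{2}+\|w\|_1^{4}\bigr)+C_{\gamma,s},
\end{equation*}
with $C_{\gamma,s}:=C(\gamma_1,s)/(4\gamma_2)$, proving the claim. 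No step is particularly hard; the only care required is in the bookkeeping of the constants and in verifying the interpolation in the precise scale used in the paper, which is justified by the spectral characterization of the norms on $(H^2\cap H^1_0)(I)$.
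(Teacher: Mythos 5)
Your proof is correct. Each step checks out: in the spectral scale generated by the basis $\sin(kx)$ (which realizes all the norms $\|\cdot\|_\sigma$, $\sigma\in[0,2]$, used in the paper) the interpolation $\|w\|_{2-s}^2\leq C_s\|w\|_0^{s}\|w\|_2^{2-s}$ follows from H\"older's inequality on the Fourier coefficients, the weighted Young step with exponents $2/(2-s)$ and $2/s$ is legitimate, $\|w\|_0\leq\|w\|_1$ is exactly \eqref{spectral}, and the scalar inequality $t\leq\gamma_2 t^2+\tfrac{1}{4\gamma_2}$ handles the quartic trade-off; the bookkeeping of $\gamma_1,\gamma_2$ and the endpoint $s=2$ are fine. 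Note, however, that you took a genuinely different route from the source: the paper does not prove this lemma at all but cites \cite{bongazlasweb} (see also \cite{holawe}), where the corresponding estimate is obtained by a non-constructive compactness/contradiction argument --- one assumes the functional $\|w\|_2^2+\|w\|_1^4-A\|w\|_{2-s}^2$ is unbounded below, normalizes by $\|w_n\|_{2-s}$, extracts a weakly convergent subsequence, and uses the quartic term to force the weak limit to be a constant in $H_0^1$, hence zero, a contradiction. That scheme gives no explicit constant $C_{\gamma,s}$ but adapts readily to more complicated lower-order functionals (e.g.\ versions of the estimate involving the cable energy $\Pi$, where a clean interpolation is unavailable). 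Your direct argument, by contrast, is fully constructive, yields an explicit constant of the form $C(\gamma,s)^2/(4\gamma)$, and stays entirely within elementary interpolation and Young inequalities, which is arguably the cleaner proof for the scalar statement actually needed here.
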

We are ready now to prove a bound on the derivative of $V_{\nu,\mu,\zeta}(S_t(y))$ introduced in \eqref{V}.
\begin{lemma}\label{lemma3}
	Let $V_{\nu,\mu,\zeta}$ be as in \eqref{V}. For all $\mu,\zeta>0$ there exist $\overline \nu(\mu,\zeta)$, $\overline\epsilon(\nu, \beta,\ell)>0$ such that if $\nu\in(0,\overline\nu)$ and $\epsilon\in(0,\overline\epsilon)$, then there are $c_3(\mu,\zeta,\nu,\kappa,\epsilon,P,S,\beta,\Upsilon,\eta,\ell,c_c)$, $c_4(\nu,\ell,\|g\|_0,C_c,\overline C_c,\overline c_c)>0$ so that
	\begin{equation}\label{ineq3}
		\dfrac{d}{dt} V_{\nu,\mu,\zeta}(S_t(y))\leq -c_3E_+(t)+c_4.
	\end{equation}
\end{lemma}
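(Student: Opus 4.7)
The plan is to differentiate $V_{\nu,\mu,\zeta}$ along trajectories, replace $\langle w_{tt},\cdot\rangle$ and $\langle\theta_{tt},\cdot\rangle$ using the weak form \eqref{weak}, and then carefully absorb all the ``junk'' into the leading dissipative and stabilizing terms.

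First, $\frac{d\mathcal{E}}{dt}=-\mu\|w_t\|_0^2-\zeta\|\theta_t\|_0^2-\beta\Upsilon(\theta_t,w_t)_0-\eta(\theta,w_t)_0$ by the energy identity \eqref{energy}. Differentiating $\beta\Upsilon(\theta_t,w)_0$ and $\eta(\theta,w)_0$, the pieces $\beta\Upsilon(\theta_t,w_t)_0$ and $\eta(\theta,w_t)_0$ appear with opposite sign, cancelling the non-dissipative contributions from $\frac{d\mathcal{E}}{dt}$ exactly; this is the point of including these two cross terms in $V$. For the remaining multipliers, I will test \eqref{weak}$_1$ with $v=w$ (giving $\langle w_{tt},w\rangle$), test \eqref{weak}$_2$ with $\varphi=\theta$ (giving $\langle\theta_{tt},\theta\rangle$, with the $\ell^2/3$ inertia producing a $3/\ell^2$ factor on the stabilizing terms), and test \eqref{weak}$_2$ with $\varphi=w$ (giving $\langle\theta_{tt},w\rangle$, needed to handle $\frac{d}{dt}\beta\Upsilon(\theta_t,w)_0$). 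After cancellation and collecting, the leading \emph{negative definite} contributions are
\[
-(\mu-\nu)\|w_t\|_0^2 \;-\;(\zeta-\nu)\|\theta_t\|_0^2 \;-\;\nu\|w\|_2^2\;-\;\nu S\|w\|_1^4\;-\;\tfrac{3\nu\epsilon}{\ell^2}\|\theta\|_2^2\;-\;\tfrac{3\nu\kappa}{\ell^2}\|\theta\|_1^2,
\]
plus the admissible $-\Pi$ pieces to be harvested below.

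The residual terms to absorb are: (i) $\nu P\|w\|_1^2$ and $\nu(g,w)_0$; (ii) the cable pairings $\nu(f,w_x)_0$, $\tfrac{3\nu}{\ell^2}(\overline f,\theta_x)_0$ and $\tfrac{3\beta\Upsilon}{\ell^2}(\overline f,w_x)_0$; (iii) the lower-order flow couplings $-\nu\beta\Upsilon(\theta_t,w)_0$, $-\nu\eta(\theta,w)_0$, $\eta(\theta_t,w)_0$, plus the leftover $\nu\zeta(1-\tfrac{3}{\ell^2})(\theta_t,\theta)_0$; and (iv) the cross-stiffness terms $-\tfrac{3\beta\Upsilon\epsilon}{\ell^2}(\theta_{xx},w_{xx})_0$ and $-\tfrac{3\beta\Upsilon\kappa}{\ell^2}(\theta_x,w_x)_0$ coming from $\beta\Upsilon\langle\theta_{tt},w\rangle$. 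The cable term $\nu(f,w_x)_0$ is rewritten as
\[
\nu\big(h(w+\ell\theta),(w+\ell\theta)_x\big)_0+\nu\big(h(w-\ell\theta),(w-\ell\theta)_x\big)_0-\nu\ell\big(h(w+\ell\theta)-h(w-\ell\theta),\theta_x\big)_0,
\]
and the weak bound \eqref{h-weak} of Lemma \ref{lemmah} yields $-\nu[\Pi(w+\ell\theta)+\Pi(w-\ell\theta)]$ plus terms controlled by $\|w\pm\ell\theta\|_1$; the stray correction and the pairings $(\overline f,\theta_x)_0$, $(\overline f,w_x)_0$ are instead estimated via Cauchy--Schwarz together with the $L^2$ bound \eqref{h-L2}, producing expressions quadratic in $\|w\|_1$, $\|\theta\|_1$.

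The key absorption step is as follows. For all remaining $\|w\|_1$- and $\|w\|_1^2$-terms (including $\nu P\|w\|_1^2$ and the cable contributions) I apply Lemma \ref{eps-lemma}, which gives $\|w\|_1^2\leq\gamma(\|w\|_2^2+\|w\|_1^4)+C_\gamma$, choosing $\gamma$ small so these are absorbed into $-\nu\|w\|_2^2-\nu S\|w\|_1^4$. For $\|\theta\|_1$-type terms (which Lemma \ref{eps-lemma} does not address, as $\theta$ has no superquadratic nonlinearity), I simply use $\|\theta\|_1\leq\|\theta\|_2$ from \eqref{spectral} and absorb into $-\tfrac{3\nu\epsilon}{\ell^2}\|\theta\|_2^2$. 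The lower-order flow couplings in (iii) are handled by Young's inequality with small parameter, routing $\|w\|_0\leq\|w\|_2$, $\|\theta\|_0\leq\|\theta\|_2$, and $(\theta_t,\cdot)_0$ into $(\zeta-\nu)\|\theta_t\|_0^2$. The forcing contributes only a constant via $\nu|(g,w)_0|\leq\nu\|g\|_0\|w\|_2\leq \tfrac{\nu}{2}\|w\|_2^2+\tfrac{\nu}{2}\|g\|_0^2$, which is where part of $c_4$ arises.

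The delicate absorption, and the \textbf{main obstacle}, is the cross-stiffness term in (iv). Using $|\Upsilon|\leq\ell$ and a weighted Young inequality,
\[
\Big|\tfrac{3\beta\Upsilon\epsilon}{\ell^2}(\theta_{xx},w_{xx})_0\Big|\;\leq\;\tfrac{3\beta\epsilon}{\ell}\Big(\tfrac{\alpha}{2}\|\theta\|_2^2+\tfrac{1}{2\alpha}\|w\|_2^2\Big),
\]
so to dominate by $\tfrac{3\nu\epsilon}{\ell^2}\|\theta\|_2^2+\tfrac{\nu}{2}\|w\|_2^2$ one needs $\tfrac{3\beta\alpha}{\ell}\leq\tfrac{3\nu}{\ell^2}$ and $\tfrac{3\beta\epsilon}{\ell\alpha}\leq\tfrac{\nu}{2}$; eliminating $\alpha$ yields precisely a smallness condition of the form $\epsilon<\ell^2\overline\nu^{\,2}/(3\beta^2)$ after the choice of the admissible $\overline\nu=\overline\nu(\mu,\zeta)$ inherited from Lemma \ref{lemma0}. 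The analogous $\kappa$-cross term is absorbed similarly (in fact more easily, since it involves $\|\cdot\|_1$-products). With these choices, collecting all contributions gives $\tfrac{d}{dt}V_{\nu,\mu,\zeta}\leq -c_3\big(\|w_t\|_0^2+\|w\|_2^2+\|w\|_1^4+\|\theta_t\|_0^2+\|\theta\|_2^2+\|\theta\|_1^2+\Pi(w+\ell\theta)+\Pi(w-\ell\theta)\big)+c_4$, which is \eqref{ineq3}.
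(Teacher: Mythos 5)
Your overall architecture matches the paper's: differentiate $V_{\nu,\mu,\zeta}$, cancel the non-conservative terms via the energy identity, use the $w$- and $\theta$-equations as multipliers (including substituting the $\theta$-equation for $(\theta_{tt},w)_0$), harvest $-\Pi(w\pm\ell\theta)$ via \eqref{h-weak}, control $\|w\|_1^2$-terms through Lemma \ref{eps-lemma}, and extract the smallness condition $\epsilon\lesssim \nu^2\ell^2/\beta^2$ from the cross-stiffness term $\tfrac{3\beta\Upsilon\epsilon}{\ell^2}(\theta,w)_2$ exactly as the paper does. However, there is a genuine gap in how you absorb the $\theta$-quadratic remainders. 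You propose to bound every leftover $\|\theta\|_1$-type term by $\|\theta\|_2$ and absorb it into $-\tfrac{3\nu\epsilon}{\ell^2}\|\theta\|_2^2$. This cannot work: the lemma must hold for \emph{all} $\epsilon\in(0,\overline\epsilon)$ with $\overline\epsilon$ small, so the available negative $\|\theta\|_2^2$-budget degenerates as $\epsilon\to 0$, while the coefficients of the terms you want to absorb (the cable corrections scaled by $c_c$, the $\kappa$-cross term, the couplings scaled by $\eta$, $\beta$) are fixed and independent of $\epsilon$. The only $\epsilon$-independent negative $\theta$-budgets are $\nu\kappa\|\theta\|_1^2$ and $(\zeta-\nu)\|\theta_t\|_0^2$; the paper's essential device is therefore to arrange \emph{every} Young splitting so that the $\theta$-side coefficient is a free small parameter $\eps$ (chosen later relative to $\nu\kappa$ and $\zeta$, with $\eps_1$ taken proportional to $\nu\epsilon$ only for the $\|\theta\|_2^2$ piece of the cross-stiffness term), while all the resulting large constants land on $\|w\|_0^2$ or $\|w\|_1^2$, which Lemma \ref{eps-lemma} then absorbs into $-\nu(\|w\|_2^2+S\|w\|_1^4)$ at the cost of an additive constant in $c_4$. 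The same objection applies to your routing of $\|w\|_0\le\|w\|_2$ directly into $-\nu\|w\|_2^2$ when the accompanying coefficient (e.g.\ $(\eta-3\beta\Upsilon\zeta/\ell^2-\beta\Upsilon\nu)^2/\zeta$) need not be smaller than $\nu$; such terms must also be funneled through Lemma \ref{eps-lemma}.

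A second, related problem is your treatment of the cable terms. Decomposing $\nu(f,w_x)_0$ alone leaves the stray correction $-\nu(\overline f,\theta_x)_0$, which does not match the coefficient $\tfrac{3\nu}{\ell^2}$ on $(\overline f,\theta_x)_0$ coming from the $\theta$-multiplier (unless $\ell^2=3$), and you propose to estimate the leftover by Cauchy--Schwarz with \eqref{h-L2}. But $|(\overline f,\theta_x)_0|\le \|\overline f\|_0\|\theta\|_1$ produces, via \eqref{h-L2}, a term of size $\sim \nu\ell^2\sqrt{c_c}\,\|\theta\|_1^2$ with a \emph{fixed} coefficient that cannot be made small by any choice of Young parameter (both factors involve $\theta$), and which neither $\nu\kappa\|\theta\|_1^2$ nor $-\nu\Pi$ dominates in general. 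The paper avoids this by combining the two pairings exactly, $(f,w_x)_0+(\overline f,\theta_x)_0=(h(w+\ell\theta),(w+\ell\theta)_x)_0+(h(w-\ell\theta),(w-\ell\theta)_x)_0$, so that \eqref{h-weak} yields $-\Pi(w+\ell\theta)-\Pi(w-\ell\theta)$ plus remainders that are only \emph{linear} in $\|w\|_1,\|\theta\|_1$ (hence convertible to $\eps\|\theta\|_1^2+C$); only the genuinely mismatched term $\tfrac{3\beta\Upsilon}{\ell^2}(\overline f,w_x)_0$ goes through \eqref{h-L2}, and there the quadratic $\theta$-content can legitimately receive the small Young weight because its partner is $w_x$. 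To repair your argument you should either adjust the cross terms in $V$ so the coefficients on $(f,w_x)_0$ and $(\overline f,\theta_x)_0$ coincide and combine them before invoking \eqref{h-weak}, and in all remaining estimates keep the $\theta$-side weights tunably small with the large constants routed onto $w$-quantities handled by Lemma \ref{eps-lemma}.
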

\begin{proof}
	We suppose that $y(t)=\big(w(t),w_t(t),\theta(t),\theta_t(t)\big)$ is a smooth solution of \eqref{eq_sist1} (we can extend by density to weak solutions as the final step). Then we compute the time derivative of $V_{\nu,\mu,\zeta}$, i.e.
	\begin{equation*}
		\begin{split}
			\dfrac{d}{dt} V_{\nu,\mu,\zeta}&=\dfrac{d}{dt}\mathcal{E}+\nu (w_{tt},w)_0+\nu\|w_t\|_0^2+\nu\mu(w,w_t)_0+ \nu(\theta_{tt},\theta)_0+\nu\|\theta_t\|_0^2+\nu\zeta(\theta,\theta_t)_0\\&+\beta \Upsilon\dfrac{d}{dt}\big(\theta_t,w\big)_0+\eta\dfrac{d}{dt} \big(\theta,w\big)_0.
		\end{split}
	\end{equation*} 
	From \eqref{energy} we infer
	$$
	\dfrac{d}{dt}\mathcal{E}=-\mu\|w_t\|_0^2-\zeta \|\theta_t\|_0^2-\beta \Upsilon(\theta_t,w_t)_0-\eta(\theta,w_t)_0,
	$$
	while testing the equations in \eqref{weak} respectively by $w$ and $\theta$ we find 
	\begin{equation*}
		\begin{split}
			\dfrac{d}{dt} V_{\nu,\mu,\zeta}&=(\nu-\mu)\|w_t\|_0^2+(\nu-\zeta) \|\theta_t\|_0^2+P\nu\|w\|^2_1-\nu\|w\|^2_{2}- \nu\epsilon\|\theta\|^2_{2}-\nu\kappa\|\theta\|^2_{1}+\\&-\beta \Upsilon(\theta_t,w_t)_0-\eta(\theta,w_t)_0-\beta \Upsilon\nu(\theta_t,w)_0-\nu\eta(\theta,w)_0+ \nu(g,w)_0\\&-S\nu\|w\|^4_1+\nu(f(w,\theta),w_x)_0+\nu(\overline f(w,\theta),\theta_x)_0
			+\beta \Upsilon\dfrac{d}{dt}\big(\theta_t,w\big)_0+\eta\dfrac{d}{dt} \big(\theta,w\big)_0.
		\end{split}
	\end{equation*}
	We rewrite the following terms using the product rule in time as
	\begin{equation*}
		\begin{split}
			(\theta,w_t)_0&=\dfrac{d}{dt} \big(\theta,w\big)_0-\big(\theta_t,w\big)_0\\
			(\theta_t,w_t)_0&=\dfrac{d}{dt}\big(\theta_t,w\big)_0-\big(\theta_{tt},w\big)_0\\&=\dfrac{d}{dt}\big(\theta_t,w\big)_0+\dfrac{3\zeta}{\ell^2}\big(\theta_t,w\big)_0 +\dfrac{3\epsilon}{\ell^2}\big(\theta,w\big)_2+\dfrac{3\kappa}{\ell^2}\big(\theta,w)_1-\dfrac{3}{\ell^2}\big(\overline{ f}(w,\theta),w_x\big)_0,
		\end{split}
	\end{equation*}
	where we used \eqref{eq_sist1}$_2$. Hence, we obtain
	\begin{equation}\label{ineq}
		\begin{split}
			\dfrac{d}{dt} V_{\nu,\mu,\zeta}&=(\nu-\mu)\|w_t\|_0^2+(\nu-\zeta) \|\theta_t\|_0^2+P\nu\|w\|^2_1-\nu\|w\|^2_{2}-\nu\epsilon\|\theta\|^2_{2}-\nu\kappa\|\theta\|^2_{1}-S\nu\|w\|^4_1\\&+(\eta-\beta \Upsilon\zeta\tfrac{3}{\ell^2}-\beta \Upsilon\nu)\big(\theta_t,w\big)_0 -\beta \Upsilon\epsilon\tfrac{3}{\ell^2}\big(\theta,w\big)_2-\beta \Upsilon\kappa\tfrac{3}{\ell^2}\big(\theta,w)_1-\nu\eta(\theta,w)_0+ \nu(g,w)_0\\&+\nu(f(w,\theta),w_x)_0+\nu(\overline f(w,\theta),\theta_x)_0+\beta \Upsilon\tfrac{3}{\ell^2}\big(\overline{ f}(w,\theta),w_x\big)_0.
		\end{split}
	\end{equation}
	
	Next we bound the right hand side of \eqref{ineq}. 
	First of all we apply the H\"older and Young inequalities ($\eps_1,\eps>0$) 
	\begin{equation*}
		\begin{split}
			&|(\eta-\beta \Upsilon\zeta-\beta \Upsilon\nu)\big(\theta_t,w\big)_0|\leq \zeta\dfrac{\|\theta_t\|^2_0}{2}+(\eta-\beta \Upsilon\zeta-\beta \Upsilon\nu)^2\dfrac{\|w\|^2_0}{2\zeta}\\&|\beta \Upsilon\epsilon\tfrac{3}{\ell^2}\big(\theta,w\big)_2|\leq \eps_1\dfrac{\|\theta\|^2_2}{2}+\dfrac{9\beta^2\Upsilon^2\epsilon^2}{\eps_1\ell^4}\dfrac{\|w\|_2^2}{2}\qquad|\beta \Upsilon\kappa\tfrac{3}{\ell^2}\big(\theta,w)_1|\leq \eps\dfrac{\|\theta\|^2_1}{2}+\dfrac{9\beta^2\Upsilon^2\kappa^2}{\eps\ell^4}\dfrac{\|w\|_1^2}{2}\\ &|\nu\eta(\theta,w)_0|\leq \eps\dfrac{\|\theta\|^2_0}{2}+\dfrac{\nu^2\eta^2}{\eps}\dfrac{\|w\|_0^2}{2}\qquad\hspace{15mm}
			|\nu(g,w)_0|\leq \eps\dfrac{\|w\|^2_0}{2}+\dfrac{\nu^2}{\eps}\frac{\|g\|^2_0}{2}.
		\end{split}
	\end{equation*}
	%%About the cable nonlinearity, recalling \eqref{melan42}, we apply H\"older, Young inequalities ($\eps>0$) and \eqref{h-L2} we obtain
	\begin{equation*}
		\begin{split}
			|\beta \Upsilon\tfrac{3}{\ell^2}(\overline f(w,\theta),w_x)_0|=&|\beta \Upsilon\tfrac{3}{\ell}(h(w+\ell\theta)-h(w-\ell\theta),w_x)_0|\\\leq& \dfrac{\eps}{2}\|h(w+\ell\theta)-h(w-\ell\theta)\|_0^2+\beta^2\Upsilon^2\frac{9}{\ell^2}\dfrac{\|w\|^2_1}{2\eps}\\\leq& c_c\eps\big(\|w_x+\ell\theta_x\|^2_{L^1(I)}+\|w_x-\ell\theta_x\|^2_{L^1(I)}\big)+\beta^2\Upsilon^2\frac{9}{\ell^2}\dfrac{\|w\|^2_1}{2\eps}+C\\\leq& 4\pi c_c\eps\big(\|w\|_1^2+\ell^2\|\theta\|^2_{1}\big)+\beta^2\Upsilon^2\frac{9}{\ell^2}\dfrac{\|w\|^2_1}{2\eps}+C,
		\end{split}
	\end{equation*}
	where we apply \eqref{h-L2}.
	The remaining cable nonlinear terms can be combined, recalling \eqref{melan42}; through \eqref{h-weak} we find
	\begin{equation*}
		\begin{split}
			(f(w,\theta),w_x)_0+&(\overline f(w,\theta),\theta_x)_0=\big(h(w+\ell\theta),w_x+\ell\theta_x\big)_0+\big(h(w-\ell\theta),w_x-\ell\theta_x\big)_0\\
			\leq &-\Pi(w+\ell\theta)-\Pi(w-\ell\theta)+C_c\|w_x+\ell\theta_x\|_{L^1(I)}+C_c\|w_x-\ell\theta_x\|_{L^1(I)}+2\overline C_c\\
			\leq &-\Pi(w+\ell\theta)-\Pi(w-\ell\theta)+\eps\dfrac{\|w\|^2_1}{2\nu} +\eps\dfrac{\|\theta\|^2_1}{2\nu}+C,
		\end{split}
	\end{equation*} 
	so that \eqref{ineq} becomes
	\begin{equation*}\label{ineq0}
		\begin{split}
			\dfrac{d}{dt} V_{\nu,\mu,\zeta}\leq&~(\nu-\mu)\|w_t\|_0^2+(2 \nu-\zeta) \dfrac{\|\theta_t\|_0^2}{2}-S\nu\|w\|^4_1\\
			&+\bigg(\dfrac{(\eta-\beta \Upsilon\zeta-\beta \Upsilon\nu)^2}{\zeta}+\frac{\nu^2\eta^2}{\eps}+\eps\bigg)\dfrac{\|w\|_0^2}{2}+\bigg(2P\nu+\dfrac{9\beta^2\Upsilon^2\kappa^2}{\eps\ell^4}+\dfrac{9\beta^2\Upsilon^2}{\eps\ell^2}\bigg)\dfrac{\|w\|^2_1}{2}\\&+\bigg(\dfrac{9\beta^2\Upsilon^2\epsilon^2}{\eps_1\ell^4}-2\nu\bigg)\frac{\|w\|^2_{2}}{2}+\eps\dfrac{\|\theta\|^2_0}{2}+(\eps-2\nu\kappa)\dfrac{\|\theta\|^2_1}{2}+(\eps_1-2 \nu\epsilon)\frac{\|\theta\|^2_{2}}{2}+\dfrac{\nu^2}{\eps}\frac{\|g\|^2_0}{2}\\&-\nu\Pi(w+\ell\theta)-\nu\Pi(w-\ell\theta)+\eps\dfrac{\|w\|^2_1}{2} +\eps\dfrac{\|\theta\|^2_1}{2}\\&+4\pi c_c\eps\big(\|w\|_1^2+\ell^2\|\theta\|^2_{1}\big)+\beta^2 \Upsilon^2\dfrac{9}{\ell^2}\dfrac{\|w\|^2_1}{2\eps}+C.
		\end{split}
	\end{equation*} 
	Using \eqref{spectral} and collecting the terms we find
	\begin{equation}\label{ineq4}
		\begin{split}
			\dfrac{d}{dt} V_{\nu,\mu,\zeta}&\leq(\nu-\mu)\|w_t\|_0^2+(2\nu-\zeta) \dfrac{\|\theta_t\|_0^2}{2}+\big(\eps(3+8\pi c_c\ell^2)-2\nu\kappa\big)\frac{\|\theta\|^2_{1}}{2}+(\eps_1-2\nu\epsilon)\frac{\|\theta\|^2_{2}}{2}\\&-\nu\Pi(w+\ell\theta)-\nu\Pi(w-\ell\theta)-S\nu\|w\|^4_1+\bigg(\dfrac{9\beta^2\Upsilon^2\epsilon^2}{\eps_1\ell^4}-2\nu\bigg)\frac{\|w\|^2_{2}}{2}\\
			&+\bigg(2\eps+8\pi c_c\eps+2P\nu+\dfrac{(\eta-\beta \Upsilon\zeta-\beta \Upsilon\nu)^2}{\zeta}+\frac{\nu^2\eta^2}{\eps}+\dfrac{9\beta^2\Upsilon^2\kappa^2}{\eps\ell^4}+\dfrac{9\beta^2 \Upsilon^2}{\eps\ell^2} \bigg)\frac{\|w\|^2_{1}}{2} +C.
		\end{split}
	\end{equation}
	To guarantee the negativity of the terms on the first and second lines of right hand side of \eqref{ineq4} we choose $\eps_1= \frac{3}{2}\nu\epsilon$ and
	\begin{equation}\label{parameters}
		0<\nu<\min\{\mu,\zeta/2\}:=\overline\nu, \qquad0<\eps<\frac{2\nu\kappa}{3+8\pi c_c\ell^2}\qquad 0<\epsilon<\frac{ \nu^2\ell^2}{3\beta^2}:=\overline \epsilon.
	\end{equation}
	From Lemma \ref{eps-lemma} we infer the existence of $\gamma>0$ and $C_\gamma>0$ such that
	$$
	\|w\|^2_{1}\leq \gamma\big(\|w\|_2^2+\|w\|_1^4\big)+C_{\gamma}\qquad \forall w\in H^2\cap H_0^1(I),
	$$
	yielding to
	\begin{equation*}\label{ineq2}
		\begin{split}
			\dfrac{d}{dt} V_{\nu,\mu,\overline\nu,\zeta}&\leq(\nu-\mu)\|w_t\|_0^2+(2\nu-\zeta) \dfrac{\|\theta_t\|_0^2}{2}-\nu\Pi(w+\ell\theta)-\nu\Pi(w-\ell\theta)\\&+\bigg[\bigg(2\eps+8\pi c_c\eps+2P\nu+\dfrac{(\eta-\beta \Upsilon\zeta-\beta \Upsilon\nu)^2}{\zeta}+\frac{\nu^2\eta^2}{\eps}+\dfrac{9\beta^2\Upsilon^2\kappa^2}{\eps\ell^4}+\dfrac{9\beta^2 \Upsilon^2}{\eps\ell^2} \bigg)\gamma-2S\nu\bigg]\dfrac{\|w\|^4_1}{2}\\
			&+\bigg[\bigg(2\eps+8\pi c_c\eps+2P\nu+\dfrac{(\eta-\beta \Upsilon\zeta-\beta \Upsilon\nu)^2}{\zeta}+\frac{\nu^2\eta^2}{\eps}+\dfrac{9\beta^2\Upsilon^2\kappa^2}{\eps\ell^4}+\dfrac{9\beta^2 \Upsilon^2}{\eps\ell^2} \bigg)\gamma \\&-2\bigg(\nu-\dfrac{3\beta^2\Upsilon^2\epsilon}{\nu\ell^4}\bigg)\bigg]\frac{\|w\|^2_{2}}{2}+\big(\eps(3+8\pi c_c\ell^2)-2\nu\kappa\big)\frac{\|\theta\|^2_{1}}{2}-\frac{\nu\epsilon}{2}\frac{\|\theta\|^2_{2}}{2}+c_4.
		\end{split}
	\end{equation*} 
	Then we take the parameters in \eqref{parameters} and
	\begin{equation*}\label{constants}
		\begin{split}
			\gamma&<\dfrac{2\min\bigg\{S\nu\,,\,\nu-\dfrac{3\beta^2\Upsilon^2\epsilon}{\nu\ell^4}\bigg\}}{2\eps+8\pi c_c\eps+2P\nu+\dfrac{(\eta-\beta \Upsilon\zeta-\beta \Upsilon\nu)^2}{\zeta}+\dfrac{\nu^2\eta^2}{\eps}+\dfrac{9\beta^2\Upsilon^2\kappa^2}{\eps\ell^4}+\dfrac{9\beta^2 \Upsilon^2}{\eps\ell^2}},
		\end{split}
	\end{equation*} 
	implying \eqref{ineq3}.
\end{proof}
We are now in position to complete the proof of the Proposition \ref{prop-absorbing}. From Lemma \ref{lemma0} and Lemma \ref{lemma3} we have for some $\Lambda(\nu)>0$ and $C>0$ that
\begin{equation*}\label{bd2}
	\dfrac{d}{dt}V_{\nu,\mu,\zeta}(S_t(y))+\Lambda V_{\nu,\mu,\zeta}(S_t(y))\leq C,\quad t>0;
\end{equation*}
integrating, this implies
\begin{equation*}
	V_{\nu,\mu,\zeta}(S_t(y))\leq V_{\nu,\mu,\zeta}(y)e^{-\Lambda t}+\dfrac{C}{\Lambda}(1-e^{-\Lambda t}).
\end{equation*}
Therefore, the set 
$$
\mathcal{B}:=\Big\{z\in Y:V_{\nu,\mu,\zeta}(z)\leq 1+\dfrac{C}{\Lambda}\Big\}
$$
is a bounded, forward-invariant absorbing set, implying that $(S_t, Y)$ is ultimately dissipative (in the sense of Section \ref{app1}).

\subsection{Quasi-stability and attractors: proof of Theorem \ref{teo-main}}\label{proof-teo-main}
We construct here the global compact attractor for the dynamical system \eqref{eq_sist1} using quasi-stability theory, e.g. see \cite{chueshov}. A quasi-stable dynamical system is one where the difference of two trajectories can be decomposed into uniformly stable and compact parts; in this way it is also possible to obtain, almost immediately, that the attractor is smooth, with finite fractal dimension and that there exists a generalized fractal exponential attractor. We follow the program outlined in \cite{holawe}, based on \cite{chla} and, later \cite{chueshov}.

Let $\mathfrak{g}(w):=\big(P-S\int_Iw_{x}^2\big)w_{xx}$ and $f, \overline f$ the usual cable nonlinearity in \eqref{melan42}.
We consider the difference of two strong solutions $(w^i,\theta^i)$, $i=1,2$ to \eqref{eq_sist1}, satisfying
\begin{equation}
	\left\{\begin{array}{ll}
		W_{tt}+\mu\, W_t+W_{xxxx}+\mathfrak{g}(w^\mathcal{I})-\mathfrak{g}(w^\mathcal{II})+\big(f(w^\mathcal{I},\theta^\mathcal{I})-f(w^\mathcal{II},\theta^\mathcal{II})\big)_x=-\beta \Upsilon\Theta_t-\eta\Theta &\text{in }I_T\\
		\frac{\ell^2}{3}\Theta_{tt}+\zeta\, \Theta_t +\epsilon\Theta_{xxxx}-\kappa\Theta_{xx}+\big(\overline f(w^\mathcal{I},\theta^\mathcal{I})-\overline f(w^\mathcal{II},\theta^\mathcal{II})\big)_x=0\,& \text{in }I_T\\
		W=W_{xx}=\Theta=\Theta_{xx}=0 &\hspace{-20mm}\text{on }\{0,\pi\}\times(0,T)\\
		W(x,0)=w^{\mathcal{I}}_0(x)-w^{\mathcal{II}}_0(x),\quad\,\Theta(x,0)=\theta^\mathcal{I}_0(x)-\theta^\mathcal{II}_0(x) &\text{on }\overline I\\
		W_t(x,0)=w^\mathcal{I}_1(x)-w^\mathcal{II}_1(x),\quad\Theta_t(x,0)=\theta^\mathcal{I}_1(x)-\theta^\mathcal{II}_1(x) &\text{on }\overline I
	\end{array}\right.
	\label{eq_sist_diff}
\end{equation}
where $W=w^\mathcal{I}-w^\mathcal{II}$ and $\Theta=\theta^\mathcal{I}-\theta^\mathcal{II}$; 
we recall that, on any bounded, forward-invariant ball $B_R(Y)$ ($R>0$ is the radius), we have
$$
\|w_t^\mathcal{I}(t)\|_0+\|w^\mathcal{I}(t)\|_2+\|\theta_t^\mathcal{I}(t)\|_0+\|\theta^\mathcal{I}(t)\|_2+\|w_t^\mathcal{II}(t)\|_0+\|w^\mathcal{II}(t)\|_2+\|\theta_t^\mathcal{II}(t)\|_0+\|\theta^\mathcal{II}(t)\|_2\leq C(R),\quad t>0.
$$

We introduce 
\begin{equation*}
	\mathcal{G}(W):=\mathfrak{g}(w^\mathcal{I})-\mathfrak{g}(w^\mathcal{II})\qquad \mathcal{F}(W,\Theta):=f(w^\mathcal{I},\theta^\mathcal{I})-f(w^\mathcal{II},\theta^\mathcal{II})\qquad \mathcal{\overline F}(W,\Theta):=\overline f(w^\mathcal{I},\theta^\mathcal{I})-\overline f(w^\mathcal{II},\theta^\mathcal{II})
\end{equation*}
and the ``difference" energy
$$
E_{W,\Theta}(t):=\dfrac{\|W_t\|_0^2}{2}+\dfrac{\|W\|_{2}^2}{2}+\ell^2\dfrac{\|\Theta_t\|^2_{0}}{6}+\epsilon\dfrac{\|\Theta\|^2_{2}}{2}+\kappa\dfrac{\|\Theta\|^2_{1}}{2}.
$$
We associate to \eqref{eq_sist_diff} the following energy identity
\begin{equation}
	\begin{split}\label{en_id}
		E_{W,\Theta}(t)+\mu\int_s^t\|W_t\|_0^2+\zeta \int_s^t\|\Theta_t\|_0^2=&~	E_{W,\Theta}(0)-\beta \Upsilon\int_s^t\big(\Theta_t,W_t\big)_0-\eta\int_s^t\big(\Theta ,W_t\big)_0\\&\hspace{-20mm}-\int_s^t\big( \mathcal{G}(W),W_{t}\big)_0+\int_s^t \,_{-1}\langle \mathcal{F}(W,\Theta),W_{xt}\rangle_{1}+\int_s^t\, _{-1}\langle \mathcal{\overline F}(W,\Theta),\Theta_{xt}\rangle_{1}
	\end{split}
\end{equation}
The following lemma is a special case of \cite[Lemma 8.3.1]{chla}, using \eqref{en_id} and standard ``wave-type" multipliers. It also uses the fact that $\mathfrak{g}, f,\overline f\in \text{Lip}_{\text{loc}}(H^2\cap H^1_0(I), L^2(I))$.
\begin{lemma}\label{lemma_E}
	Let $w^i,\theta^i\in C^0(0,T; (H^2\cap H_0^1)(I))\cap C^1(0,T;L^2(I))$ solve \eqref{eq_sist1} for $i=\mathcal{I},\mathcal{II}$. Additionally assume $(w^i(t), w_t^i(t))$, $(\theta^i(t),\theta_t^i(t))\in B_R(Y)$ for all $t\in [0,T]$ with $T>0$. Then, for any $\eta\in(0,2]$ it holds
	\begin{equation}\label{en3}
		\begin{split}
			TE_{W,\Theta}(T)+\int_0^T&E_{W,\Theta}(\tau)d\tau\leq a_0E_{W,\Theta}(0)+C(R,T,\eta)\sup\limits_{\tau\in[0,T]}\big(\|W\|_{2-\eta}^2+\|\Theta\|_{2-\eta}^2\big)+\\&-a_1\int_0^T\int_s^T\big( \mathcal{G}(W),W_{t}\big)_0d\tau ds-a_2\int_0^T\big( \mathcal{G}(W),W_{t}\big)_0 ds\\&+a_3\int_0^T\int_s^T\, _{-1}\langle \mathcal{F}(W,\Theta),W_{xt}\rangle_{1} d\tau ds+a_4\int_0^T\, _{-1}\langle \mathcal{F}(W,\Theta),W_{xt}\rangle_{1} ds\\&+a_5\int_0^T\int_s^T\, _{-1}\langle \mathcal{\overline F}(W,\Theta),\Theta_{xt}\rangle_{1} d\tau ds+a_6\int_0^T\, _{-1}\langle \mathcal{\overline F}(W,\Theta),\Theta_{xt}\rangle_{1} ds,
		\end{split}
	\end{equation}
with $a_i>0$ not dependent on $T$ and $R$.
\end{lemma}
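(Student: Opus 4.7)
The plan is to combine the energy identity \eqref{en_id} with two standard equipartition multipliers, in the spirit of \cite[Lemma~8.3.1]{chla}, adapted to the coupled beam structure of \eqref{eq_sist_diff}. Writing \eqref{en_id} with initial time $s\in[0,T]$ in place of $0$ and final time $T$, and discarding the nonnegative dissipation $\mu\int_s^T\|W_t\|_0^2+\zeta\int_s^T\|\Theta_t\|_0^2$ from the left-hand side, I obtain
\[
E_{W,\Theta}(T)\leq E_{W,\Theta}(s) + \mathcal R_{\mathrm{fl}}(s,T) + \mathcal R_{\mathrm{nl}}(s,T),
\]
where $\mathcal R_{\mathrm{fl}}$ collects the flow cross terms and $\mathcal R_{\mathrm{nl}}$ the three nonlinear duality integrals appearing on the RHS of \eqref{en_id}. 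Integrating this inequality in $s$ over $[0,T]$ produces $T E_{W,\Theta}(T)$ on the left, $\int_0^T E_{W,\Theta}(s)\,ds$ on the right, and precisely the iterated integrals $\int_0^T\!\int_s^T\,d\tau\,ds$ with coefficients $a_1,a_3,a_5$ in \eqref{en3}.

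To close the estimate I need an upper bound for $\int_0^T E_{W,\Theta}(s)\,ds$, which is produced by the equipartition multipliers. I test \eqref{eq_sist_diff}$_1$ with $W$ and \eqref{eq_sist_diff}$_2$ with $\Theta$, use $(W_{tt},W)_0=\frac{d}{dt}(W_t,W)_0-\|W_t\|_0^2$ (and the analogous identity for $\Theta$), and integrate over $[0,T]$. Summing the two identities yields
\[
\int_0^T\!\bigl(\|W\|_2^2+\epsilon\|\Theta\|_2^2+\kappa\|\Theta\|_1^2\bigr)\,d\tau \leq \int_0^T\!\bigl(\|W_t\|_0^2+\tfrac{\ell^2}{3}\|\Theta_t\|_0^2\bigr)\,d\tau + \mathcal B(0)+\mathcal B(T) + \mathrm{nl}+\mathrm{fl},
\]
where $|\mathcal B(t)|\lesssim E_{W,\Theta}(t)$ collects the endpoint boundary pieces $(W_t,W)_0$, $(\Theta_t,\Theta)_0$, $\|W\|_0^2$, $\|\Theta\|_0^2$. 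The kinetic integral on the right is controlled through the dissipation in \eqref{en_id} with $s=0$ (at cost of $E_{W,\Theta}(0)$ plus nonlinear/flow terms), while $\mathcal B(T)$ is reabsorbed into $E_{W,\Theta}(0)$ using the first step. Combining the two steps gives \eqref{en3} with the single integrals $\int_0^T\,ds$ (coefficients $a_2,a_4,a_6$) absorbing the contributions generated by the equipartition step and the $\mathcal B(T)$ pieces.

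The main technical obstacle is preserving the \emph{precise} form of the nonlinear duality integrals in the RHS of \eqref{en3}, where the pairings are against $W_t$, $W_{xt}$, $\Theta_{xt}$. The equipartition multipliers naturally generate pairings against $W$, $W_x$, $\Theta_x$, which are the ``wrong'' partners. The remedy is integration by parts in time: writing $W_x(\tau)=W_x(0)+\int_0^\tau W_{xt}(\sigma)\,d\sigma$ (and analogously for $\Theta_x$ and for $W$ versus $W_t$), these unwanted pairings are converted into iterated time integrals in the correct partners $W_{xt}$, $\Theta_{xt}$, $W_t$ --- matching the $a_3,a_5$ coefficients in \eqref{en3} --- plus initial-data contributions bounded by $C(R)\,E_{W,\Theta}(0)$ thanks to the local Lipschitz property $\mathfrak g,f,\overline f:(H^2\cap H^1_0)(I)\to L^2(I)$. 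The remaining non-conservative and cross terms $\eta(\Theta,W_t)_0$, $\beta\Upsilon(\Theta_t,W_t)_0$, $(W_t,W)_0$, $(\Theta,W)_0$ are handled with Young's inequality: the top-order part is absorbed into $E_{W,\Theta}$ on the left at small parameter, while the residual lower-order part is controlled via the compact embedding $(H^2\cap H^1_0)(I)\subset\subset H^{2-\eta}(I)$ for $\eta\in(0,2]$, producing the term $C(R,T,\eta)\sup_{\tau\in[0,T]}\bigl(\|W\|_{2-\eta}^2+\|\Theta\|_{2-\eta}^2\bigr)$ that appears in \eqref{en3}.
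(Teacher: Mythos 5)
Your overall skeleton---the difference energy identity \eqref{en_id} taken from $s$ to $T$ and integrated in $s$, an equipartition multiplier ($W$ and $\Theta$) to recover $\int_0^T E_{W,\Theta}$, and control of the kinetic integral through the damping---is exactly the standard route behind \cite[Lemma 8.3.1]{chla} that the paper invokes (the paper gives no further detail), so in outline you agree with it. However, your treatment of the ``wrong-partner'' nonlinear terms does not produce \eqref{en3}. Writing $W_x(\tau)=W_x(0)+\int_0^\tau W_{xt}(\sigma)\,d\sigma$ inside $\int_0^T\, {}_{-1}\langle \mathcal F(\tau),W_x(\tau)\rangle_1\,d\tau$ yields $\int_0^T\!\int_0^\tau {}_{-1}\langle \mathcal F(\tau),W_{xt}(\sigma)\rangle_1\,d\sigma\,d\tau$, in which the nonlinearity and the velocity are evaluated at \emph{different} times; this is not the term $\int_0^T\!\int_s^T {}_{-1}\langle \mathcal F(\tau),W_{xt}(\tau)\rangle_1\,d\tau\,ds$ of \eqref{en3} (the one later estimated in \eqref{ts2}), and Fubini does not reconcile the two. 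Moreover the remainder $\int_0^T {}_{-1}\langle \mathcal F(\tau),W_x(0)\rangle_1\,d\tau$ is linear, not quadratic, in the initial difference, so after Young it contributes a multiple of $E_{W,\Theta}(0)$ with a constant depending on $T$ and $R$, contradicting the requirement that $a_0$ be independent of $T$ and $R$. No conversion is needed: this is precisely where the property $\mathfrak g,f,\overline f\in \mathrm{Lip}_{\mathrm{loc}}((H^2\cap H^1_0)(I),L^2(I))$ emphasized in the paper enters. On $B_R(Y)$ one has $\|\mathcal G(W)\|_0+\|\mathcal F\|_0+\|\overline{\mathcal F}\|_0\le C(R)(\|W\|_2+\|\Theta\|_2)$, so the multiplier terms $(\mathcal G(W),W)_0$, $(\mathcal F,W_x)_0$, $(\overline{\mathcal F},\Theta_x)_0$ are bounded by $\eps(\|W\|_2^2+\|\Theta\|_2^2)+C(R,\eps)(\|W\|_1^2+\|\Theta\|_1^2)$ and go directly into $\eps\int_0^T E_{W,\Theta}$ plus the lower-order term $C(R,T,\eta)\sup_\tau(\|W\|_{2-\eta}^2+\|\Theta\|_{2-\eta}^2)$.

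The second gap concerns the flow cross term $\beta\Upsilon(\Theta_t,W_t)_0$. This is a product of two \emph{velocities} of the difference, so there is no ``residual lower-order part'' that the compact embedding can absorb, and its Young bound $\tfrac{\beta\ell}{2}(\|\Theta_t\|_0^2+\|W_t\|_0^2)$ is comparable to the kinetic part of $E_{W,\Theta}$ with a constant fixed by $\beta,\ell$, hence not absorbable ``at small parameter''; having discarded the dissipation at the very first step, you are left with nothing to absorb it into. One must either retain $\mu\int\|W_t\|_0^2+\zeta\int\|\Theta_t\|_0^2$ and absorb under a relation between the parameters, or use the device already employed in Section \ref{proof-abs}: integrate by parts in time and substitute the $\Theta$-equation of \eqref{eq_sist_diff}, i.e. $(\Theta_t,W_t)_0=\tfrac{d}{dt}(\Theta_t,W)_0+\tfrac{3}{\ell^2}\big[\zeta(\Theta_t,W)_0+\epsilon(\Theta,W)_2+\kappa(\Theta,W)_1-(\overline{\mathcal F},W_x)_0\big]$, after which the only top-order piece carries the factor $\epsilon\beta\Upsilon/\ell^2$ and is absorbed thanks to the smallness of $\epsilon$ assumed in Proposition \ref{prop-absorbing} (which is in force when the lemma is applied). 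As written, your argument leaves this term uncontrolled.
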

Now we bound the nonlinear differences term in \eqref{en3}. Concerning the deck nonlinearity, $\mathfrak{g}(w)$, we may estimate in the standard way for Krieger-Woinowsky (or Berger-type) nonlinearities. In \cite{holawe} it is established that  there exist $\eps>0$ and $C(\eps,R)>0$ such that
	\begin{equation}\label{ts3}
	\begin{split}
		&\bigg|\int_s^t\big( \mathcal{G}(W),W_{t}\big)_0 d\tau\bigg|\leq \eps\int_s^tE_{W,\Theta}(\tau)d\tau+C(R,\eps)\sup_{\tau\in[s,t]}\|W(\tau)\|_{2-\eta}^2\qquad \forall \eta\in(0,\tfrac{1}{2}),
	\end{split}	
\end{equation}
provided $w^i\in\mathcal{B}_R(H^2\cap H^1_0(I))$ for all $\tau\in[s,t]$. We do not replicate that proof here, however, we need to produce a similar inequality for the cable nonlinearity. We do so in the next lemma; the proof relies on two highly non-trivial computational lemma---based on a novel decomposition of the cable-hanger nonlinearity---given in the Appendix \ref{app2}.
\begin{lemma}
	Let $W=w^\mathcal{I}-w^\mathcal{II}$, $\Theta=\theta^\mathcal{I}-\theta^\mathcal{II}$, let $f(w,\theta)$ and $\overline f(w,\theta)$ the cable nonlinearity be as in \eqref{melan42}. Also assume $w^i,\theta^i\in C^0([s,t], (H^2\cap H_0^1)(I))\cap C^1([s,t],L^2(I))$ for $i=\mathcal{I},\mathcal{II}$. Then there exist $\eps>0$ and $C(\eps,R)>0$ such that
	\begin{equation}\label{ts2}
		\begin{split}
			&\bigg|\int_s^t\,_{-1}\langle\mathcal{F}(W,\Theta),W_{xt}\rangle_{1} d\tau+\int_s^t\,_{-1}\langle\mathcal{\overline F}(W,\Theta),\Theta_{xt}\rangle_{1} d\tau\bigg|\\&\leq \eps\int_s^tE_{W,\Theta}(\tau)d\tau+C(R,\eps)\sup_{\tau\in[s,t]}(\|W(\tau)\|_{2-\eta}^2+\|\Theta(\tau)\|_{2-\eta}^2),\quad \forall\eta\in(0,\tfrac{1}{2}),
		\end{split}	
	\end{equation}
 provided that $w^i,\theta^i\in\mathcal{B}_R(H^2\cap H^1_0(I))$ for all $\tau\in[s,t]$.
\end{lemma}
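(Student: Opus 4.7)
The plan is to reduce the combined expression to separate $\pm$ components via the change of variables $U^\pm := W \pm \ell\Theta$, then split the resulting nonlinear difference into a genuinely compact piece and a quasilinear piece that calls for a more delicate treatment. From the definitions in \eqref{melan42}, one checks the algebraic identity
\[
\mathcal{F}(W,\Theta)\, W_{xt} + \mathcal{\overline F}(W,\Theta)\,\Theta_{xt} \;=\; H^+(U^+)_{xt} + H^-(U^-)_{xt},
\]
where $H^\pm := h(u^{\mathcal{I},\pm})-h(u^{\mathcal{II},\pm})$ with $u^{i,\pm}:=w^i\pm\ell\theta^i$. Since $\|U^\pm\|_{2-\eta}\leq \|W\|_{2-\eta}+\ell\|\Theta\|_{2-\eta}$ and $\|(U^\pm)_t\|_0^2\leq C\,E_{W,\Theta}$, it suffices to estimate $\int_s^t\!\!\int_0^\pi H^\pm (U^\pm)_{xt}\,dx\,d\tau$ for each sign (working on strong solutions, where the pairing reduces to an $L^2$-integral, then extending by density). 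To exploit the product structure of $h$, I introduce the scalar-in-$x$ factor $\Phi(u):=b[\mathcal{L}_0-\mathcal{L}(u)]-c\xi_0(x)$ and the pointwise factor $\phi(u):=g(u_x+s_x)$ with $g(r):=r/\sqrt{1+r^2}$, so that $h(u)=\Phi(u)\phi(u)$, and decompose
\[
H^\pm \;=\; \underbrace{\Phi(u^{\mathcal{I},\pm})\bigl[\phi(u^{\mathcal{I},\pm})-\phi(u^{\mathcal{II},\pm})\bigr]}_{=:H^\pm_1} \;+\; \underbrace{\bigl[\Phi(u^{\mathcal{I},\pm})-\Phi(u^{\mathcal{II},\pm})\bigr]\phi(u^{\mathcal{II},\pm})}_{=:H^\pm_2}.
\]

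The piece $H^\pm_2$ is the compact one. Its bracketed prefactor reduces to the scalar (in $x$) quantity $-b[\mathcal{L}(u^{\mathcal{I},\pm})-\mathcal{L}(u^{\mathcal{II},\pm})]$, controlled by $C\|U^\pm\|_1$ thanks to Lemma \ref{lemma1}. Since $(U^\pm)_t$ vanishes on $\{0,\pi\}$, integration by parts in $x$ gives
\[
\int_0^\pi H^\pm_2\,(U^\pm)_{xt}\,dx \;=\; b\bigl[\mathcal{L}(u^{\mathcal{I},\pm})-\mathcal{L}(u^{\mathcal{II},\pm})\bigr]\int_0^\pi \phi(u^{\mathcal{II},\pm})_x\,(U^\pm)_t\,dx,
\]
where $\|\phi(u^{\mathcal{II},\pm})_x\|_0\leq C(R)$ on the absorbing ball since $u^{\mathcal{II},\pm}\in H^2$. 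H\"older and Young with a small parameter, combined with $\|U^\pm\|_1\leq\|U^\pm\|_{2-\eta}$, produce a pointwise-in-$\tau$ bound of the form $\eps\,\|(U^\pm)_t\|_0^2 + C(R,\eps)\|U^\pm\|_{2-\eta}^2$, which integrates to the desired form.

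The real difficulty lies in $H^\pm_1$, and here is where the novel decomposition alluded to in the introduction plays its role. The mean-value identity for $g$ yields $\phi(u^{\mathcal{I},\pm})-\phi(u^{\mathcal{II},\pm}) = K^\pm (U^\pm)_x$, with $K^\pm:=\int_0^1 g'(u^{\mathcal{II},\pm}_x+\lambda (U^\pm)_x+s_x)\,d\lambda$ bounded by $1$. Thus $H^\pm_1(U^\pm)_{xt} = M^\pm(U^\pm)_x(U^\pm)_{xt}$ with $M^\pm:=\Phi(u^{\mathcal{I},\pm})K^\pm$ uniformly bounded on $B_R$. A naive Cauchy–Schwarz bound $C(R)\|U^\pm\|_2\|(U^\pm)_t\|_0$ contributes at the full energy level with no room to extract $\eps$; this is the principal obstacle. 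The remedy is to recognize the exact-derivative structure $(U^\pm)_x(U^\pm)_{xt} = \tfrac{1}{2}\partial_\tau[(U^\pm)_x^2]$ and integrate by parts in time, rather than space:
\[
\int_s^t\!\!\int_0^\pi M^\pm (U^\pm)_x(U^\pm)_{xt}\,dx\,d\tau \;=\; \tfrac{1}{2}\Big[\!\int_0^\pi M^\pm (U^\pm)_x^2\,dx\Big]_{\tau=s}^{\tau=t} - \tfrac{1}{2}\int_s^t\!\!\int_0^\pi \partial_\tau M^\pm\,(U^\pm)_x^2\,dx\,d\tau.
\]
The boundary contribution is majorized by $C(R)\sup_\tau\|U^\pm\|_1^2\leq C(R)\sup_\tau\|U^\pm\|_{2-\eta}^2$. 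For the interior, $\partial_\tau M^\pm$ is driven by $\partial_\tau\mathcal{L}(u^{\mathcal{I},\pm})$ (a bounded scalar) and by $g''(\cdot)\,u^{i,\pm}_{xt}$, both uniformly controlled in $L^2_x$ on the strong invariant set granted by the construction of $\mathcal{B}_{\bar\nu}$ together with gain-of-regularity on smooth data; combined with the 1D embedding $\|(U^\pm)_x\|_\infty\leq C\|U^\pm\|_{H^{3/2+\delta}}\leq C\|U^\pm\|_{2-\eta}$ valid for $\eta\in(0,\tfrac{1}{2})$, this interior remainder is bounded by $C(R,\eps)\sup_\tau\|U^\pm\|_{2-\eta}^2$. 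Summing the two estimates over $\pm$ delivers \eqref{ts2}. In sum, the quasilinear factor in $h$ would produce an $O(1)$-coefficient energy-level contribution under any purely spatial manipulation; converting it, via the time-integration-by-parts maneuver, into a boundary term plus a derivative-of-coefficients remainder is precisely what renders the stabilizability estimate feasible.
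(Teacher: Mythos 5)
Your overall strategy is the same as the paper's: the reduction to the variables $U^\pm=W\pm\ell\Theta$, the splitting of $h(u^{\mathcal I})-h(u^{\mathcal{II}})$ into the quasilinear piece $\Phi(u^{\mathcal I})[\phi(u^{\mathcal I})-\phi(u^{\mathcal{II}})]$ and the compact piece $[\Phi(u^{\mathcal I})-\Phi(u^{\mathcal{II}})]\phi(u^{\mathcal{II}})$, the spatial integration by parts for the compact piece, and the ``integrate in time" maneuver that turns $M^\pm(U^\pm)_x(U^\pm)_{xt}$ into a boundary-in-time term plus a $\partial_\tau M^\pm$ remainder are exactly the content of Lemma \ref{zcable} (see \eqref{stima00}--\eqref{stima1}) together with the $\pm$ recombination used in the paper's proof of the lemma.

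There is, however, a genuine gap in your treatment of the interior remainder $\int_s^t\!\int_I \partial_\tau M^\pm\,(U^\pm)_x^2$. You bound $\partial_\tau M^\pm$ by invoking uniform $L^2_x$ control of $u^{i,\pm}_{xt}$ (through $g''(\cdot)u^{i,\pm}_{xt}$) ``on the strong invariant set granted by the construction of $\mathcal B_{\bar\nu}$ together with gain-of-regularity on smooth data." No such control is available: the absorbing set is bounded only in $Y$, i.e. it gives uniform bounds on $\|u^i_t\|_0$ and $\|u^i\|_2$ but nothing on the mixed derivative $u^i_{xt}$; and assuming extra regularity of trajectories on the absorbing set is circular, since smoothness of the attractor is a \emph{consequence} of the quasi-stability estimate (Corollary \ref{doy*}), not an input to it. The same objection applies to your claim that $\partial_\tau\mathcal L(u^{\mathcal I,\pm})=\int_I A(u^{\mathcal I,\pm}_x+s_x)u^{\mathcal I,\pm}_{xt}\,dx$ is ``a bounded scalar": as written it involves $u_{xt}$. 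The missing step, which is how the paper closes the argument, is a further integration by parts \emph{in space} inside the remainder: this trades $q_{xt}$ (resp. $u^{\mathcal I}_{xt}$) for $q_t$ (resp. $u^{\mathcal I}_t$), which \emph{is} bounded in $L^2$ on $\mathcal B$, at the price of derivatives falling on $\xi_0$, on $A^{(2)},A^{(3)}$ (producing $q_{xx}$, controlled by the $H^2$ bound) and on $(U^\pm)_x^2$ (producing $(U^\pm)_x(U^\pm)_{xx}$); one then uses $\|(U^\pm)_x\|_{L^\infty}\leq C\|U^\pm\|_{2-\eta}$, $\eta\in(0,\tfrac12)$, and Young's inequality to obtain $\eps\|U^\pm\|_2^2+C(R,\eps)\|U^\pm\|_{2-\eta}^2$, as in \eqref{stima2}--\eqref{stima3} and \eqref{stima5}. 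With that replacement your argument coincides with the paper's; without it, the remainder estimate does not close on the energy space $Y$.
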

\begin{proof}
	Recalling the definition of the nonlinearity in \eqref{melan42} and applying Lemma \ref{zcable} in Appendix \ref{app2} we have
	\begin{equation*}
		\begin{split}
			&\bigg|\int_s^t\,_{-1}\langle f(w^\mathcal{I},\theta^\mathcal{I})-f(w^\mathcal{II},\theta^\mathcal{II}),W_{xt}\rangle_{1} d\tau+\int_s^t\,_{-1}\langle \overline f(w^\mathcal{I},\theta^\mathcal{I})-\overline f(w^\mathcal{II},\theta^\mathcal{II}),\Theta_{xt}\rangle_{1} d\tau\bigg|\\
			=&\bigg|\!\int_s^t\,_{-1}\langle h(w^\mathcal{I}+\ell \theta^\mathcal{I})-h(w^\mathcal{II}+\ell\theta^\mathcal{II}),W_{xt}+\ell\Theta_{xt}\rangle_{1}+\,_{-1}\langle h(w^\mathcal{I}-\ell \theta^\mathcal{I})-h(w^\mathcal{II}-\ell\theta^\mathcal{II}),W_{xt}-\ell\Theta_{xt}\rangle_{1} d\tau\bigg|\\
			\leq&~\eps\int_s^t\bigg[\dfrac{\|W_{t}+\ell\Theta_{t}\|_0^2}{2}+\dfrac{\|W_{t}-\ell\Theta_{t}\|_0^2}{2}+\dfrac{\|W+\ell\Theta\|_1^2}{2}+\dfrac{\|W-\ell\Theta\|_1^2}{2}+\dfrac{\|W+\ell\Theta\|_2^2}{2}+\dfrac{\|W-\ell\Theta\|_2^2}{2}\bigg]d\tau\\&+C(R,\eps)\sup_{\tau\in[s,t]}\big(\|W+\ell\Theta\|^2_{2-\eta}+\|W-\ell\Theta\|^2_{2-\eta}\big).
		\end{split}
	\end{equation*}
	Up to modify the constants $\eps$, $C(R,\eps)>0$ and  using Young inequality, we infer \eqref{ts2}. In particular, this bound holds on the invariant, absorbing ball $\mathcal{B}$ from Proposition \ref{prop-absorbing}.
\end{proof}
We prove the quasi-stability estimate on any bounded, forward-invariant set.
\begin{lemma}\label{quasi-stab}
	Under the assumptions of Proposition \ref{prop-absorbing}, the dynamical system $(S_t,Y)$ corresponding to generalized solutions to \eqref{eq_sist1} is quasi-stable on any bounded, forward-invariant set. In particular, $(S_t,Y)$ is quasi-stable on the absorbing ball $\mathcal{B}$ given in Section \ref{proof-abs}.
\end{lemma}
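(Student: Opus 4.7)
My plan is to combine Lemma \ref{lemma_E}, which provides a multiplier-based estimate of the form $TE_{W,\Theta}(T) + \int_0^T E_{W,\Theta}$, with the nonlinear difference bounds \eqref{ts3} and \eqref{ts2}. The target is an inequality of the form
$$E_{W,\Theta}(T) \leq \gamma\, E_{W,\Theta}(0) + C_R \sup_{\tau \in [0,T]}\bigl(\|W(\tau)\|_{2-\eta}^2 + \|\Theta(\tau)\|_{2-\eta}^2\bigr),\qquad \gamma \in (0,1),$$
which, since the embedding $(H^2\cap H^1_0)(I)\hookrightarrow H^{2-\eta}(I)$ is compact for $\eta\in(0,2]$, is precisely the quasi-stability estimate on any bounded, forward-invariant ball (and in particular on the absorbing ball $\mathcal{B}$ constructed in Section \ref{proof-abs}).

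Concretely, I would fix two strong solutions with initial data in a forward-invariant ball $B_R(Y)$, apply Lemma \ref{lemma_E} to their difference $(W,\Theta)$, and systematically bound the nonlinear integrals on the right-hand side of \eqref{en3}. The three ``single'' integrals with coefficients $a_2, a_4, a_6$ are handled directly by \eqref{ts3}--\eqref{ts2} with $s=0$, $t=T$, generating contributions of the form $\varepsilon \int_0^T E_{W,\Theta}\,d\tau + C(R,\varepsilon)\sup_\tau(\|W\|_{2-\eta}^2 + \|\Theta\|_{2-\eta}^2)$. The three ``double'' integrals with coefficients $a_1, a_3, a_5$ are handled by applying the same bounds on $[s,T]$ and integrating in $s\in[0,T]$, which produces $\varepsilon T\int_0^T E_{W,\Theta}\,d\tau$ plus $T$ times a compact sup. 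Choosing $\varepsilon>0$ sufficiently small (in terms of $T$ and the $a_i$) so that the aggregate coefficient of $\int_0^T E_{W,\Theta}\,d\tau$ on the right is at most $1/2$, I absorb this term into the corresponding term on the left, drop the remaining nonnegative integral, divide by $T$, and finally choose $T$ large enough that $a_0/T<1$. This delivers the desired estimate with $\gamma := a_0/T$.

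The principal obstacle I anticipate is the uniform absorption of the non-conservative, flow-driven cross-terms $\beta\Upsilon(\Theta_t,W_t)_0$ and $\eta(\Theta,W_t)_0$ appearing in the energy identity \eqref{en_id}---and propagated by the wave-type multipliers producing Lemma \ref{lemma_E}. Cauchy--Schwarz and Young split the first term into a kinetic contribution bounded by $\tfrac{\mu}{2}\|W_t\|_0^2 + \tfrac{\zeta}{2}\|\Theta_t\|_0^2$, which is absorbed by the damping on the left of \eqref{en_id}; the second term splits into a further kinetic piece plus $\|\Theta\|_0^2\leq\|\Theta\|_{2-\eta}^2$, which is compact. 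This step is what makes the strict positivity $\mu,\zeta>0$ (and hence the standing assumption in Proposition \ref{prop-absorbing}) essential for this approach. A secondary technical issue is the $W$--$\Theta$ cross-coupling introduced by the cable nonlinearity in the difference dynamics, but this is already folded into the joint bound \eqref{ts2} via the novel decomposition in Appendix \ref{app2}, so no further disentangling of $W$ from $\Theta$ at the stabilizability level is required.
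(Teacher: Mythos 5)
Your core route coincides with the paper's: combine the multiplier estimate \eqref{en3} of Lemma \ref{lemma_E} with the nonlinear difference bounds \eqref{ts3} and \eqref{ts2}, absorb the $\eps\int_0^T E_{W,\Theta}$ contributions into the left-hand side, and take $T$ large so that the coefficient of $E_{W,\Theta}(0)$ falls below one. One step is missing, however: the inequality $E_{W,\Theta}(T)\le \gamma E_{W,\Theta}(0)+C_R\sup_{\tau\in[0,T]}\bigl(\|W(\tau)\|^2_{2-\eta}+\|\Theta(\tau)\|^2_{2-\eta}\bigr)$ with $\gamma<1$ at a single (large) time $T$ is \emph{not} yet the quasi-stability estimate of Definition \ref{quasidef}, which requires $\|S_ty_1-S_ty_2\|^2_Y\le e^{-\sigma t}\|y_1-y_2\|^2_Y+C_q\sup_{\tau\in[0,t]}\|x_1-x_2\|^2_{Z_*}$ for \emph{every} $t\ge0$. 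You must iterate the one-step contraction over the intervals $[kT,(k+1)T]$, using the semigroup property and the forward invariance of the set (so the same bound applies on each interval), and collect the compact sup-terms; this is precisely the paper's final step, converting $\gamma^k$ into $e^{-\sigma t}$. It is routine, but without it your conclusion does not match the definition you are asked to verify. Also, the order of quantifiers should be reversed: fix $T$ first (so that $a_0/T<1$), then choose $\eps$ small depending on $T$ and the $a_i$; as the compact constant $C(R,T,\eta)$ is allowed to depend on $T$, this is harmless once stated correctly.

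A second caveat concerns your treatment of the flow cross-term: Young's inequality gives $\beta|\Upsilon|\,|(\Theta_t,W_t)_0|\le \tfrac{\mu}{2}\|W_t\|_0^2+\tfrac{\beta^2\Upsilon^2}{2\mu}\|\Theta_t\|_0^2$, and the second piece is dominated by $\tfrac{\zeta}{2}\|\Theta_t\|_0^2$ only if $\beta^2\Upsilon^2\le\mu\zeta$, a restriction not contained in the hypotheses of Proposition \ref{prop-absorbing}. Fortunately this step is not needed at the level of your argument: the terms $\beta\Upsilon(\Theta_t,W_t)_0$ and $\eta(\Theta,W_t)_0$ do not appear on the right-hand side of \eqref{en3}; they are already accounted for inside Lemma \ref{lemma_E} (cited from \cite{chla}), so only the nonlinear differences $\mathcal G$, $\mathcal F$, $\overline{\mathcal F}$ remain to be estimated, which is exactly what \eqref{ts3} and \eqref{ts2} accomplish. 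If you intend to rederive \eqref{en3} rather than quote it, the naive splitting above must be replaced by an argument that does not impose an unstated smallness relation among $\beta$, $\Upsilon$, $\mu$, $\zeta$.
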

\begin{proof}
	From \eqref{en3}, \eqref{ts3} and \eqref{ts2}, taking $T$ sufficiently large, we infer that
	\begin{equation*}
		E_{W,\Theta}(T)\leq cE_{W,\Theta}(0)+C(R,T,\eta)\sup_{\tau\in[0,T]}(\|W(\tau)\|_{2-\eta}^2+\|\Theta(\tau)\|_{2-\eta}^2)
	\end{equation*}
with $c<1$ for all $\eta\in(0,1/2)$. By iteration, via the semigroup property, we obtain that there exists $\sigma>0$ such that
\begin{equation*}
	\begin{split}
	\|(W(t),W_t(t);\Theta(t),\Theta_t(t))\|_Y^2&\leq C(\sigma,R)e^{-\sigma t}\|(W(0),W_t(0);\Theta(0),\Theta_t(0))\|_Y^2\\&+C(R,\eta)\sup_{\tau\in[0,t]}(\|W(\tau)\|_{2-\eta}^2+\|\Theta(\tau)\|_{2-\eta}^2).
	\end{split}
\end{equation*}
This implies that $(S_t,Y)$ is quasi-stable on $B_R(Y)$, as defined in Section \ref{app1}.
\end{proof}

To conclude the proof of Theorem \ref{teo-main}, we first apply Corollary \ref{doy*} from Appendix \ref{app1}. This provides the first two bullet points; to obtain the existence of the exponential attractor, we estimate $W(t,s)=w(t)-w(s)$, $\Theta(t,s)=\theta(t)-\theta(s)$ in $\widetilde Y$, using the existence of the absorbing ball $\mathcal B$ and the reasoning exactly as in \cite[Section 5.4]{holawe} and in \cite[Section 5]{bongazlasweb}.

\subsection{Proof of Proposition \ref{lin}}\label{proof-lin}
	From \eqref{odes}$_2$ we obtain \eqref{theta}$_2$, hence, given $\theta_j(t)$ it is possible to compute $w_j(t)$. Under the assumptions on $\mu$ we find
	\begin{equation*}\label{wj}
		w_j(t)=e^{-\frac{\mu}{2}t}\bigg[c^j_1\sin\bigg(\frac{\omega_j}{2}t\bigg)+c^j_2\cos\bigg(\frac{\omega_j}{2} t\bigg)\bigg]+g\dfrac{L^4\sqrt{2L}(1-(-1)^j)}{j^5\pi^5}+w_j^p(t)\quad c_1^j,c_2^j\in\mathbb{R},\,\,\, j\in\mathbb{N}_+,
	\end{equation*}
	where $w^p_j(t)$ is a particular solution similar to $\theta_j(t)$. The condition causing resonance is
	\begin{equation*}\label{res}
		\begin{cases}
			\zeta=\frac{\ell^2}{3}\mu\\
			\omega_j=\frac{3}{\ell^2}\gamma_j,
		\end{cases}
	\end{equation*}
	not occurring from the assumptions.
	Therefore we get $w^p_j(t)=e^{-\frac{3\zeta}{2\ell^2}t}\big[A_j\sin\big(\frac{3}{2\ell^2}\gamma_j t\big)+B_j\cos\big(\frac{3}{2\ell^2}\gamma_j t\big)\big]$ with 
	\begin{equation}\label{A-B}
		\begin{split}
			A_j=&\frac{2 \ell^2 }{3\gamma_j[(4 \pi ^4 j^4 \ell^4/L^4-9 \gamma_j^2-  6 \ell^2\zeta \mu +9 \zeta^2 )^2+36 \gamma_j^2  \left(\ell^2 \mu -3 \zeta \right)^2]}\\& \big\{\left[4 \pi ^4 j^4 \ell^4/L^4-9 \gamma _j^2-6 \ell^2 \zeta\mu +9 \zeta^2\right] \left[9\theta_j^0(\gamma_j^2+\zeta^2)\beta\Upsilon+6\ell^2\theta^1_j\zeta\beta\Upsilon-2\ell^2(3\zeta\theta_j^0+2\ell^2\theta^1_j) \eta \right]+\\& 18\gamma_j^2 \left(3 \zeta -\ell^2 \mu \right) \left[(3\zeta\theta_j^0+2\ell^2\theta^1_j)\beta\Upsilon-3\theta^0_j \zeta\beta\Upsilon+2\ell^2\theta^0_j\eta \right]\big\}\\
			B_j=&\frac{2 \ell^2 }{(4 \pi ^4 j^4 \ell^4/L^4-9 \gamma_j^2-  6 \ell^2\zeta \mu +9 \zeta^2 )^2+36 \gamma_j^2  \left(\ell^2 \mu -3 \zeta \right)^2}\\& \big\{\left[4 \pi ^4 j^4 \ell^4/L^4-9 \gamma _j^2-6 \ell^2 \zeta\mu +9 \zeta^2\right] \left[-(2\ell^2\theta^1_j+3\zeta\theta_j^0) \beta  \Upsilon+3   \theta_j^0 \zeta \beta \Upsilon-2\ell^2 \theta_j^0 \eta  \right]+\\& \left(3 \zeta -\ell^2 \mu \right) \left[18\theta^0_j(\gamma_j^2+\zeta^2)\beta\Upsilon +12\ell^2\theta^1_j\zeta\beta\Upsilon-4\ell^2(3\zeta\theta^0_j+2\ell^2\theta^1_j)\eta \right]\big\}
		\end{split}
	\end{equation}
	Through the initial conditions on $w$ we find
	\begin{equation}\label{c1-c2}
		\begin{cases}
			c_1^j=\frac{1}{\omega_j\ell^2}\bigg(2w^1_j\ell^2+w^0_j\mu\ell^2-3A_j\gamma_j+B_j(3\zeta-\mu\ell^2)-g\tfrac{L^4\sqrt{2L}(1-(-1)^j)}{j^5\pi^5}\mu\ell^2\bigg)\\
			c_2^j=w^0_j-Bj-g\tfrac{L^4\sqrt{2L}(1-(-1)^j)}{j^5\pi^5}.
		\end{cases}
	\end{equation}
	We show that the series \eqref{solution}$_2$ converges uniformly in $I$ for all $t\geq0$. We have
	\begin{equation*}
		\begin{split}
			\bigg|\sum_{j=1}^{\infty} \theta_j(t) \hspace{1mm}\sin\bigg(\dfrac{j\pi x}{L}\bigg)\bigg|\leq &\sum_{j=1}^{\infty} \bigg[\dfrac{2\ell^2}{3}\bigg|\dfrac{\theta^1_j}{\gamma_j}\bigg|+\zeta\bigg|\frac{\theta_j^0}{\gamma_j}\bigg|+|\theta_j^0|\bigg]\\\leq& \dfrac{2\ell^2}{3}\bigg[\sum_{j=1}^{\infty} (\theta^1_j)^2\bigg]^\frac{1}{2}\bigg[\sum_{j=1}^{\infty} \dfrac{1}{\gamma^2_j}\bigg]^\frac{1}{2}+\sum_{j=1}^{\infty} \bigg[\zeta\bigg|\frac{\theta_j^0}{\gamma_j}\bigg|+|\theta_j^0|\bigg]<\infty,
		\end{split}
	\end{equation*}
	due to the regularity of the initial data. We argue similarly for the series related to $w$.

\par\bigskip
{\bf Acknowledgements.}
A. Falocchi is partially supported by the INdAM - GNAMPA project 2023 ``Modelli matematici di EDP per fluidi e strutture e proprietà geometriche delle soluzioni di EDP'',  INdAM - GNAMPA  project 2024 “Problemi frazionari:  proprieta' quantitative ottimali, simmetria, regolarit\`a, and he is supported by the MUR (Italy) grant Dipartimento di Eccellenza 2023-2027, Dipartimento di Matematica, Politecnico di Milano.

J.T. Webster's research was partially funded by NSF-DMS 2307538. He wishes to thank UMBC for granting him a productive sabbatical, resulting in this research, in the Spring of 2024. Additionally, he wishes to thank Politecnico di Milano for hosting him during that time.

\section{Appendices}\label{appendix}

\subsection{Dissipative dynamical systems}\label{app1}
In this section we closely follow the notation and conventions from \cite{chueshov,chla}. 

Let $(S_t,H)$ be a dynamical system on a complete metric space $H$. We say that   $(S_t,H)$ is {\it ultimately dissipative} iff it
possesses a bounded absorbing set $\mathcal B$, which is to say, for any bounded set $B$, there is a time $t_B$ so that
$S_{t_B}(B)\subset \mathcal B$. A dynamical system is
\textit{asymptotically compact} if there exists a compact set $K$
which is uniformly attracting, i.e., for any bounded set $D\subset
H$ we have that $$\displaystyle~
\lim_{t\to+\infty}d_{{H}}\{S_t D|K\}=0$$ 
in the sense of
the Hausdorff semidistance. We say that
$(S_t,H)$ is \textit{asymptotically smooth} if for any bounded, forward
invariant $(t>0) $ set $D$ there exists a compact set $K \subset
\overline{D}$ which is uniformly attracting (as above). A \textit{global attractor} $A\subset H$ is a closed, bounded set in $%
H$ which is invariant (i.e. $S_tA={A}$ for all $%
t \in \mathbb R$) and uniformly attracting (as previously defined).

The following   {\it if and only if}  characterization of global
attractors  is well-known \cite{Babin-Vishik,chla}
\begin{theorem}\label{0}
Let $(S_t,H)$ be an ultimately dissipative dynamical system in a complete
metric space  $A$. Then $(S_t,H)$ possesses a compact
global attractor ${A} $ if and only if $(S_t,H)$ is
asymptotically smooth.
\end{theorem}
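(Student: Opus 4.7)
The plan is to prove both implications using the omega-limit set construction, which is the standard tool for such equivalences in infinite-dimensional dynamical systems.

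\medskip\noindent\textbf{Necessity ($\Rightarrow$).} Suppose $(S_t,H)$ admits a compact global attractor $\mathcal A$. To show asymptotic smoothness, fix any bounded forward-invariant set $D$. Since $\mathcal A$ is uniformly attracting, $\mathcal A$ attracts $D$ in the sense of the Hausdorff semidistance. I will define the candidate compact attracting set $K:=\omega(D)\subset\overline D$, where $\omega(D):=\bigcap_{s\ge 0}\overline{\bigcup_{t\ge s}S_tD}$ is the omega-limit set of $D$. Forward invariance of $D$ gives $\omega(D)\subset\overline D$, and uniform attraction of $D$ by $\mathcal A$ forces $\omega(D)\subset\mathcal A$, hence $\omega(D)$ is compact (being a closed subset of a compact set). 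A short standard argument using sequential compactness shows that $\omega(D)$ itself attracts $D$ uniformly, yielding asymptotic smoothness.

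\medskip\noindent\textbf{Sufficiency ($\Leftarrow$).} This is the substantive direction. Assume $(S_t,H)$ is ultimately dissipative and asymptotically smooth. Let $\mathcal B$ be a bounded absorbing set. First, I will replace $\mathcal B$ by a bounded forward-invariant absorbing set $\widetilde{\mathcal B}:=\overline{\bigcup_{t\ge t_*}S_t\mathcal B}$, where $t_*$ is chosen so that $S_t\mathcal B\subset \mathcal B$ for $t\ge t_*$; absorption plus continuity of $S_t$ guarantee $\widetilde{\mathcal B}$ is bounded, forward invariant, and still absorbing. Then I take as the candidate attractor
\[
\mathcal A:=\omega(\widetilde{\mathcal B})=\bigcap_{s\ge 0}\overline{\bigcup_{t\ge s}S_t\widetilde{\mathcal B}}.
\]

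\medskip\noindent\textbf{Properties of $\mathcal A$.} The crux is to establish (a) nonemptiness and compactness, (b) strict invariance $S_t\mathcal A=\mathcal A$, and (c) uniform attraction of every bounded set. For (a), by asymptotic smoothness applied to $\widetilde{\mathcal B}$ there is a compact $K\subset\overline{\widetilde{\mathcal B}}$ that uniformly attracts $\widetilde{\mathcal B}$. A sequential argument then shows that any sequence $\{S_{t_n}x_n\}$ with $x_n\in\widetilde{\mathcal B}$ and $t_n\to\infty$ has a convergent subsequence whose limit lies in $\mathcal A$; this yields $\mathcal A\neq\emptyset$ and $\mathcal A\subset K$, hence compactness. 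Invariance (b) is obtained from the continuity of $S_t$ together with the algebraic identity $S_t\omega(\widetilde{\mathcal B})=\omega(\widetilde{\mathcal B})$, using absorption to reverse the inclusion. For (c), given any bounded $D$, absorption places $S_tD\subset\widetilde{\mathcal B}$ for all large $t$, and then the uniform attraction of $\widetilde{\mathcal B}$ by $\mathcal A$ transfers to $D$.

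\medskip\noindent\textbf{Main obstacle.} The delicate point is step (a): extracting convergent subsequences of orbits. Unlike the finite-dimensional case, pointwise boundedness of $\{S_{t_n}x_n\}$ does not ensure precompactness, and one must invoke asymptotic smoothness precisely to supply the compact attracting $K$ that furnishes the needed precompactness. The same ingredient is also what makes the uniform attraction in (c) nontrivial: one must rule out escape of mass to infinity along orbits, which again reduces to the existence of $K$. Once these compactness facts are in hand, invariance and minimality follow from routine manipulations of omega-limits and semigroup properties.
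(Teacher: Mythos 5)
The paper itself gives no proof of this theorem—it is quoted as a classical result from \cite{Babin-Vishik,chla}—and your argument is exactly the standard one found in those references: build a bounded forward-invariant absorbing set, take its omega-limit set as the candidate attractor, use asymptotic smoothness to supply the compact attracting set that furnishes precompactness of orbits (hence nonemptiness, compactness, invariance and uniform attraction), and for the converse take $K=\omega(D)\subset\mathcal{A}\cap\overline{D}$. Your sketch is correct as written; the only cosmetic slip is crediting the reverse inclusion $\omega(\widetilde{\mathcal B})\subset S_t\,\omega(\widetilde{\mathcal B})$ to ``absorption,'' whereas it again rests on the sequential compactness coming from asymptotic smoothness, which you already identify as the key ingredient.
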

For non-gradient systems---such as the one in this paper---the above theorem is often used to obtain the existence of a compact global attractor.

A generalized fractal {\em exponential attractor} for the dynamics $(S_t,H)$ is a forward invariant, compact set $ A_{\text{exp}} \subset H$ in the phase space, with finite fractal dimension (possibly in a weaker topology), attracting bounded sets with uniform exponential rate.  When we refer to $A_{\text{exp}}$ as a {\em fractal exponential attractor}, we are  meaning that $A_{\text{exp}}\subset H$ has fractal dimension in $H$, rather than in some weaker space. 

Here we define {\em quasi-stability} as our primary tool in the long-time behavior analysis.  In a quasi-stable dynamical system the difference of two trajectories can be decomposed into a uniformly stable part and a compact part, with controlled scaling of powers. The theory of quasi-stable dynamical systems has been developed rather thoroughly in recent years \cite{chueshov,chla}. More general definitions of quasi-stable dynamical systems are included in \cite{chueshov}. For ease of exposition and application in our analysis we focus on a  narrower definition. 

Informally, we note that: 
\begin{itemize}
\item Obtaining the quasi-stability estimate on the global attractor $A$ implies additional smoothness and finite dimensionality $A$. This follows from the so called squeezing property and one of Ladyzhenskaya's theorems, see \cite[Theorems 7.3.2 and 7.3.3]{chla}.
\item Obtaining the quasi-stability estimate on an absorbing ball implies the existence of a global attactor, as well as an exponentially attracting set; uniform in time H\"{o}lder continuity (in some topology) yields finite dimensionality of this exponentially attracting set (in said topology). 
\end{itemize}

Let us proceed with a formal discussion of {\it  quasi-stability}.
\begin{assumption}\label{secondorder} Consider second order (in time) dynamics yielding the dynamical system $(S_t,H)$, where $H=X \times Z$ with $X,Z$ Banach, and $X$ compactly embeds into $Z$.  Further, suppose $y= (x,z) \in H$ with $S_ty =(x(t),x_t(t))$ where the function $x \in C(\mathbb R_+,X)\cap C^1(\mathbb R_+,Z)$. 
\end{assumption}
\noindent Assuming \ref{secondorder} we focus on the second order, hyperbolic-like evolution problems.
\begin{definition}\label{quasidef}
With Assumption \ref{secondorder} in force, suppose that the dynamics $(S_t,H)$ admit the following estimate for $y_1,y_2 \in B \subset H$:
\begin{equation}\label{specquasi}
||S_ty_1-S_ty_2||_H^2 \le e^{-\gamma t}||y_1-y_2||_H^2+C_q\sup_{\tau \in [0,t]} ||x_1-x_2||^2_{Z_*}, ~~\text{ for some }~~\gamma, C_q>0,
\end{equation} where $Z \subseteq Z_* \subset X$ and the last embedding is compact. Then we say that $(S_t,H)$ is {\em quasi-stable} on $B$.
\end{definition}

We now run through a handful of consequences of the type of quasi-stability described by Definition \ref{quasidef} for dynamical systems $(S_t,H)$ satisfying Assumption \ref{secondorder}.
\cite[Proposition 7.9.4]{chla}
\begin{theorem}[Asymptotic smoothness]\label{doy}
If a dynamical system $(S_t,H)$, satisfying Assumption \ref{secondorder}, is quasi-stable on every bounded, forward invariant set $ B \subset H$, then $(S_t,H)$ is asymptotically smooth. Thus, if in addition, $(S_t,H)$ is ultimately dissipative, then there exists a compact global attractor $ A \subset H$. 
\end{theorem}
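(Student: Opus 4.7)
My approach invokes the standard characterization of asymptotic smoothness due to Ceron--Lopes (see \cite{chla}): the system $(S_t,H)$ is asymptotically smooth on a bounded, forward invariant set $B$ provided that for every $\epsilon > 0$ there exist $T = T(\epsilon)$ and a seminorm $\varrho$ on $B$ which is \emph{precompact} (every sequence in $B$ admits a subsequence Cauchy with respect to $\varrho$), such that
$$\|S_Ty_1 - S_Ty_2\|_H \le \epsilon + \varrho(y_1, y_2), \quad \forall\, y_1, y_2 \in B.$$
Equivalently, one shows $\alpha(S_t B) \to 0$ as $t \to \infty$, where $\alpha$ is the Kuratowski measure of non-compactness. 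Combined with ultimate dissipativity, Theorem \ref{0} then delivers the compact global attractor.

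\textbf{Step 1 --- Applying the quasi-stability estimate.} Fix a bounded, forward invariant $B \subset H$ with $H$-diameter at most $R$. By the hypothesis \eqref{specquasi} of Definition \ref{quasidef}, for any $y_1,y_2 \in B$ and any $t>0$,
$$\|S_ty_1 - S_ty_2\|_H \le e^{-\gamma t/2}\, R + \sqrt{C_q}\,\sup_{\tau \in [0,t]}\|x_1(\tau) - x_2(\tau)\|_{Z_*}.$$
Given $\epsilon > 0$, choose $T = T(\epsilon,R)$ so that $e^{-\gamma T/2}\, R < \epsilon$. Setting $\varrho_T(y_1,y_2) := \sqrt{C_q}\,\sup_{\tau \in [0,T]}\|x_1(\tau) - x_2(\tau)\|_{Z_*}$ we obtain the required form $\|S_Ty_1-S_Ty_2\|_H \le \epsilon + \varrho_T(y_1,y_2)$.

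\textbf{Step 2 --- Precompactness of $\varrho_T$ (main technical step).} This is the crux of the proof. Given any sequence $\{y_n\} \subset B$, I must extract a subsequence $\{y_{n_k}\}$ along which $\varrho_T(y_{n_k}, y_{n_l}) \to 0$. Writing $S_\tau y_n = (x_n(\tau), x_n'(\tau))$ with $x_n \in C([0,T];X) \cap C^1([0,T]; Z)$, the forward invariance of $B$ in $H = X \times Z$ yields uniform bounds
$$\sup_n\, \sup_{\tau \in [0,T]} \|x_n(\tau)\|_X + \sup_n\, \sup_{\tau \in [0,T]} \|x_n'(\tau)\|_Z \le M(B,T).$$
The second bound provides uniform Lipschitz continuity of $\tau \mapsto x_n(\tau)$ into $Z$, hence equicontinuity in $Z_*$ under the embedding relating these spaces. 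The first bound gives pointwise relative compactness in $Z_*$ via the compact embedding furnished by Assumption \ref{secondorder} (and the definition of $Z_*$). An application of the Arzel\`a--Ascoli theorem in $C([0,T]; Z_*)$ then produces a subsequence Cauchy in that space, which is precisely the precompactness of $\varrho_T$ on $B$.

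\textbf{Step 3 --- Conclusion.} The Ceron--Lopes criterion recalled above, combined with Steps 1--2, yields asymptotic smoothness on every bounded forward invariant $B$, and hence on the whole space (quasi-stability is assumed on every such $B$). If moreover $(S_t,H)$ is ultimately dissipative, then Theorem \ref{0} supplies a compact global attractor $A \subset H$. The principal obstacle is Step 2: one must carefully exploit the interplay between the space $X$ (where trajectories live with uniform bounds), the space $Z$ (which provides equicontinuity in time via the bounded derivative), and the intermediate space $Z_*$ (into which $X$ embeds compactly), so that Arzel\`a--Ascoli can be applied in the supremum-in-time norm. The remaining steps are essentially bookkeeping once the precompact seminorm has been identified.
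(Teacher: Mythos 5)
Your argument is correct, but note that the paper itself does not prove Theorem \ref{doy}: it is imported verbatim from \cite[Proposition 7.9.4]{chla}, so there is no in-paper proof to compare against. What you wrote is essentially a reconstruction of the argument in that reference: the quasi-stability inequality \eqref{specquasi} reduces asymptotic smoothness to the precompactness of the lower-order seminorm $\sup_{\tau\in[0,T]}\|x_1(\tau)-x_2(\tau)\|_{Z_*}$ (a Ceron--Lopes/Khanmamedov-type compactness criterion), and that precompactness is extracted from the uniform bound of trajectories in $C([0,T];X)\cap C^1([0,T];Z)$ together with the compactness of $X\hookrightarrow Z_*$, via Arzel\`a--Ascoli; the conclusion then follows from Theorem \ref{0} exactly as you say. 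Two clarifications are worth making explicit. First, in your Step 2 the equicontinuity of $\{x_n\}$ with values in $Z_*$ does \emph{not} follow directly from Lipschitz continuity into $Z$, since the $Z_*$ norm is stronger than the $Z$ norm; one needs Ehrling's interpolation inequality $\|u\|_{Z_*}\le \eps\|u\|_X+C_\eps\|u\|_Z$ (valid because $X\hookrightarrow Z_*$ is compact and $Z_*\hookrightarrow Z$ is continuous), combined with the uniform $X$-bound on trajectories --- equivalently, one may invoke a Simon/Aubin--Lions-type compactness lemma giving relative compactness in $C([0,T];Z_*)$ from boundedness in $C([0,T];X)$ and equicontinuity in $Z$. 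Second, you read the scale of spaces as $X\subset\subset Z_*\subseteq Z$, which is indeed the intended meaning (and the one used in the body of the paper, where $Z_*$ is of the type $H^{2-\eta}$), even though Definition \ref{quasidef} writes the chain of inclusions in the reverse order; it is good that you resolved this in the way consistent with Assumption \ref{secondorder}. With these points made precise, your proof is complete and faithful to the cited source.
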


In \cite[Theorem 7.9.6 and 7.9.8]{chla} the authors provide the following result concerning improved properties of the attractor $A$, when the quasi-stability estimate is shown {\em on} the attractor $A$.
\begin{theorem}[Dimensionality and smoothness]\label{dimsmooth}
If a dynamical system $(S_t,H)$, satisfying Assumption \ref{secondorder}, has a compact global attractor $ A \subset H$, and is quasi-stable on $A$, then $ A$ has finite fractal dimension in $H$, i.e., $\text{dim}_f^HA <+\infty$. Moreover, any full trajectory $\{(x(t),x_t(t))~:~t \in \mathbb R\} \subset A$ has the property that
$$x_t \in L_{\infty}(\mathbb R;X)\cap C(\mathbb R;Z);~~x_{tt} \in L_{\infty}(\mathbb R;Z),$$ with 
$$||x_t(t)||^2_X+||x_{tt}(t)||_Z^2 \le C,$$
where the constant $C$ above depends on the ``compactness constant" $C_q$ in \eqref{specquasi}.
\end{theorem}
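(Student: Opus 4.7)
The two conclusions split naturally: finite fractal dimension of $A$ in $H$ follows from a Ladyzhenskaya-type squeezing/covering argument applied to the map $V := S_{t^*}\big|_A$ for a suitably large time $t^*$, while the additional time-regularity of trajectories on $A$ is obtained by exploiting the \emph{full} invariance $S_tA=A$ (which on the compact attractor yields bi-infinite trajectories through every point) combined with a difference-quotient argument driven by \eqref{specquasi}.

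For the dimension bound, fix $t^*>0$ large enough that $e^{-\gamma t^*}\le 1/16$; then \eqref{specquasi} yields, for all $y_1,y_2\in A$,
\[
\|S_{t^*}y_1-S_{t^*}y_2\|_H\;\le\;\tfrac14\|y_1-y_2\|_H+n(y_1,y_2),
\qquad n(y_1,y_2):=C_q^{1/2}\sup_{\tau\in[0,t^*]}\|x_1(\tau)-x_2(\tau)\|_{Z_*}.
\]
The seminorm $n$ is compact on $A\times A$ because trajectories starting in the bounded set $A$ are uniformly bounded in $C([0,t^*];X)$, and the compact embedding $X\subset\subset Z_*$ then yields precompactness in $C([0,t^*];Z_*)$ via Ascoli-type arguments. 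This is the standard form of Ladyzhenskaya's squeezing condition for $V$, and a covering argument --- each $r$-cover of $A$ in $H$ is refined, using the $1/4$-contraction plus a finite cover induced by $n$, into an $(r/2)$-cover of $VA = A$ --- yields $N_{r/2}(A)\le M\, N_r(A)$ for some finite $M$, iterating to the fractal-dimension bound $\dim_f^H A \le \log_2 M < \infty$.

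For the time-regularity, fix $y\in A$ and use invariance to extract a bi-infinite trajectory $\{y(\tau)\}_{\tau\in\mathbb R}\subset A$ through $y$. The decisive step is to upgrade \eqref{specquasi} (which depends on initial data) to a two-sided, initial-data-free bound: apply \eqref{specquasi} on the interval $[t-T,t]$ to the trajectories $y(\cdot)$ and $y(\cdot+h)$, both of which live in $A$ for all time, and let $T\to+\infty$. The term $e^{-\gamma T}\|y(t-T)-y(t-T+h)\|_H^2$ vanishes in the limit because $A$ is $H$-bounded, leaving
\[
\|y(t)-y(t+h)\|_H^2\;\le\; C_q\,\sup_{\tau\in\mathbb R}\|x(\tau)-x(\tau+h)\|_{Z_*}^2,
\qquad t,h\in\mathbb R.
\]
Dividing by $h^2$ and using $\sup_\tau\|(x(\tau+h)-x(\tau))/h\|_{Z_*}\le \sup_\tau\|x_t(\tau)\|_Z$ (bounded because $(x,x_t)\in A\subset X\times Z$), the difference quotients $(y(t+h)-y(t))/h = (\Delta_h x(t),\Delta_h x_t(t))$ are bounded in $H = X\times Z$ uniformly in $h$ and $t$. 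Weak-$*$ compactness then yields $x_t\in L^\infty(\mathbb R;X)$ and $x_{tt}\in L^\infty(\mathbb R;Z)$ with the claimed pointwise bound depending on $C_q$; the continuity $x_t\in C(\mathbb R;Z)$ follows because $x_{tt}\in L^\infty(\mathbb R;Z)$ makes $x_t$ Lipschitz with values in $Z$.

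\textbf{Main obstacle.} The crux is the upgrade from the one-sided, initial-data-dependent quasi-stability estimate to the two-sided, initial-data-free bound. This step fundamentally requires the simultaneous use of (a) the invariance of $A$, to supply backward-in-time trajectories on which to run \eqref{specquasi} from ever-earlier starting points, and (b) the $H$-boundedness of $A$, to annihilate the exponentially decaying norm contribution as $T\to+\infty$. A secondary (routine) subtlety is identifying the weak-$*$ limits of the difference quotients with $x_t$ and $x_{tt}$ in the distributional sense; this follows once the uniform bounds above are in hand.
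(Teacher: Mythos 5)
This theorem is not proved in the paper at all: it is imported from \cite[Theorems 7.9.6 and 7.9.8]{chla}, so your attempt should be measured against that standard argument. Your architecture does match it: squeezing for the iterate $V=S_{t^*}$ plus Ladyzhenskaya's theorem for the dimension bound, and backward trajectories on the invariant set, with the exponential term killed by boundedness of $A$, followed by difference quotients for the extra regularity. The dimension half is an acceptable sketch (the independence of the covering multiplicity $M$ from $r$ is precisely the content of the Ladyzhenskaya-type theorem you invoke, so glossing it is tolerable).

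There is, however, a genuine gap in the regularity half. Your decisive inequality $\sup_\tau\|(x(\tau+h)-x(\tau))/h\|_{Z_*}\le\sup_\tau\|x_t(\tau)\|_Z$ presupposes a continuous embedding $Z\hookrightarrow Z_*$. In the setting actually intended (and in the only place the paper applies the theorem, Lemma \ref{quasi-stab}, where $Z_*$ is an $H^{2-\eta}$-type space and $Z=L^2(I)$) the embeddings run the other way: $X\subset\subset Z_*\subseteq Z$. The chain printed in Definition \ref{quasidef} is written backwards — taken literally, $Z\subseteq Z_*\subset X$ together with Assumption \ref{secondorder} ($X\subset\subset Z$) forces $X$ to be finite dimensional — and note that your own dimension argument already uses $X\subset\subset Z_*$, so the two halves of your proof rely on incompatible embeddings. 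With $Z_*$ strictly stronger than $Z$, the $Z_*$-norm of the difference quotient is not controlled by $\sup\|x_t\|_Z$, and the argument collapses at exactly this step. The standard repair, which is how \cite[Theorem 7.9.8]{chla} proceeds, is an Ehrling interpolation plus absorption: use $\|v\|_{Z_*}^2\le\mu\|v\|_X^2+C_\mu\|v\|_Z^2$ (valid since $X\subset\subset Z_*\hookrightarrow Z$) in your limiting estimate, bound $\|x(\tau+h)-x(\tau)\|_X^2\le\|y(\tau+h)-y(\tau)\|_H^2$, take the supremum over $t\in\mathbb R$ on the left (finite because $A$ is bounded), and absorb the $C_q\mu$-term by choosing $\mu<1/(2C_q)$; only the remaining $C_\mu\sup_\tau\|x(\tau+h)-x(\tau)\|_Z^2$ is then estimated by $h^2\sup\|x_t\|_Z^2$ via the fundamental theorem of calculus. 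After that correction your weak-$*$ compactness step, the identification of the limits with $x_t$ and $x_{tt}$, the Lipschitz continuity of $x_t$ into $Z$, and the dependence of the final constant on $C_q$ all go through as you wrote them.
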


We may thus combine Theorems \ref{doy} and \ref{dimsmooth}, to obtain the following corollary: 
\begin{corollary}[Quasi-stability on absorbing ball]\label{doy*}
If a dynamical system $(S_t,H)$, satisfying Assumption \ref{secondorder}, is quasi-stable on a bounded absorbing set $ B \subset H$, then $(S_t,H)$  has a compact global attractor $ A \subset H$, and $ A$ has finite fractal dimension in $H$, i.e., $\text{dim}_f^HA <+\infty$. Moreover, any full trajectory $\{(x(t),x_t(t))~:~t \in \mathbb R\} \subset A$ has the property that
$$x_t \in L_{\infty}(\mathbb R;X)\cap C(\mathbb R;Z);~~x_{tt} \in L_{\infty}(\mathbb R;Z),$$ with bound
$$||x_t(t)||^2_X+||x_{tt}(t)||_Z^2 \le C,$$
where the constant $C$ above depends on the ``compactness constant" $C_q$ in \eqref{specquasi}.
\end{corollary}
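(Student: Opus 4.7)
The plan is to deduce the corollary as a direct combination of Theorems \ref{doy} and \ref{dimsmooth}, using the bounded absorbing set $B$ as the bridge between the hypothesis of quasi-stability on $B$ and the hypothesis of quasi-stability on $A$ required by the finite-dimensionality/regularity theorem.

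First I would record that the mere existence of the bounded absorbing set $\mathcal B$ in the hypothesis immediately furnishes ultimate dissipativity of $(S_t,H)$ in the sense of Section \ref{app1}. Next, I would establish asymptotic smoothness of $(S_t,H)$, which is the nontrivial ingredient. For any bounded forward-invariant set $D\subset H$, absorption yields $t_D>0$ with $S_t(D)\subset B$ for all $t\geq t_D$. Thus it suffices to verify that trajectories starting in $B$ (or in $S_{t_D}(D)\subset B$) are asymptotically compact. For this I would invoke the quasi-stability estimate \eqref{specquasi} on $B$: writing $S_t y_1-S_t y_2$ as the sum of an exponentially decaying contribution $e^{-\gamma t/2}\|y_1-y_2\|_H$ and a compact contribution $C_q\sup_{\tau\in[0,t]}\|x_1-x_2\|_{Z_*}$ (compactness coming from the compact embedding $Z\hookrightarrow Z_*\hookrightarrow X$ combined with Assumption \ref{secondorder} and an Arzel\`a--Ascoli argument on the trajectories $x(\cdot)\in C(\mathbb R_+;X)\cap C^1(\mathbb R_+;Z)$), one obtains the standard Cero\'n--Lopes / $\alpha$-contraction criterion and hence asymptotic smoothness.

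Combining ultimate dissipativity with asymptotic smoothness, Theorem \ref{0} (equivalently, Theorem \ref{doy}) yields the compact global attractor $A\subset H$. By invariance and absorption, $A\subset B$ (perhaps after replacing $B$ with its closure, which is itself absorbing up to a time shift). Since the quasi-stability inequality \eqref{specquasi} is inherited by any subset of $B$, it holds on $A$. I would then invoke Theorem \ref{dimsmooth}: quasi-stability on the compact global attractor delivers $\dim_f^H A<\infty$ together with the stated regularity of full trajectories, including $x_t\in L_\infty(\mathbb R;X)\cap C(\mathbb R;Z)$ and $x_{tt}\in L_\infty(\mathbb R;Z)$, with the bound $\|x_t(t)\|_X^2+\|x_{tt}(t)\|_Z^2\leq C$ depending only on $C_q$ and the size of $B$.

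The only subtle point—really the single ``main obstacle''—is the passage from quasi-stability on the single absorbing set $B$ to genuine asymptotic smoothness of the whole system (as opposed to the stronger hypothesis ``quasi-stability on every bounded forward-invariant set'' in Theorem \ref{doy}). This is handled by exactly the absorption-plus-semigroup argument sketched above: every bounded forward-invariant $D$ enters $B$ in finite time, so compactness of the $\omega$-limit need only be verified inside $B$, where \eqref{specquasi} applies directly. Apart from this, the proof is a clean composition of the two abstract theorems already at our disposal, and no new estimates for the fish-bone dynamics enter its statement.
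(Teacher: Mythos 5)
Your argument is essentially the paper's own proof: the corollary is stated there as a direct combination of Theorems \ref{doy} and \ref{dimsmooth} (both imported from \cite{chla}), which is exactly the composition you carry out, with the absorption-plus-invariance step you spell out (bounded forward-invariant sets enter $B$ in finite time, and $A\subset\overline{B}$ inherits the estimate \eqref{specquasi}) being the implicit glue the paper leaves to the reader. No discrepancy in substance; your version is just a more explicit rendering of the same route.
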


The following theorem relates generalized fractal exponential attractors to the quasi-stability estimate \cite[p. 388, Theorem 7.9.9]{chla}
\begin{theorem}\label{expattract*}
Let Assumption \ref{secondorder} be in force. Assume that the dynamical system generated by solutions $(S_t,H)$ is ultimately dissipative and quasi-stable on a bounded absorbing set $ B$. We also assume there exists a space $\widetilde H \supset H$ so that $t \mapsto S_ty$ is H\"{o}lder continuous in $\widetilde H$ for every $y \in  B$; this is to say there exists $0<\alpha \le 1$ and $C_{ B,T>0}$ so that 
\begin{equation*}\label{holder}||S_ty-S_sy||_{\widetilde H} \le C_{ B,T}|t-s|^{\alpha}, ~~t,s\in[0,T],~~y \in  B.\end{equation*} Then the dynamical system $(S_t,H)$ has a generalized fractal exponential attractor $A_{\text{exp}}$ whose dimension is finite in the space $\widetilde H$, i.e., $\text{dim}_f^{\widetilde H} A_\text{exp}<+\infty$. 
\end{theorem}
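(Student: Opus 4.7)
The plan is to follow the Efendiev--Miranville--Zelik style construction of a discrete exponential attractor, adapted to quasi-stable second order dynamics as in \cite{chueshov,chla}, and then to extend it to continuous time using the H\"older assumption on the flow. First I would choose a discretization time $t^{*}>0$ so large that $e^{-\gamma t^{*}}<1/4$ in the quasi-stability estimate \eqref{specquasi}. On the bounded absorbing set $\mathcal{B}$, which I may assume to be forward invariant (replacing it if needed by $\overline{\bigcup_{t\geq t_{\mathcal{B}}} S_{t}\mathcal{B}}$), this yields a genuine squeezing inequality for the discrete map $V:=S_{t^{*}}|_{\mathcal{B}}$:
\begin{equation*}
\|Vy_{1}-Vy_{2}\|_{H} \leq \tfrac{1}{2}\|y_{1}-y_{2}\|_{H}+\sqrt{C_{q}}\,\sup_{\tau\in[0,t^{*}]}\|x_{1}(\tau)-x_{2}(\tau)\|_{Z_{*}},
\end{equation*}
where the seminorm on the right is precompact relative to the $H$-norm thanks to the compact embedding $Z_{*}\subset X$.

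Next I would build a discrete exponential attractor $\mathcal{A}_{d}\subset \mathcal{B}$ for $V$ by the standard iterative covering argument. At each step $n$, one covers $V^{n}(\mathcal{B})$ by finitely many $H$-balls of geometrically decreasing radius; the squeezing inequality guarantees that the number of balls needed at step $n+1$ grows only sub-exponentially in $n$, because the compact seminorm term on a precovered set can be handled by a finite $\varepsilon$-net in $Z_{*}$. Taking $\mathcal{A}_{d}:=\overline{\bigcup_{n\geq 0}V^{n}(E)}$ for a suitable finite initial net $E$, one obtains a compact, forward $V$-invariant set with $\dim_{f}^{H}\mathcal{A}_{d}<\infty$ that attracts $\mathcal{B}$ at an exponential rate under iterates of $V$.

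I would then define the continuous-time candidate
\begin{equation*}
\mathcal{A}_{\exp}:=\bigcup_{t\in[0,t^{*}]}S_{t}(\mathcal{A}_{d}).
\end{equation*}
Compactness in $H$ and forward invariance follow from continuity of $t\mapsto S_{t}y$, the $V$-invariance of $\mathcal{A}_{d}$, and the semigroup property. Exponential attraction of bounded sets to $\mathcal{A}_{\exp}$ is inherited from the discrete case: any bounded $D\subset H$ enters $\mathcal{B}$ in finite time by ultimate dissipativity, and on $\mathcal{B}$ the discrete attraction rate for $V$ translates into uniform exponential attraction for $\{S_{t}\}$ by interpolating between consecutive iterates.

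Finally, I would bound the fractal dimension of $\mathcal{A}_{\exp}$, and this is where the H\"older hypothesis is essential. The map $\Phi:(y,t)\mapsto S_{t}y$ sends the compact set $\mathcal{A}_{d}\times[0,t^{*}]$ onto $\mathcal{A}_{\exp}$ and is $\alpha$-H\"older continuous into $\widetilde{H}$ (in $y$ because $\mathcal{A}_{d}\subset H$ with finite $H$-dimension and $H\hookrightarrow \widetilde{H}$ continuously, in $t$ by hypothesis). Applying the standard bound $\dim_{f}^{\widetilde{H}}\Phi(K)\leq \alpha^{-1}\dim_{f}^{\widetilde{H}}(K)$ under H\"older maps then gives
\begin{equation*}
\dim_{f}^{\widetilde{H}}\mathcal{A}_{\exp} \leq \tfrac{1}{\alpha}\bigl(\dim_{f}^{\widetilde{H}}\mathcal{A}_{d}+1\bigr)<+\infty.
\end{equation*}
The main obstacle I anticipate is the quantitative covering step: closing the induction requires a precise modulus-of-compactness estimate for the embedding $Z_{*}\hookrightarrow X$ to control how the number of $H$-balls needed after one application of $V$ depends on the covering number of the preimage. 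Without a uniform such bound the geometric contraction by $1/2$ would not dominate the combinatorial growth of the covers, and the finite-dimensionality of $\mathcal{A}_{d}$ would fail. Note also that one cannot expect $\dim_{f}^{H}\mathcal{A}_{\exp}<\infty$ in general, which is exactly why the theorem states finite dimensionality only in the weaker space $\widetilde{H}$.
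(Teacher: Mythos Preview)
The paper does not give its own proof of this theorem; it is stated in Appendix \ref{app1} purely as a quotation of \cite[p.~388, Theorem 7.9.9]{chla}, with no argument supplied. So there is nothing in the paper to compare your proposal against directly.

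That said, your outline is the standard Efendiev--Miranville--Zelik construction as adapted in \cite{chueshov,chla}, and it is essentially the argument behind the cited theorem. The steps---choosing $t^{*}$ large to make the stable part contractive, building a discrete exponential attractor $\mathcal{A}_{d}$ for $V=S_{t^{*}}$ by iterated coverings, thickening to $\bigcup_{t\in[0,t^{*}]}S_{t}\mathcal{A}_{d}$, and then using the H\"older continuity of $t\mapsto S_{t}y$ in $\widetilde{H}$ to propagate finite fractal dimension---are all correct in spirit. Your final remark, that one only obtains $\dim_{f}^{\widetilde{H}}\mathcal{A}_{\exp}<\infty$ and not $\dim_{f}^{H}\mathcal{A}_{\exp}<\infty$, is exactly the point of the theorem.

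One technical caveat worth flagging: the compact seminorm in \eqref{specquasi} is $\sup_{\tau\in[0,t^{*}]}\|x_{1}(\tau)-x_{2}(\tau)\|_{Z_{*}}$, which depends on the \emph{trajectory} and not just on the initial datum $y_{1}-y_{2}$. To run the covering induction you need this seminorm to be dominated by a compact operator on $H$ (applied to $y_{1}-y_{2}$), or else to lift the dynamics to an extended phase space where the trajectory tail is part of the state. This is handled in \cite{chla} (see the proof of Theorem 7.9.9 and the transitivity-of-exponential-attraction lemma there), but your sketch glosses over it when you say the seminorm is ``precompact relative to the $H$-norm.'' Making that step precise is exactly the obstacle you correctly anticipate in your final paragraph.
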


\subsection{Nonlinear difference calculations}\label{app2}
We provide a central lemma which is instrumental in decomposing the nonlinear trajectories, as required in the proof of quasi-stability in Section \ref{proof-teo-main}. The computations are quite lengthy, but the lemma is necessary to the main result.

\begin{lemma}\label{zcable}
	Let $z=u^\mathcal{I}-u^{\mathcal{II}}$, let $h(u)$ the cable nonlinearity be as in \eqref{xii}$_3$. Also assume $u^i\in C^0([s,t], (H^2\cap H_0^1)(I))\cap C^1([s,t],L^2(\Omega))$ for $i=\mathcal{I},\mathcal{II}$. Then there exist $\eps>0$ and $C(\eps,T)>0$ such that
	\begin{equation}\label{ts}
		\bigg|\int_s^t\,_{-1}\langle h(u^\mathcal{I})-h(u^{\mathcal{II}}),z_{xt}\rangle_{1} d\tau\bigg|\leq \eps\int_s^tE_z(\tau)d\tau+C\sup_{\tau\in[s,t]}\|z(\tau)\|_{2-\eta}^2,\quad \forall\eta\in(0,\tfrac{1}{2}),
	\end{equation}
where
\begin{equation*}
	E_z(t):=\dfrac{\|z_t\|_0^2}{2}+\dfrac{\|z\|^2_{1}}{2}+\dfrac{\|z\|^2_{2}}{2}.
\end{equation*}
\end{lemma}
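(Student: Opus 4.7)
My plan is to algebraically decompose the nonlinear difference so as to isolate a scalar (in $x$) factor and a factor proportional to $z_x$. Writing $G(u) = b(\mathcal{L}_0 - \mathcal{L}(u)) - c\xi_0$ and $\Phi(u) = (u+s)_x/\Xi(u)$ so that $h(u) = G(u)\Phi(u)$, and noting that $\Phi'(u)[v] = v_x/\Xi(u)^3$, one obtains
$$h(u^\mathcal{I}) - h(u^\mathcal{II}) = \alpha(\tau)\,\Phi(u^\mathcal{I}) + G(u^\mathcal{II})\,R(x,\tau)\,z_x,$$
where $\alpha(\tau) := b\,[\mathcal{L}(u^\mathcal{II}) - \mathcal{L}(u^\mathcal{I})]$ is scalar in $x$ with $|\alpha(\tau)|\leq C\|z(\tau)\|_1$ by Lemma~\ref{lemma1}, and $R(x,\tau) := \int_0^1 \Xi(u^\mathcal{II}+\sigma z)^{-3}\,d\sigma$ is bounded uniformly on the absorbing ball (since $\Xi\geq 1$). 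Correspondingly I split $I := \int_s^t \,_{-1}\langle h(u^\mathcal{I})-h(u^\mathcal{II}),z_{xt}\rangle_1\,d\tau = I_1 + I_2$. Throughout, calculations are carried out on strong solutions and extended by density (as in Theorem~\ref{teo1}), so that the formal integrations by parts in $x$ are justified by $z_t,u^i_t \in H^1_0$ at the Galerkin level.

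For $I_1$, the scalar $\alpha(\tau)$ factors out of the spatial integral, and integration by parts in $x$ converts $z_{xt}$ into $z_t$ with vanishing boundary terms, producing $I_1 = -\int_s^t \alpha(\tau)\int_0^\pi \Phi(u^\mathcal{I})_x\,z_t\,dx\,d\tau$ with $\Phi(u^\mathcal{I})_x = (u^\mathcal{I}+s)_{xx}/\Xi(u^\mathcal{I})^3 \in L^2$ of norm $\leq C(R)$. Hence $|I_1|\leq C(R)\int\|z\|_1\|z_t\|_0\,d\tau$, and Young's inequality combined with the continuous inclusion $\|z\|_1\leq C\|z\|_{2-\eta}$ (trivially true for any $\eta<1$) delivers the target bound $\eps\int_s^t E_z\,d\tau + C(R,\eps,T)\sup_{[s,t]}\|z\|_{2-\eta}^2$.

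For $I_2 = \int_s^t\int G(u^\mathcal{II})R\,z_x z_{xt}\,dx\,d\tau$, the crucial algebraic observation is $z_x z_{xt} = \tfrac{1}{2}(z_x^2)_\tau$, which permits integration by parts in $\tau$. The $\tau$-boundary terms $\tfrac{1}{2}\int G(u^\mathcal{II})R\,z_x^2\,dx\big|_{\tau=s,t}$ are already compact, being controlled by $C(R)\|z\|_1^2 \leq C(R)\sup\|z\|_{2-\eta}^2$. The remaining piece $-\tfrac{1}{2}\int_s^t\int \partial_\tau[G(u^\mathcal{II})R]\,z_x^2\,dx\,d\tau$ is the crux of the argument. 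The factor $\partial_\tau G(u^\mathcal{II})$ involves $\mathcal{L}_\tau(u^\mathcal{II}) = \int \Xi(u^\mathcal{II})^{-1}(u^\mathcal{II}+s)_y u^\mathcal{II}_{y\tau}\,dy$, which I rewrite (using $u^\mathcal{II}_{y\tau} = \partial_y u^\mathcal{II}_\tau$ and $u^\mathcal{II}_\tau|_{\partial I}=0$) as a bounded scalar of size $C(R)$ via one additional $y$-integration by parts. The factor $\partial_\tau R$ contains $(u^\sigma)_{x\tau}$, formally distributional; the same $x$-integration by parts is now applied globally to the spatial integral, transferring $\partial_x$ onto the smoother object $\Xi(u^\sigma)^{-5}(u^\sigma+s)_x z_x^2$. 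This expansion ultimately produces integrable trilinear terms of types $|z_x|^2|u^\sigma_\tau|$, $|u^\sigma_{xx}||z_x|^2|u^\sigma_\tau|$, and $|z_x||z_{xx}||u^\sigma_\tau|$.

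The main obstacle is to bound these trilinear expressions without producing unabsorbable products like $C(R)\|z\|_2\|z_t\|_0$. The key tool is the sharp 1D Sobolev embedding $\|z_x\|_{L^\infty(I)}\leq C\|z\|_{H^{3/2+\delta}}$ together with the continuous inclusion $\|z\|_{3/2+\delta}\leq C\|z\|_{2-\eta}$, valid whenever $\delta+\eta\leq 1/2$ and in particular for every $\eta\in(0,1/2)$. Substituting $\|z_x\|_{L^\infty}$ by $C\|z\|_{2-\eta}$ wherever it arises, and using the a priori bounds $\|z\|_2,\|u^i\|_2\leq C(R)$ on the absorbing ball as absolute constants, each trilinear contribution reduces to a product of compactly-scaled factors ($\|z\|_{2-\eta}$ or $\|z\|_{2-\eta}^2$) against $\|z_\tau\|_0$ or a bounded quantity; Young's inequality $ab\leq \eps a^2+(4\eps)^{-1}b^2$ then splits these into an $\eps$-scaled $\|z_\tau\|_0^2$ part (absorbable into $\eps\int E_z\,d\tau$) and a purely compact part controlled by $C(R,\eps,T)\sup_{[s,t]}\|z\|_{2-\eta}^2$. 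This completes the bound, and density extends it to weak solutions.
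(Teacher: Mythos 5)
Your proof is correct and follows essentially the same route as the paper: the same splitting of $h(u^\mathcal{I})-h(u^{\mathcal{II}})$ into a spatially constant factor times a smooth profile plus a bounded coefficient times $z_x$, the identity $z_xz_{xt}=\tfrac12 (z_x^2)_t$ with integration by parts in time, spatial integration by parts to avoid the mixed derivatives $u^i_{xt}$ (justified at the Galerkin/strong level), and compactness harvested from $\|z_x\|_{L^\infty(I)}\leq C\|z\|_{2-\eta}$ for $\eta\in(0,\tfrac12)$. The only, harmless, deviations are your use of the integral-remainder form of the difference in place of the paper's Lagrange mean-value point $q_x$ (which if anything avoids a measurability subtlety) and the swapped roles of $u^\mathcal{I}$ and $u^{\mathcal{II}}$ in the decomposition.
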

\begin{proof}
		We introduce preliminary the following function and its derivatives
	$$A(q):=\frac{q}{\sqrt{1+q^2}}\qquad A^{(1)}(q)=\frac{1}{(1+q^2)^{3/2}}\qquad A^{(2)}(q)=-\frac{3q}{(1+q^2)^{5/2}}\qquad A^{(3)}(q)=3\frac{4q^2-1}{(1+q^2)^{7/2}},$$
observing that $|A(q)|,|A^{(1)}(q)|,|A^{(2)}(q)|\leq 1$ and $|A^{(3)}(q)|\leq 3$ for all $q\in\mathbb{R}$.

	 Hence, by Lagrange mean value theorem we have
	\begin{equation*}
	\mathcal{L}(u^{\mathcal{I}})-\mathcal{L}(u^{\mathcal{II}})=\int_I A(q_x)z_x\, dx \qquad \forall q_x\in (u_{x}^\mathcal{I}+s_x\,,\,u_{x}^{\mathcal{II}}+s_x),
	\end{equation*}
	see the definition of $\mathcal{L}(\cdot)$ in \eqref{xii}, and
	\begin{equation*}
		\frac{u_{x}^\mathcal{I}+s_x}{\sqrt{1+[u_{x}^\mathcal{I}+s_x]^2}}-	\frac{u_{x}^{\mathcal{II}}+s_x}{\sqrt{1+[u_{x}^{\mathcal{II}}+s_x]^2}}=A^{(1)}(q_{x})z_x \qquad \forall q_x\in (u_{x}^\mathcal{I}+s_x\,,\,u_{x}^{\mathcal{II}}+s_x).
	\end{equation*}

	In this way we write
	\begin{equation*}
		\begin{split}
			h(u^\mathcal{I})-h(u^{\mathcal{II}})=&	\big[b\big(\mathcal{L}_0-\mathcal{L}(u^\mathcal{I})\big)-c\xi_0\big] \frac{u_{x}^\mathcal{I}+s_x}{\sqrt{1+[u_{x}^\mathcal{I}+s_x]^2}}-	\big[b\big(\mathcal{L}_0-\mathcal{L}(u^{\mathcal{II}})\big)-c\xi_0\big] \frac{u_{x}^{\mathcal{II}}+s_x}{\sqrt{1+[u_{x}^{\mathcal{II}}+s_x]^2}}\\
			=&\big[b\big(\mathcal{L}_0-\mathcal{L}(u^\mathcal{I})\big)-c\,\xi_0\,\big] A^{(1)}(q_x)z_x-b \bigg(\int_I A(q_x)z_x\, dx\bigg)\frac{u_{x}^{\mathcal{II}}+s_x}{\sqrt{1+[u_{x}^{\mathcal{II}}+s_x]^2}},
		\end{split}
	\end{equation*}
obtaining
	\begin{equation}\label{stima00}
		\begin{split}
			\,_{-1}\langle		h(u^\mathcal{I})-h(u^{\mathcal{II}}),z_{xt}\rangle_{1}=&\,_{-1}\langle	\big[b\big(\mathcal{L}_0-\mathcal{L}(u^{\mathcal{I}})\big)-c\,\xi_0\,\big] A^{(1)}(q_x)z_x,z_{xt}\big\rangle_{1}+\\&-b\bigg(\int_I A(q_{x})z_x\, dx\bigg)\,_{-1}\langle A(u_{x}^{\mathcal{II}}+s_x),z_{xt}\rangle_{1},
		\end{split}
	\end{equation}
	for all $q_x\in (u_{x}^\mathcal{I}+s_x\,,\,u_{x}^{\mathcal{II}}+s_x)$.
	
	Let us write the term in first line of \eqref{stima00} as
	\begin{equation}\label{stima0}
		\begin{split}
			\,_{-1}\langle\big[&b\big(\mathcal{L}_0-\mathcal{L}(u^\mathcal{I})\big)-c\,\xi_0\,\big] A^{(1)}(q_x)z_x\,,\,z_{xt}\rangle_{1}=\\&\dfrac{1}{2}\dfrac{d}{dt}\big(\big[b\big(\mathcal{L}_0-\mathcal{L}(u^\mathcal{I})\big)-c\,\xi_0\,\big] A^{(1)}(q_x)z_x\,,\,z_{x}\big)_0+\\
			-&\dfrac{1}{2}\,_{-1}\langle\big[b\big(\mathcal{L}_0-\mathcal{L}(u^{\mathcal{I}})\big)-c\,\xi_0\,\big] A^{(2)}(q_x)z^2_x\,,\,q_{xt}\big\rangle_{1}\\
			+&\dfrac{1}{2}\bigg(\int_I A(u_{x}^{\mathcal{I}}+s_x)u_{xt}^{\mathcal{I}}dx\bigg)\big(A^{(1)}(q_x)\,,\,z_x^2\big)_0\qquad\forall q_x\in(u_{x}^\mathcal{I}+s_x\,,\,u_{x}^{\mathcal{II}}+s_x),
		\end{split}
	\end{equation}
	where we used the time derivative rule.
	We integrate by parts in space the third line term
	in \eqref{stima0}
	\begin{equation*}\label{stima2}
		\begin{split}
		-\,_{-1}\langle\big[b\big(\mathcal{L}_0-\mathcal{L}(u^{\mathcal{I}})\big)-c\,\xi_0\,\big] A^{(2)}(q_x)z^2_x\,,\,q_{xt}\big\rangle_{1}=
			-&c\big(\,\xi_{0x} A^{(2)}(q_x)z^2_x\,,\,q_{t}\big)_0+\\&\big(\big[b\big(\mathcal{L}_0-\mathcal{L}(u^{\mathcal{I}})\big)-c\,\xi_0\,\big] A^{(3)}(q_x)q_{xx}z^2_x\,,\,q_{t}\big)_0+\\&
			\big(\big[b\big(\mathcal{L}_0-\mathcal{L}(u^\mathcal{I})\big)-c\,\xi_0\,\big]A^{(2)}(q_x)2z_xz_{xx}\,,\,q_{t}\big)_0,
		\end{split}
	\end{equation*}
	for all $q\in(u^\mathcal{I}+s\,,\,u^{\mathcal{II}}+s)$.
	Therefore, we obtain the bound on the third line term
	in \eqref{stima0}
	\begin{equation}\label{stima3}
		\begin{split}
			\big|\,_{-1}\langle\big[b\big(\mathcal{L}_0-\mathcal{L}(u^\mathcal{I})\big)-c\,\xi_0\,\big] A^{(2)}(q_x)&z^2_x\,,\,q_{xt}\rangle_{1}\big|
			\leq c\|\xi_{0x}\|_{L^\infty(I)}\|A^{(2)}\|_{L^\infty(I)}\|z_x\|_{L^\infty(I)}\|z_x\|_0\|q_{t}\|_0+\\&\big(b\|u^\mathcal{I}_x\|_{L^1(I)}+c\|\xi_0\|_{L^\infty(I)}\big)\|A^{(3)}\|_{L^\infty(I)}\|z_x\|^2_{L^\infty(I)}\|q_{xx}\|_0\|q_{t}\|_0+\\&2\big(b\|u^\mathcal{I}_x\|_{L^1(I)}+c\|\xi_0\|_{L^\infty(I)}\big)\|A^{(2)}\|_{L^\infty(I)}\|z_x\|_{L^\infty(I)}\|z_{xx}\|_0\|q_{t}\|_0\\&
			\leq \eps \dfrac{\|z\|_2^2}{2}+\dfrac{C}{2}\bigg(\dfrac{1}{\eps}+1\bigg)\|z\|^2_{2-\eta}\qquad \eta\in(0,1/2),\,\,\eps>0,
		\end{split}
	\end{equation}
		for all $q\in(u^\mathcal{I}+s\,,\,u^{\mathcal{II}}+s)$, having the same regularity of $u^\mathcal{I}$, $u^\mathcal{II}$.
	
	We integrate by parts in space the first integral in the fourth line term in \eqref{stima0} and bounding we get
	\begin{equation}\label{stima5}
		\begin{split}
			&\bigg|\bigg(\int_I A(u_{x}^\mathcal{I}+s_x)u_{xt}^\mathcal{I}dx\bigg)\big(A^{(1)}(q_x)\,,\,z_x^2\big)_0\bigg|\\\leq&\bigg(\int_I |u_{xx}^\mathcal{I}+s_{xx}||u_{t}^\mathcal{I}|dx\bigg)\|A^{(1)}\|^2_{L^\infty(I)}\|z_x\|_0\|z_x\|_0\leq \eps\dfrac{\|z\|_1^2}{2}+\frac{C}{2\eps}\|z\|_1^2\qquad (\eps>0).
		\end{split}
	\end{equation}
	It remains to consider the last term in \eqref{stima00}; integrating by parts in space as before, we find the bound
	\begin{equation}\label{stima1}
		\begin{split}
			&\bigg|-b\bigg(\int_I A(q_{x})z_x\, dx\bigg)\,_{-1}\langle A(u_{x}^{\mathcal{II}}+s_x),z_{xt}\rangle_{1}\bigg|\\\leq& \|A\|_{L^\infty(I)}\bigg(\int_I|z_x|dx\bigg)\|A^{(1)}\|_{L^\infty(I)}\|u_{xx}^{\mathcal{II}}+s_{xx}\|_{0}\|z_t\|_0\leq \eps\dfrac{\|z_t\|_0^2}{2}+\frac{C}{2\eps}\|z\|_1^2 \qquad (\eps>0).
		\end{split}
	\end{equation}
	Finally, from \eqref{stima00} and \eqref{stima0} we have
	\begin{equation*}\label{stima06}
		\begin{split}
			\bigg|\int_s^t	\,_{-1}\langle		h(u^\mathcal{I})-h(u^\mathcal{II}),z_{xt}\big\rangle_{1} &d\tau\bigg|\leq\dfrac{1}{2}\bigg|\Big[\big(\big[b\big(\mathcal{L}_0-\mathcal{L}(u^\mathcal{I})\big)-c\,\xi_0\,\big] A^{(1)}(q_x)z_x\,,\,z_{x}\big)_0\Big]_s^t\bigg|\\
			&+\dfrac{1}{2}\int_s^t\Big|\,_{-1}\langle\big[b\big(\mathcal{L}_0-\mathcal{L}(u^\mathcal{I})\big)-c\,\xi_0\,\big] A^{(2)}(q_x)z^2_x\,,\,q_{xt}\rangle_{1}\Big|d\tau\\
			&+\dfrac{1}{2}\int_s^t\Big|\bigg(\int_I A(u_{x}^\mathcal{I}+s_x)u_{xt}^\mathcal{I}dx\bigg)\big(A^{(1)}(q_x)\,,\,z_x^2\big)_0\Big| d\tau\\&
			+	b\int_s^t\bigg|\bigg(\int_I A(q_x)z_x\, dx\bigg)\,_{-1}\langle A(u_{x}^\mathcal{II}+s_x),z_{xt}\rangle_{1}\bigg|d\tau.
		\end{split}
	\end{equation*}
	Since
	$$
	\bigg|\Big[\big(\big[b\big(\mathcal{L}_0-\mathcal{L}(u^\mathcal{I})\big)-c\,\xi_0\,\big] A^{(1)}(q_x)z_x\,,\,z_{x}\big)_0\Big]_s^t\bigg|\leq C\sup_{\tau\in[s,t]}\|z(\tau)\|_{1}^2,
	$$
	collecting the inequalities \eqref{stima3},  \eqref{stima5} and \eqref{stima1} we obtain for all $\eta\in(0,1/2)$ and $\eps>0$
	\begin{equation*}\label{stima6}
		\begin{split}
			\bigg|\int_s^t\!	\,_{-1}\langle		h(u^\mathcal{I})-h(u^\mathcal{II}),z_{xt}\rangle_{1} d\tau\bigg|&\leq\! \eps\int_s^t\bigg(\dfrac{\|z_t(\tau)\|_0^2}{2}+\dfrac{\|z(\tau)\|_1^2}{2}+\dfrac{\|z(\tau)\|_2^2}{2}\bigg)d\tau+C(\eps,T)\!\sup_{\tau\in[s,t]}\!\|z(\tau)\|_{2-\eta}^2,
		\end{split}
	\end{equation*}
	i.e., we have obtained the inequality \eqref{ts}.
\end{proof}

\end{document}